\providecommand{\tabularnewline}{\\}
  \theoremstyle{plain}
  \newtheorem*{thm*}{\protect\theoremname}
  \theoremstyle{remark}
  \newtheorem*{rem*}{\protect\remarkname}
  \theoremstyle{remark}
  \newtheorem*{note*}{\protect\notename}
  \theoremstyle{definition}
  \newtheorem*{example*}{\protect\examplename}
\theoremstyle{plain}
\newtheorem{thm}{\protect\theoremname}[section]
  \theoremstyle{definition}
  \newtheorem{defn}[thm]{\protect\definitionname}
  \theoremstyle{remark}
  \newtheorem*{notation*}{\protect\notationname}
  \theoremstyle{plain}
  \newtheorem{prop}[thm]{\protect\propositionname}
  \theoremstyle{plain}
  \newtheorem{lem}[thm]{\protect\lemmaname}
  \theoremstyle{plain}
  \newtheorem{conjecture}[thm]{\protect\conjecturename}
\newcommand{\xyR}[1]{
 \xydef@\xymatrixrowsep@{#1}}
\newcommand{\xyC}[1]{
  \xydef@\xymatrixcolsep@{#1}}
\date{}
    \DeclareFontFamily{U}{wncy}{}
    \DeclareFontShape{U}{wncy}{m}{n}{<->wncyr10}{}
    \DeclareSymbolFont{mcy}{U}{wncy}{m}{n}
    \DeclareMathSymbol{\Sh}{\mathord}{mcy}{"58} 
  \providecommand{\conjecturename}{Conjecture}
  \providecommand{\definitionname}{Definition}
  \providecommand{\examplename}{Example}
  \providecommand{\lemmaname}{Lemma}
  \providecommand{\notationname}{Notation}
  \providecommand{\notename}{Note}
  \providecommand{\propositionname}{Proposition}
  \providecommand{\remarkname}{Remark}
  \providecommand{\theoremname}{Theorem}
\providecommand{\theoremname}{Theorem}
\begin{document}
\selectlanguage{english}%
\global\long\def\zz{\mathbb{Z}}
\global\long\def\im{\operatorname{im}}
\global\long\def\re{\operatorname{re}}
\global\long\def\rr{\mathbb{R}}
\global\long\def\cc{\mathbb{C}}
\global\long\def\vv{\mathbb{V}}
\global\long\def\SL{\operatorname{SL}}
\global\long\def\ord{\operatorname{ord}}
\global\long\def\Spec{\operatorname{Spec}}
\global\long\def\qq{\mathbb{Q}}
\global\long\def\pp{\mathbb{P}}
\global\long\def\Hom{\operatorname{Hom}}
\global\long\def\id{\operatorname{id}}
\global\long\def\gcd{\operatorname{gcd}}
\global\long\def\lcm{\operatorname{lcm}}
\global\long\def\Sym{\operatorname{Sym}}
\global\long\def\ff{\mathbb{F}}
\global\long\def\nn{\mathbb{N}}
\global\long\def\hh{\mathbb{H}}
\global\long\def\Ann{\operatorname{Ann}}
\global\long\def\End{\operatorname{End}}
\global\long\def\sgn{\operatorname{sgn}}
 \global\long\def\rk{\operatorname{rk}}
\global\long\def\Br{\operatorname{Br}}
 \global\long\def\Hasse{\operatorname{Hasse}}
\global\long\def\GL{\operatorname{GL}}
\global\long\def\Coll{\operatorname{Col}}
 \global\long\def\Tr{\operatorname{Tr}}
\global\long\def\aa{\mathbb{A}}
\global\long\def\ld{\operatorname{in}_{<}}
\global\long\def\ob{\operatorname{Ob}}
\global\long\def\mor{\operatorname{mor}}
\global\long\def\ext{\operatorname{Ext}}
\global\long\def\tor{\operatorname{Tor}}
\global\long\def\cok{\operatorname{coker}}
\global\long\def\ilim{\varprojlim}
\global\long\def\Gal{\operatorname{Gal}}

\selectlanguage{british}%

\title{The Picard Group of Various Families of $(\zz/2\zz)^{4}$-invariant
Quartic K3 Surfaces}

\author{Florian Bouyer}
\maketitle
\begin{abstract}
The subject of this paper is the study of various families of quartic
K3 surfaces which are invariant under a certain $(\zz/2\zz)^{4}$
action. In particular, we describe families whose general member contains
$8,16,24$ or $32$ lines as well as the $320$ conics found by Eklund
\cite{eklund2010} (some of which degenerate into the mentioned lines).
The second half of this paper is dedicated to finding the Picard group
of a general member of each of these families, and describing it as
a lattice. It turns out that for each family the Picard group of a
very general surface is generated by the lines and conics lying on
said surface.
\end{abstract}

\section{Introduction}

Consider the $(\zz/2\zz)^{4}$ subgroup of $\mathrm{Aut}(\pp_{\overline{\qq}}^{3})$
generated by the four transformations
\[
[x:y:z:w]\mapsto[y:x:w:z],[z:w:x:y],[x:y:-z:-w],[x:-y:z:-w].
\]
In this paper we look at various families of quartic surfaces in $\pp_{\overline{\qq}}^{3}$
which are invariant under these transformations. The family of all
such quartics is known to be parameterised by $\pp^{4}$ and has been
studied extensively by \cite{BarthNieto,eklund2010}. Barth and Nieto,
\cite{BarthNieto}, studied the moduli space of invariant quartics
which contain a line, while Eklund \cite{eklund2010} looked at those
that contain a conic. It turns out that a very general invariant quartic
contains at least $320$ conics and Eklund uses that, among other
tools, to calculate the Picard group of a very general invariant quartic.
Barth and Nieto the locus of invariant quartics containing lines to
be a quintic threefold $N_{5}\subset\pp^{4}$, plus the tangent cones
of the $10$ singular points of $N_{5}$. Eklund studied the quartic
surfaces parameterised by $N_{5}$, so in this paper we look at the
surfaces parameterised by the cones. 

In particular we consider :
\begin{itemize}
\item a four dimensional family $\mathcal{X}$,
\item a three dimensional family $\mathcal{X}_{C,D,E}$,
\item a two dimensional family $\mathcal{X}_{C,D}$,
\item a one dimensional family $\mathcal{X}_{B}$,
\item a one dimensional family $\mathcal{X}_{C}$,
\item a specific quartic K3 surface $Y$,
\item and the Fermat quartic, $F_{4}$.
\end{itemize}
For each of these families, we look at the lines a very general member
contains. We use these and the $320$ conics that Eklund found to
calculate the Picard group of a very general member. Our main result
(Theorem \ref{thm:Main Result Final}) can be summarised as follows:
\begin{thm*}[Summarised Theorem \ref{thm:Main Result Final}]
~

\begin{itemize}
\item A very general member of $\mathcal{X}$ contains no lines, and has
Picard rank $16$,
\selectlanguage{english}%
\item A very general member of $\mathcal{X}_{C,D,E}$ contains exactly $8$
lines, and has Picard rank $17$,
\item A very general member of $\mathcal{X}_{C,D}$ contains exactly $16$
lines, and has Picard rank $18$,
\selectlanguage{british}%
\item A very general member of $\mathcal{X}_{B}$ contains exactly $24$
lines, and has Picard rank $19$,
\item A very general member of $\mathcal{X}_{C}$ contains exactly $32$
lines, and has rank $19$,
\item The surface $Y$ contains exactly $32$ lines, and has Picard rank
$20$,
\item The Fermat quartic, $F_{4}$, contains exactly $48$ lines, and has
Picard rank $20$.
\end{itemize}
Possibly except for the surface $Y$, the Picard group is generated
by the lines and conics lying on the surface. In all cases, we decompose
the Picard group into known lattices.
\end{thm*}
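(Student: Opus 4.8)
The plan is to bound the geometric Picard rank of a very general member of each family both from below and from above, and then to identify the resulting lattice. For the lower bound I would take the explicit lines produced in the earlier sections together with Eklund's $320$ conics, regard their classes in $\operatorname{NS}(X_{\overline{\qq}})$, and compute the Gram matrix of the intersection pairing. The entries are routine: on a K3 a line and a smooth conic both have self-intersection $-2$ by adjunction, two distinct lines meet in $0$ or $1$ point, and a line and a conic in $0,1$ or $2$ points, all read off from the explicit equations. Since $\operatorname{NS}(X_{\overline{\qq}})$ has signature $(1,\rho-1)$, the rank over $\qq$ of this Gram matrix is a lower bound for $\rho$, and a short linear-algebra computation for each family should deliver the claimed values $16,17,18,19,19,20$. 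For the two isolated surfaces $Y$ and $F_{4}$ the upper bound $\rho\le 20$ is automatic for a complex K3, so here only the lower bound via curves, together with a check that the rank is exactly $20$, is needed.

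For the families I would obtain the upper bound by reduction modulo a prime $p$ of good reduction. Specialising the very general member at such a $p$, I would count points over $\ff_{p^{n}}$ for enough $n$ to recover the characteristic polynomial of Frobenius on $H^{2}_{\mathrm{et}}$, a degree-$22$ polynomial all of whose roots have absolute value $p$. By the Tate conjecture for K3 surfaces the geometric Picard rank of the reduction equals the number of eigenvalues $\alpha$ with $\alpha/p$ a root of unity, and since Picard rank can only grow under specialisation this bounds $\rho(X_{\overline{\qq}})$ from above. In the even-rank cases ($16,18$) a single well-chosen prime should already force the upper bound to meet the lower bound supplied by the curves.

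The genuinely delicate cases are the odd ranks ($17$ and $19$), where Frobenius at one prime typically produces an even upper bound one larger than the truth. Here I would invoke the van Luijk argument in the refined form of Elsenhans and Jahnel: reduce at two primes $p_{1},p_{2}$ whose reductions both attain the even upper bound $r$, and compare the square classes of $\operatorname{disc}\operatorname{NS}$ of the two reductions. If $\rho(X_{\overline{\qq}})$ were equal to $r$, then $\operatorname{NS}(X_{\overline{\qq}})$ would inject with finite index into each reduction lattice, forcing the two discriminants into the \emph{same} class modulo squares. Exhibiting two primes realising \emph{distinct} square classes therefore rules out $\rho=r$ and pins $\rho$ to the odd value matched by the explicit curves. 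This is where the main obstacle lies: there is no a priori guarantee that accessible primes realise different square classes, so this step may require a genuine search and is the part of the argument I expect to be hardest to push through cleanly.

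Finally, to show the Picard group is generated by the lines and conics, I would prove that the sublattice $L$ they span is primitive in $\operatorname{NS}$. Using $\operatorname{disc}(L)=[\operatorname{NS}:L]^{2}\,\operatorname{disc}(\operatorname{NS})$, any nontrivial index is forced to divide the square part of $\operatorname{disc}(L)$; controlling this via the embedding into the K3 lattice $U^{3}\oplus E_{8}(-1)^{2}$ and Nikulin's theory of discriminant forms should rule out a proper overlattice, giving $L=\operatorname{NS}$. The same discriminant-form data then determines the genus of $\operatorname{NS}$ and, in the cases where the genus contains a single class, yields the explicit decomposition into standard lattices. The phrase ``possibly except for $Y$'' flags precisely where this primitivity check may fail, namely when the square part of $\operatorname{disc}(L)$ is large enough to permit an overlattice that the curves do not visibly generate.
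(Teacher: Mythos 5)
Your first three steps track the paper closely: lower bounds from the Gram matrix of lines and conics, upper bounds via Frobenius at a good prime, and the two-prime discriminant comparison (Artin--Tate) for the odd-rank case. One mild divergence: for the rank-$19$ families $\mathcal{X}_{B}$ and $\mathcal{X}_{C}$ the paper does \emph{not} use the two-prime square-class trick; it instead exploits that these are one-dimensional families containing the Fermat quartic, that there are only finitely many singular K3 surfaces up to $\overline{\qq}$-isomorphism, and that $F_{4}$ is supersingular at primes $p\equiv3\bmod4$ while another member of the family is not --- so the family cannot consist entirely of rank-$20$ surfaces. Your Elsenhans--Jahnel route for these cases is viable in principle, but it carries the search risk you yourself flag, which the paper's structural argument avoids.

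The genuine gap is in your last step, the claim that lines and conics generate $\mathrm{Pic}$. You propose to rule out a proper overlattice of the lattice $L$ spanned by the curves purely by discriminant-form and embedding considerations (evenness, $\mathrm{disc}(L)=[\mathrm{NS}:L]^{2}\mathrm{disc}(\mathrm{NS})$, Nikulin). This cannot work, because the lattices in question \emph{do} admit nontrivial even overlattices: the paper's own ``Method'' section exhibits, for the family $\mathcal{X}_{C}$, the isotropic element $\tfrac{1}{2}(e_{4}+e_{5}+e_{10}+e_{11}+e_{13}+e_{14})$ of order two with discriminant form zero, producing an index-two even overlattice isomorphic to $E_{8}\left\langle -1\right\rangle \oplus U\oplus D_{8}\left\langle -1\right\rangle \oplus\left\langle -8\right\rangle$ (and the discriminant form data for $M$ likewise shows order-two classes with $q=0$). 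Such an overlattice is even, has the right signature, and is not excluded by any embedding criterion into $U^{3}\oplus E_{8}\left\langle -1\right\rangle^{\oplus2}$, so abstract lattice theory leaves open whether $\mathrm{NS}$ is $L$ or this overlattice. The paper closes this gap with geometric input that your proposal is missing: the classical fact that $\mathrm{Pic}(F_{4})$ is generated by the $48$ lines (discriminant $-64$), a choice of $16$ conics and $4$ lines on $F_{4}$ realising this group, and the specialisation chain $\mathcal{X}\supset\mathcal{X}_{C,D,E}\supset\mathcal{X}_{C,D}\supset\mathcal{X}_{C}\ni F_{4}$. Each family's curve lattice is exhibited as a minor of $M_{F_{4}}$, hence a \emph{primitive} sublattice of $\mathrm{Pic}(F_{4})$; since $\mathrm{Pic}$ of the very general member injects into $\mathrm{Pic}(F_{4})$ under specialisation, and no overlattice of a primitive sublattice can again sit inside the same ambient lattice, the curve lattice must already be the full Picard group. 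Without some argument of this kind (or another geometric substitute), your primitivity step fails not only ``possibly for $Y$'' but for every family.
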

\begin{rem*}
The result about a very general member of $\mathcal{X}$ having Picard
rank $16$, with the Picard group generated by the conics, was already
proven by Eklund \cite[Thm 3.5, Cor 7.4]{eklund2010} but in this
paper we prove this using a different method.

The fact that the Fermat quartic has $48$ lines, which generate the
Picard group of rank $20$ is a classical result. We will use that
result in our proof of Theorem \ref{thm:Main Result Final}.
\end{rem*}
We note that Theorem \ref{thm:Main Result Final} fits nicely with
the fact that certain moduli spaces of K3 surfaces whose Picard group
contains a fixed lattice $M$ have dimension $20-\mathrm{rank}(M)$.
I.e., in each of the above, a Picard group of rank $r$, fits nicely
with a $20-r$ dimensional family.

In Section \ref{sec:Lattices} we review the notations and results
we need for lattices. In Section \ref{sec:The-Families-and-Lines}
we start by introducing the notations we will use and review the known
results. We finish the section by introducing the families containing
lines, that we will study in the rest of the paper.

In Section \ref{sec:The-Picard-Group} we look at the Picard group
of the families. First by calculating the Picard number, then by proving
that in each case the Picard group is generated by the conics and
lines. We finish by putting everything together and using the results
from Section \ref{sec:Lattices} to prove Theorem \ref{thm:Main Result Final}.
\begin{note*}
Throughout this paper, most calculations, point counting and lattice
manipulation were done using the computer algebra package Magma \cite[V2.21-4 ]{Magma}.
\end{note*}

\section{Lattices\label{sec:Lattices}}

In this paper a \emph{lattice}, $L$,\emph{ }is a free abelian group
of finite rank equipped with a symmetric, non-degenerate, bilinear
form $\left\langle \,,\,\right\rangle :L\times L\to\zz$. We say it
has \emph{signature} $\left(b_{+},b_{-}\right)$ if for some basis
$\{e_{i}\}$ of $L\otimes_{\mathbb{Z}}\mathbb{R}$ we have 
\[
\left\langle e_{i},e_{j}\right\rangle =\begin{cases}
1 & i=j,i\in\{1,\dots,b_{+}\}\\
-1 & i=j,i\in\{b_{+}+1,\dots,b_{+}+b_{-}\}\\
0 & i\neq j
\end{cases}.
\]
A lattice is \emph{positive definite} if it has signature $(b_{+},0)$,
\emph{negative definite} if it has signature $(0,b_{-})$, and \emph{indefinite}
otherwise. A lattice, $L$, is \emph{even} if $\left\langle x,x\right\rangle \in2\zz$
for all $x\in L$. Let $\{e_{i}\}$ be a basis for $L$, then a \emph{Gram
matrix} of $L$ (with respect to $\left\{ e_{i}\right\} $) is the
matrix $\left(\left\langle e_{i},e_{j}\right\rangle \right)_{i,j}$.
The \emph{discriminant} of $L$, denoted $\mathrm{Disc}(L)$, is the
determinant of a Gram matrix, which is invariant under change of basis.
A lattice is \emph{unimodular }if it has discriminant $\pm1$. 
\begin{example*}
Consider the following Dynkin diagrams:
\begin{eqnarray*}
A_{n} & := & \xyR{0.1pc}\xyC{0.5pc}\xymatrix{*+[o][F]{\,}\ar@{-}[r] & *+[o][F]{\,}\ar@{.}[r] & *+[o][F]{\,}\ar@{-}[r] & *+[o][F]{\,} & ,\\
e_{1} & e_{2} & e_{n-1} & e_{n}
}
\\
D_{n} & := & \xyR{0.1pc}\xyC{0.5pc}\xymatrix{\\
 &  &  &  & *+[o][F]{\,}\\
*+[o][F]{\,}\ar@{-}[r] & *+[o][F]{\,}\ar@{.}[r] & *+[o][F]{\,}\ar@{-}[r] & *+[o][F]{\,}\ar@{-}[ru]\ar@{-}[rd] & e_{n-1} & ,\\
e_{1} & e_{2} & e_{n-3} & e_{n-2} & *+[o][F]{\,}\\
 &  &  &  & e_{n}
}
\\
E_{8} & := & \xyR{0.1pc}\xyC{0.5pc}\xymatrix{ &  &  &  & e_{8}\\
 &  &  &  & *+[o][F]{\,}\\
\\
*+[o][F]{\,}\ar@{-}[r] & *+[o][F]{\,}\ar@{-}[r] & *+[o][F]{\,}\ar@{-}[r] & *+[o][F]{\,}\ar@{-}[r] & *+[o][F]{\,}\ar@{-}[r]\ar@{-}[uu] & *+[o][F]{\,}\ar@{-}[r] & *+[o][F]{\,} & .\\
e_{1} & e_{2} & e_{3} & e_{4} & e_{5} & e_{6} & e_{7}
}
\end{eqnarray*}
Each diagram defines a (root) lattice, with basis $\left\{ e_{i}\right\} $
and bilinear form 
\[
\left\langle e_{i},e_{j}\right\rangle =\begin{cases}
2 & i=j\\
-1 & \xyR{0.1pc}\xyC{0.5pc}\xymatrix{*+[o][F]{\,}\ar@{-}[r] & *+[o][F]{\,}\\
e_{i} & e_{j}
}
\\
0 & \mathrm{otherwise}
\end{cases}.
\]

Another example of a lattice is the \emph{hyperbolic plane lattice},
denoted $U$, which is the unique (up to isomorphism) rank $2$ even
indefinite unimodular lattice. For some basis, its Gram matrix is
\[
\begin{pmatrix}0 & 1\\
1 & 0
\end{pmatrix}.
\]
\end{example*}
Given a lattice $L$ with basis $\{e_{i}\}$ and $m\in\zz$, we denote
by $L\left\langle m\right\rangle $ the lattice with basis $\{e_{i}\}$
and bilinear form $\left\langle e_{i},e_{j}\right\rangle _{L\left\langle m\right\rangle }=m\left\langle e_{i},e_{j}\right\rangle _{L}$.
By abuse of notation, we denote the rank one lattice with bilinear
form $\left\langle e_{1},e_{1}\right\rangle =m$ by $\left\langle m\right\rangle $.
If $L_{1}$ and $L_{2}$ are two lattices with basis $\{e_{i}\}$,
$\{f_{i}\}$ respectively, we denote by $L_{1}\oplus L_{2}$ the lattice
with basis $\{e_{i}\}\sqcup\{f_{i}\}$ and bilinear form given by
$\left\langle e_{i},f_{j}\right\rangle =0$. We will say that a lattice
$L$ decomposes into $L_{1},\dots,L_{n}$ if $L\cong L_{1}\oplus\cdots\oplus L_{n}$.

We say a lattice $L_{1}$ is a \emph{sublattice }of a lattice $L_{2}$
if it is a subset of $L_{2}$ and if the bilinear form of $L_{2}$
restricted on $L_{1}$ agrees with the bilinear form of $L_{1}$.
A sublattice is said to be \emph{primitive }if $L_{2}/L_{1}$ is torsion
free. If $L_{1}$ is a full-rank sublattice of $L_{2}$, i.e. $\mathrm{rank}(L_{1})=\mathrm{rank}(L_{2})$,
then we call $L_{2}$ an \emph{overlattice} of $L_{1}$. Note that
in such case $\mathrm{Disc}(L_{1})/\mathrm{Disc}(L_{2})=[L_{2}:L_{1}]^{2}$. 

In Section 4 we try to find a decomposition of lattices into $A_{n}\left\langle m\right\rangle ,D_{n}\left\langle m\right\rangle ,E_{8}\left\langle m\right\rangle $
and $U\left\langle m\right\rangle $, to do so we will need some extra
invariants. We may extend the bilinear form $\left\langle \,,\,\right\rangle $
on $L$ $\qq$-linearly to $L\otimes\qq$ and define the \emph{dual
lattice }(which is often not a lattice with respect to our definition):
\[
L^{*}:=\Hom(L,\zz)\cong\left\{ x\in L\otimes\qq:\left\langle x,y\right\rangle \in\zz\,\forall y\in L\right\} .
\]

\begin{defn}
The \emph{discriminant group} of a lattice $L$ is the finite abelian
group $A_{L}:=L^{*}/L$. We denote by $\ell(A_{L})$ the minimal number
of generators of $A_{L}$. 

The discriminant group comes with a bilinear form, $b_{L}:A_{L}\times A_{L}\to\qq/\zz$
defined by $b_{L}(x+L,y+L)\mapsto\left\langle x,y\right\rangle _{L^{*}}\mod\zz$

For even lattices, we define the \emph{discriminant form}, $q_{L}:A_{L}\to\mathbb{Q}/2\mathbb{Z}$
by $x+L\mapsto\left\langle x,x\right\rangle _{L^{*}}\mod2\mathbb{Z}$.
\end{defn}
The following theorem of Nikulin will help identify the lattices we
will find:
\begin{thm}[{Nikulin \cite[Cor. 1.13.3]{nikulin1980integral}}]
\label{thm:Nikulin-Iso}If a lattice $L$ is even, indefinite with
$\mathrm{rank}(L)>\ell(A_{L})+2$, then $L$ is determined up to isometry
by its rank, signature and discriminant form.
\end{thm}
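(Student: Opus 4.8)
The plan is to prove this along the classical genus-theoretic route, splitting the statement into a local--global part and a strong-approximation part. First I would reduce the claim to two assertions: (i) the \emph{genus} of $L$ --- that is, the collection of localisations $L\otimes\zz_p$ over all primes $p$ together with $L\otimes\rr$ --- is determined by the rank, signature and discriminant form $q_L$; and (ii) under the stated hypotheses the genus of $L$ contains a single isometry class. Granting both, any two even indefinite lattices sharing the same rank, signature and $q_L$ lie in a common genus by (i) and are therefore isometric by (ii), which is precisely the conclusion.

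For (i), the real place contributes only the signature by Sylvester's law of inertia, so the content lies at the finite places. Here I would invoke the structure theory of $p$-adic lattices: each $L\otimes\zz_p$ admits a Jordan splitting into $p^{k}$-scaled unimodular blocks, and I would show that the discriminant \emph{quadratic} form $q_L$ decomposes as an orthogonal sum of $p$-primary components $q_{L,p}$ on the $p$-parts of $A_L$, each recording exactly the non-unimodular part of the Jordan splitting at $p$. For odd $p$ this is a short computation. The delicate case is $p=2$, where the bilinear form alone is insufficient and one must track the quadratic refinement together with the parity (oddity) data encoded by the signature modulo $8$; this is where Milgram's formula $\sum_{x\in A_L} e^{\pi i\, q_L(x)} = \sqrt{\lvert A_L\rvert}\, e^{\pi i (b_+-b_-)/4}$ ties the global signature to the local $2$-adic invariant and lets one recover $L\otimes\zz_2$ from $(b_+,b_-)$ and $q_{L,2}$.

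For (ii), the key input is the Eichler--Kneser strong approximation theorem for spin groups, which guarantees that an indefinite lattice of rank at least $3$ has exactly one isometry class in each \emph{spinor} genus. It then remains to show that the rank hypothesis forces the genus to comprise a single spinor genus. The number of proper spinor genera is a quotient of an idele class group by the product of the local spinor-norm groups $\theta\bigl(O^{+}(L\otimes\zz_p)\bigr)$, so it suffices to prove that $\theta\bigl(O^{+}(L\otimes\zz_p)\bigr)\supseteq\zz_p^{\times}$ for every $p$ (the archimedean factor being trivial since $L$ is indefinite). This is exactly where $\mathrm{rank}(L)>\ell(A_L)+2$ enters: since $A_L$ is generated by $\ell(A_L)$ elements, the non-unimodular Jordan blocks at each $p$ account for at most $\ell(A_L)$ basis vectors, leaving a unimodular block of rank at least $\mathrm{rank}(L)-\ell(A_L)\geq 3$, and a unimodular $\zz_p$-lattice of rank $\geq 3$ has full spinor norm $\zz_p^{\times}$, supplying the required surjectivity.

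I expect the strong-approximation step (ii) to be the main obstacle, both because it rests on the deep arithmetic of spin groups over number fields and because the bookkeeping relating $\ell(A_L)$ to the ranks of the unimodular Jordan components must be made uniform across all primes, including the awkward prime $2$. By contrast, the local--global identification in (i) is essentially a finite computation once the Jordan decompositions and the $p=2$ quadratic-form conventions are set up with care.
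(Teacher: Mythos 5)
This theorem is not proved in the paper at all: it is imported verbatim from Nikulin \cite[Cor.~1.13.3]{nikulin1980integral}, so there is no internal argument to compare against. Your sketch --- (i) signature plus discriminant form determine the genus via Jordan splittings and the Milgram/oddity bookkeeping at $p=2$, and (ii) Eichler--Kneser strong approximation together with the observation that $\mathrm{rank}(L)\geq\ell(A_{L})+3$ forces a unimodular $p$-adic Jordan block of rank $\geq 3$, hence full local spinor norms and a single spinor genus --- is exactly the classical Nikulin/Kneser proof of the cited result, and it is correct in outline.
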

With that theorem in mind, we write down in Table \ref{tab:Lattices info}
a summary of the rank, signature and discriminant form for the lattices
$U,\,E_{8},\,A_{n}\left\langle m\right\rangle ,\,D_{n}\left\langle m\right\rangle $
and $\left\langle 2m\right\rangle $. 

\begin{center}
{\small{}}
\begin{table}[H]
\begin{centering}
{\small{}}%
\begin{tabular}{c|c|c|c|c|c|c|}
 & {\small{}$U$} & {\small{}$E_{8}$} & {\small{}$A_{n}\left\langle m\right\rangle $} & {\small{}$D_{2n}\left\langle m\right\rangle $} & {\small{}$D_{2n+1}\left\langle m\right\rangle $} & {\small{}$\left\langle 2m\right\rangle $}\tabularnewline
\hline 
{\small{}Rank} & {\small{}$2$} & {\small{}$8$} & {\small{}$n$} & {\small{}$2n$} & {\small{}$2n+1$} & {\small{}$1$}\tabularnewline
\hline 
{\small{}Sgn} & {\small{}$(1,1)$} & {\small{}$(8,0)$} & $\begin{cases}
(n,0) & m>0\\
(0,n) & m<0
\end{cases}$ & {\small{}$\begin{cases}
(2n,0) & m>0\\
(0,2n) & m<0
\end{cases}$} & {\small{}$\begin{cases}
(2n+1,0) & m>0\\
(0,2n+1) & m<0
\end{cases}$} & {\small{}$\begin{cases}
(1,0) & m>0\\
(0,1) & m<0
\end{cases}$}\tabularnewline
\hline 
{\small{}Disc} & {\small{}$1$} & {\small{}$1$} & {\small{}$(n+1)\cdot m^{n}$} & {\small{}$4\cdot m^{2n}$} & {\small{}$4\cdot m^{2n+1}$} & {\small{}$2m$}\tabularnewline
\hline 
$A_{L}$ & $-$ & $-$ & $C_{(n+1)m}\times C_{m}^{n-1}$ & $C_{2m}^{2}\times C_{m}^{2n-2}$ & $C_{4m}\times C_{m}^{2n}$ & $C_{2m}$\tabularnewline
\hline 
$q_{L}$ & $-$ & $-$ & $\left\{ \frac{n}{(n+1)m},\frac{2}{m},\frac{n(n-1)}{m}\right\} $ & $\left\{ \frac{2}{2m},\frac{n}{2m},\frac{2}{m}\right\} $ & $\left\{ \frac{2n+1}{4m},\frac{2}{m},\frac{2n}{m}\right\} $ & $\left\{ \frac{1}{2m}\right\} $\tabularnewline
\hline 
\end{tabular}{\small{}\caption{Invariants of Lattices\label{tab:Lattices info}}
}
\par\end{centering}{\small \par}
\end{table}
\par\end{center}{\small \par}

\noindent The row $q_{L}$ lists the values of $q_{L}(x_{i})$ where
$x_{i}$ are chosen generators of $A_{L}$, i.e., $A_{L}=\left\langle x_{1}\right\rangle \times\dots\times\left\langle x_{\ell(A)}\right\rangle $.
Therefore it only encodes partial information of the discriminant
form and not the whole of it, but it encodes enough to rule out (in
most cases) whether a summand occurs. As $U$ and $E_{8}$ have trivial
discriminant group, we use following theorem of Nikulin to identify
copies of $U$ and $E_{8}$ sitting inside a given lattice.
\begin{thm}[{Nikulin \cite[Cor 1.13.15]{nikulin1980integral}}]
\label{thm:Nikulin-E8-U}Let $L$ be an even lattice of signature
$(b_{+},b_{-})$.

\begin{itemize}
\item If $b_{+}\geq1,b_{-}\geq1$ and $b_{+}+b_{-}\geq3+\ell(A_{L})$ then
$L\cong U\oplus T$ for some $T$.
\item If $b_{+}\geq1,b_{-}\geq8$ and $b_{+}+b_{-}\geq9+\ell(A_{L})$ then
$L\cong E_{8}\left\langle -1\right\rangle \oplus T$ for some $T$.
\end{itemize}
\end{thm}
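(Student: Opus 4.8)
The plan is to deduce both statements from the uniqueness result (Theorem \ref{thm:Nikulin-Iso}) together with Nikulin's existence theorem for even lattices with prescribed invariants \cite[Thm 1.10.1]{nikulin1980integral}. The unifying observation is that $U$ and $E_{8}\langle-1\rangle$ are both even and unimodular, so adjoining either of them to a lattice does not change the discriminant form: writing $N$ for $U$ (resp. $E_{8}\langle-1\rangle$) and letting $T$ be any even lattice, one has $A_{N\oplus T}\cong A_{T}$ and $q_{N\oplus T}=q_{T}$. Hence, to exhibit $L\cong N\oplus T$ it will suffice to produce an even lattice $T$ whose signature is $(b_{+},b_{-})$ minus the signature of $N$ (componentwise) and whose discriminant form equals $q_{L}$; once such a $T$ exists, $N\oplus T$ and $L$ will share rank, signature and discriminant form, and I can invoke uniqueness to conclude that they are isometric.

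First I would set up the bookkeeping. Write $n=b_{+}+b_{-}=\mathrm{rank}(L)$ and $\ell=\ell(A_{L})$. In the first case $N=U$ has signature $(1,1)$, so I seek $T$ of signature $(b_{+}-1,b_{-}-1)$; in the second case $N=E_{8}\langle-1\rangle$ has signature $(0,8)$, so I seek $T$ of signature $(b_{+},b_{-}-8)$. The hypotheses $b_{+},b_{-}\geq1$ (resp. $b_{-}\geq8$) guarantee that each coordinate of the target signature is non-negative. For the congruence condition I would use the fact that $\mathrm{sign}(q_{L})\equiv b_{+}-b_{-}\pmod 8$ (Milgram's formula); since $U$ and $E_{8}\langle-1\rangle$ both have signature $\equiv0\pmod 8$, the sought $T$ automatically satisfies $t_{+}-t_{-}\equiv\mathrm{sign}(q_{T})\pmod 8$ with $q_{T}=q_{L}$ in both cases.

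The existence of $T$ is the crux. By \cite[Thm 1.10.1]{nikulin1980integral}, an even lattice with signature $(t_{+},t_{-})$ and discriminant form $q_{L}$ exists provided $t_{\pm}\geq0$, the congruence above holds, $t_{+}+t_{-}\geq\ell$, and certain $p$-adic local conditions are satisfied; the latter become vacuous as soon as the rank \emph{strictly} exceeds $\ell(A_{L,p})$ for every $p$, hence as soon as the rank strictly exceeds $\ell$. Here $\mathrm{rank}(T)=n-2$ in the first case and $n-8$ in the second, so the hypotheses $n\geq3+\ell$ and $n\geq9+\ell$ are exactly what force $\mathrm{rank}(T)\geq\ell+1>\ell$, removing the boundary obstructions and yielding the required $T$. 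Finally, $N\oplus T$ is even and indefinite (its signature equals that of $L$, which has $b_{+},b_{-}\geq1$), has rank $n>\ell+2$, and discriminant form $q_{L}$; applying Theorem \ref{thm:Nikulin-Iso} to both $L$ and $N\oplus T$ gives $L\cong N\oplus T$.

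The main obstacle I anticipate is precisely the verification that the local conditions of the existence theorem drop out once $\mathrm{rank}(T)>\ell$, and keeping straight why two different thresholds appear: existence of $T$ needs only $\mathrm{rank}(T)>\ell$, whereas the concluding uniqueness step needs $\mathrm{rank}(L)>\ell+2$. Reconciling these is what pins down the constants $3$ and $9$ (that is, $2+1$ and $8+1$) in the hypotheses, and I would take care that the relevant margin is genuinely strict in every case rather than merely an equality, since the boundary case $\mathrm{rank}(T)=\ell$ is exactly where the existence statement can fail.
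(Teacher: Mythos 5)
The paper does not prove this statement at all: it is quoted verbatim from Nikulin \cite[Cor 1.13.15]{nikulin1980integral} and used as a black box, so there is no internal argument to compare against. Judged on its own merits, your reconstruction is correct and is essentially the standard derivation (indeed, it mirrors how these corollaries are obtained in Nikulin's paper): since $U$ and $E_{8}\left\langle -1\right\rangle$ are even and unimodular, $q_{N\oplus T}=q_{T}$, so it suffices to manufacture an even lattice $T$ with discriminant form $q_{L}$ and signature $(b_{+}-1,b_{-}-1)$, resp.\ $(b_{+},b_{-}-8)$, via the existence theorem \cite[Thm 1.10.1]{nikulin1980integral}, and then identify $L$ with $N\oplus T$ by the uniqueness statement (Theorem \ref{thm:Nikulin-Iso}). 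Your bookkeeping is right at every point that matters: Milgram's formula gives the signature congruence for $T$ because both $U$ and $E_{8}\left\langle -1\right\rangle$ have signature divisible by $8$; the hypotheses $b_{+}+b_{-}\geq3+\ell(A_{L})$ and $b_{+}+b_{-}\geq9+\ell(A_{L})$ give $\mathrm{rank}(T)\geq\ell(A_{L})+1$, which kills the local conditions in the existence theorem since $\ell(A_{L,p})\leq\ell(A_{L})$ for every prime $p$; and $L$ itself is indefinite of rank $>\ell(A_{L})+2$, so the uniqueness theorem applies to match $L$ with $N\oplus T$. Your closing remark correctly identifies that in the $U$ case the existence and uniqueness thresholds coincide (both demand rank $\geq\ell+3$), while in the $E_{8}$ case the binding constraint is existence of $T$, uniqueness being the weaker requirement there.
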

We note that we can not always have a decomposition of lattices into
$A_{n}\left\langle m\right\rangle ,D_{n}\left\langle m\right\rangle ,E_{8}\left\langle m\right\rangle $
and $U\left\langle m\right\rangle $. When this happens, we express
our lattices as full rank sublattices of a lattice composed of $A_{n}\left\langle m\right\rangle ,D_{n}\left\langle m\right\rangle ,E_{8}\left\langle m\right\rangle $
and $U\left\langle m\right\rangle $. For this we will use:
\begin{thm}
\emph{\cite[Prop 1.4.1]{nikulin1980integral}}\label{thm:Niikulin-overlattice}
Let $L$ be an even lattice. Then there is a natural bijection between
isotropic subgroups $G$ of $A_{L}$ (subgroups on which the discriminant
form $q_{L}$ satisfies $q_{L}(g)=0$ for all $g\in G$) and overlattices
$L_{G}$ of $L$. 

Furthermore, the discriminant form $q_{L_{G}}$ is given by the discriminant
form $q_{L}$ restricted to $G^{\perp}/G$, where orthogonality is
with respect to $b_{L}$. 
\end{thm}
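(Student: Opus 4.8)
The plan is to set up an explicit, order-preserving dictionary between subgroups of $A_{L}$ and the intermediate lattices $L\subseteq M\subseteq L^{*}$, and then cut it down to the even part. The starting observation is that any \emph{even} overlattice $M$ is automatically sandwiched as $L\subseteq M\subseteq M^{*}\subseteq L^{*}$: the inclusion $M\subseteq M^{*}$ is just integrality of $M$, while $M^{*}\subseteq L^{*}$ follows from $L\subseteq M$ by the inclusion-reversing behaviour of $(-)^{*}$. Hence $M/L$ is a well-defined subgroup of $L^{*}/L=A_{L}$. Conversely, writing $\pi\colon L^{*}\twoheadrightarrow A_{L}$ for the quotient map, I would send a subgroup $G\subseteq A_{L}$ to $L_{G}:=\pi^{-1}(G)$, a full-rank lattice with $L\subseteq L_{G}\subseteq L^{*}$ and $L_{G}/L=G$. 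That $G\mapsto L_{G}$ and $M\mapsto M/L$ are mutually inverse is then a formal consequence of the correspondence theorem for the abelian group $L^{*}/L$, and naturality (equivariance under isometries of $L$) is immediate since every ingredient is built functorially from the form.

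The heart of the argument is to identify which $L_{G}$ are even lattices, and this is where the discriminant form enters. For $x\in L_{G}$ write $x=\ell+v$ with $\ell\in L$ and $v\in L^{*}$ a fixed representative of $g=\pi(x)\in G$; then
\[
\langle x,x\rangle=\langle v,v\rangle+2\langle v,\ell\rangle+\langle\ell,\ell\rangle,
\]
where the last two terms already lie in $2\zz$ because $L$ is even and $\langle v,\ell\rangle\in\zz$. Thus $\langle x,x\rangle\in2\zz$ for all $x\in L_{G}$ precisely when $\langle v,v\rangle\in2\zz$, i.e. exactly when $q_{L}(g)=0$ for every $g\in G$; the same bilinear computation shows integrality of $L_{G}$ is equivalent to $b_{L}$-isotropy of $G$. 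Taking representatives $v,v'$ of $g,g'\in G$, the relation $q_{L}(g+g')-q_{L}(g)-q_{L}(g')=2\langle v,v'\rangle$ shows that $q_{L}$-isotropy forces $\langle v,v'\rangle\in\zz$, hence $b_{L}(g,g')=0$; so evenness is the genuinely stronger condition and singles out precisely the $q_{L}$-isotropic subgroups, as claimed.

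For the second assertion I would compute $A_{L_{G}}=L_{G}^{*}/L_{G}$ directly. Since $L\subseteq L_{G}$ one has $L_{G}^{*}\subseteq L^{*}$, and $x\in L^{*}$ lies in $L_{G}^{*}$ iff $\langle x,y\rangle\in\zz$ for all $y\in L_{G}$, which translates into $b_{L}(\pi(x),g)=0$ for all $g\in G$; hence $\pi(L_{G}^{*})=G^{\perp}$ and so $L_{G}^{*}/L=G^{\perp}$. Quotienting by $L_{G}/L=G$ gives the identification $A_{L_{G}}\cong G^{\perp}/G$, and the form $q_{L}$ descends to this quotient because it vanishes on $G$ and $b_{L}(G^{\perp},G)=0$, yielding $q_{L_{G}}=q_{L}|_{G^{\perp}/G}$. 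The main obstacle I anticipate is not any single step but the bookkeeping needed to keep the three layers $L\subseteq L_{G}\subseteq L^{*}$ and their duals straight while passing back and forth between elements of $L^{*}$ and their classes in $A_{L}$; in particular one must first check that $q_{L}$ is genuinely well defined on cosets, so that the isotropy condition is independent of the chosen representative $v$, which is exactly the content of the displayed decomposition above and should be verified before anything else.
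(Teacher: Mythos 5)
Your proposal is correct and complete. Note, however, that the paper offers no proof of this statement at all: it is imported verbatim as a citation to Nikulin's Proposition 1.4.1, so there is no internal argument to compare against. What you have written is essentially the standard proof of Nikulin's proposition: the correspondence-theorem dictionary $G\mapsto\pi^{-1}(G)$, $M\mapsto M/L$ between subgroups of $A_{L}$ and intermediate groups $L\subseteq M\subseteq L^{*}$; the computation $\langle x,x\rangle=\langle v,v\rangle+2\langle v,\ell\rangle+\langle\ell,\ell\rangle$ identifying evenness of $L_{G}$ with $q_{L}$-isotropy of $G$ (and, en route, well-definedness of $q_{L}$ on cosets); and the identification $\pi(L_{G}^{*})=G^{\perp}$ giving $A_{L_{G}}\cong G^{\perp}/G$ with the restricted form. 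One genuinely useful point you add is the clarification that the paper's phrase ``overlattices'' must be read as \emph{even} overlattices (as in Nikulin): as your bilinear computation shows, merely integral overlattices correspond to the weaker condition of $b_{L}$-isotropy, and the bijection with $q_{L}$-isotropic subgroups holds only in the even category. Since the paper applies the theorem to even lattices (Picard groups of K3 surfaces, e.g.\ to find the index-two overlattice of $M_{C}$), this reading is the intended one, and your proof supplies exactly the justification the paper delegates to the reference.
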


\section{The Families and Lines\label{sec:The-Families-and-Lines}}

We will be studying the variety $\mathcal{X}\subset\pp_{[x,y,z,w]}^{3}\times\pp_{[A,B,C,D,E]}^{4}$
defined by the following equation over $\overline{\qq}$
\[
A(x^{4}+y^{4}+z^{4}+w^{4})+Bxyzw+C(x^{2}y^{2}+z^{2}w^{2})+D(x^{2}z^{2}+y^{2}w^{2})+E(x^{2}w^{2}+y^{2}z^{2})=0.
\]
We view $\mathcal{X}$ as a family of quartic surfaces over $\pp^{3}$
parametrised by points $[A,B,C,D,E]$ in $\pp^{4}$.
\begin{notation*}
We will use $X_{p}$ and $[A,B,C,D,E]$ to denote the quartic surface
parametrised by the point $p=[A,B,C,D,E]\in\pp^{4}$. 
\end{notation*}
\begin{note*}
If $X_{p}$ is a smooth quartic surface, then it is a K3 surface.
\end{note*}
Consider the group $\Omega$ acting on $\pp^{3}\times\pp^{4}$ generated
by the following five elements: the point $[x,y,z,w,A,B,C,D,E]$ is
sent to
\begin{itemize}
\item $[x,y,z,-w,A,-B,C,D,E]$,
\item $[x,y,w,z,A,B,C,E,D]$,
\item $[x,z,y,w,A,B,D,C,E]$,
\item $[x,y,iz,iw,A,-B,C,-D,-E]$,
\item $[x-y,x+y,z-w,z+w,2A+C,8(D-E),12A-2C,B+2D+2E,-B+2D+2E].$
\end{itemize}
\noindent Denote these five elements by $\phi_{1},\phi_{2},\phi_{3},\phi_{4}$
and $\phi_{5}$ respectively. The group $\Omega$ fixes $\mathcal{X}$.
While it is a rather large group with order $2^{4}\cdot6!$, we can
pick out a normal subgroup $\Gamma$, which is generated by the following
four elements
\begin{itemize}
\item $\gamma_{1}:=\phi_{3}\phi_{4}^{2}\phi_{3}\phi_{5}^{2}$,
\item $\gamma_{2}:=\phi_{4}^{2}\phi_{3}\phi_{5}^{2}\phi_{3}$,
\item $\gamma_{3}:=\phi_{4}^{2}$\label{pg:gamma_3},
\item $\gamma_{4}:=\phi_{3}\phi_{4}^{2}\phi_{3}$.
\end{itemize}
The group $\Gamma$ consists of all elements of $\Omega$ which fix
$\pp_{[A,B,C,D,E]}^{4}$ in $\pp^{3}\times\pp^{4}$. In particular
upon picking a point $p\in\pp^{4}$ we have that $\Gamma$ is a subgroup
of $\mathrm{Aut}(X_{p})$ (when projecting the elements of $\Gamma$
onto the $\pp_{[x,y,z,w]}^{3}$ component). Explicitly, when regarding
$\Gamma$ as acting on $\pp^{3}$, we have that its generators are
\[
[x,y,z,w]\mapsto\begin{cases}
[y,x,w,z] & \gamma_{1}\\{}
[z,w,x,y] & \gamma_{2}\\{}
[x,y,-z,-w] & \gamma_{3}\\{}
[x,-y,z,-w] & \gamma_{4}
\end{cases}.
\]
From this we know that $\Gamma\cong C_{2}^{4}$. We calculate that
$\Omega/\Gamma\cong S_{6}$, but $\Omega\ncong C_{2}^{4}\times S_{6}$
because in particular $\Omega$ has trivial centre. We now consider
the cases when $X_{p}$ is not a smooth surface using the following
proposition taken from \cite[Prop 2.1]{eklund2010}.
\begin{prop}
\label{prop:Singular Conditions}Let $p=[A,B,C,D,E]\in\pp^{4}$. The
surface $X_{p}$ is singular if and only if
\[
\Delta\cdot A\cdot q_{+C}\cdot q_{-C}\cdot q_{+D}\cdot q_{-D}\cdot q_{+E}\cdot q_{-E}\cdot p_{+0}\cdot p_{+1}\cdot p_{+2}\cdot p_{+3}\cdot p_{-0}\cdot p_{-1}\cdot p_{-2}\cdot p_{-3}=0,
\]
where:
\begin{equation}
\Delta=16A^{3}+AB^{2}-4A(C^{2}+D^{2}+E^{2})+4CDE\label{eq:Delta}
\end{equation}
\begin{flalign*}
q_{+C}= & 2A+C & q_{+D}= & 2A+D & q_{+E}= & 2A+E\\
q_{-C}= & 2A-C & q_{-D}= & 2A-D & q_{-E}= & 2A-E\\
p_{+0}= & 4A+B+2C+2D+2E &  &  & p_{-0}= & 4A-B+2C+2D+2E\\
p_{+1}= & 4A+B+2C-2D-2E &  &  & p_{-1}= & 4A-B+2C-2D-2E\\
p_{+2}= & 4A+B-2C+2D-2E &  &  & p_{-2}= & 4A-B-2C+2D-2E\\
p_{+3}= & 4A+B-2C-2D+2E &  &  & p_{-3}= & 4A-B-2C-2D+2E.
\end{flalign*}
\end{prop}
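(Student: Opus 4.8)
The plan is to use the Jacobian criterion: $X_p\subset\pp^3$ is singular precisely when the four partial derivatives of its defining quartic $F$ have a common zero in $\pp^3$, and by the Euler relation $\sum_i x_i\,\partial_{x_i}F = 4F$ any such common zero automatically lies on $X_p$. So I would analyse the system $\partial_x F = \partial_y F = \partial_z F = \partial_w F = 0$. A short computation writes these as $\partial_x F = 2x\,P_x + Byzw$ with $P_x := 2Ax^2 + Cy^2 + Dz^2 + Ew^2$, together with its three images under the coordinate permutations, so that $x\,\partial_x F = 2x^2 P_x + Bxyzw$ and similarly for $y,z,w$. Since $\Gamma$ preserves $X_p$ the singular locus is $\Gamma$-stable, and the permutation part of $\Gamma$ acts on $\{x,y,z,w\}$ as the Klein four-group $\{e,(xy)(zw),(xz)(yw),(xw)(yz)\}$, which is transitive; I will use this to reduce the number of cases when normalising which coordinates of a hypothetical singular point vanish.

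For the ``if'' direction I would exhibit an explicit singular point for each factor. If $A=0$ then every coordinate vertex, e.g. $[1:0:0:0]$, is singular, since all four partials vanish there. If $q_{+C}q_{-C}=0$, i.e. $4A^2 = C^2$, then on the edge $\{z=w=0\}$ the system collapses to $\left(\begin{smallmatrix}2A & C\\ C & 2A\end{smallmatrix}\right)\left(\begin{smallmatrix}x^2\\ y^2\end{smallmatrix}\right)=0$, which has a nonzero solution exactly then; the factors $q_{\pm D}$ and $q_{\pm E}$ arise by the identical computation on the edges $\{y=w=0\}$ and $\{y=z=0\}$. For each factor $p_{\pm i}$ I would check directly that one of the eight points $[1:\pm1:\pm1:\pm1]$, together with their imaginary analogues such as $[1:-1:i:i]$ and permutations, makes all four partials a common scalar multiple of $p_{\pm i}$: for instance at $[1:1:1:1]$ every partial equals $p_{+0}$, and at $[1:-1:i:i]$ every partial is $\pm1$ or $\pm i$ times $p_{+1}$. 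The remaining, and genuinely harder, factor is $\Delta$, whose vanishing should produce a singular point with all coordinates nonzero and not of the special symmetric type above.

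For the ``only if'' direction I would run a case analysis on the number of vanishing coordinates of a singular point $P$. If $P$ has at least two zero coordinates it lies on a coordinate line; on $\{z=w=0\}$ the two surviving equations force either $A=0$ (when a third coordinate also vanishes, so $P$ is a vertex) or $4A^2=C^2$, and the lines $\{y=w=0\}$, $\{y=z=0\}$ likewise force $A=0$ or $q_{\pm D}=0$, $q_{\pm E}=0$. If $P$ has exactly one zero coordinate, say $w=0$ with $xyz\neq 0$, then the fourth equation reads $Bxyz=0$, forcing $B=0$; the remaining three equations become a homogeneous linear system in $(x^2,y^2,z^2)$ whose matrix is the symmetric $3\times 3$ matrix with diagonal $2A$ and off-diagonal entries $C,D,E$, and one computes that its determinant equals $\tfrac12\,\Delta|_{B=0}$, so this case contributes exactly $\Delta=0$. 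The crux is the case where $P$ has no zero coordinate.

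In that last case the equalities $x^2 P_x = y^2 P_y = z^2 P_z = w^2 P_w = -\tfrac12 Bxyzw$ give three independent relations on $P\in\pp^3$, so one expects their simultaneous solvability to be governed by a single polynomial condition on $[A:B:C:D:E]$. The clean way to extract it is by elimination: form the ideal generated by the four partials in $\qq[x,y,z,w,A,B,C,D,E]$, saturate by $xyzw$ and by the irrelevant ideal of $\pp^3$, and eliminate $x,y,z,w$; the resulting ideal in $\qq[A,B,C,D,E]$ should cut out precisely the union of $\{\Delta=0\}$ with the loci $\{p_{\pm i}=0\}$ already located. This resultant computation, which I would carry out in Magma, is the main obstacle, both because the four cubics are dense and because one must cleanly separate the genuine factor $\Delta$ from the spurious factors introduced by saturating along $xyzw$ and from the symmetric-point solutions giving the $p_{\pm i}$. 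Combining the two directions --- every listed factor produces a singular point, and conversely every singular point forces one of the listed factors to vanish --- then yields the stated equivalence, with the multiplicities in the full discriminant being irrelevant to it.
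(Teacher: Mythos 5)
The paper contains no proof of this proposition at all: it is imported verbatim from Eklund \cite[Prop 2.1]{eklund2010} (``using the following proposition taken from\dots''), so there is nothing internal to compare your argument against, and any self-contained proof is by definition a different route. Your route is sound, and every piece of it that can be checked by hand does check out: $\partial_{x}F=2xP_{x}+Byzw$ with $P_{x}=2Ax^{2}+Cy^{2}+Dz^{2}+Ew^{2}$ and its permuted analogues are the correct partials; the coordinate vertices account exactly for the factor $A$; the $2\times2$ systems on the six coordinate lines account exactly for $q_{\pm C},q_{\pm D},q_{\pm E}$ (the three pairs of lines matching the three orbits of the Klein four-group, as you say); the $\Gamma$-orbits of $[1:\pm1:\pm1:\pm1]$ and of the imaginary variants such as $[1:-1:i:i]$ account for the sixteen factors $p_{\pm j}$ (at $[1:-1:i:i]$ the four partials are $p_{+1},-p_{+1},-ip_{+1},-ip_{+1}$); and for a singular point with exactly one vanishing coordinate one indeed gets $B=0$ together with the $3\times3$ determinant $8A^{3}-2A(C^{2}+D^{2}+E^{2})+2CDE=\tfrac{1}{2}\Delta|_{B=0}$, hence $\Delta=0$.

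Two points need to be made explicit before this is complete. First, the ``if'' direction for $\Delta$ splits into two subcases: when $B=0$ you can build the singular point directly from a kernel vector of your $3\times3$ matrix (every element of $\overline{\qq}$ is a square, and a kernel vector with a zero entry still gives a singular point, merely with more vanishing coordinates); but when $B\neq0$ you are relying entirely on the deferred elimination, and there you must also invoke properness: the incidence locus $\{(P,p):P\in X_{p}\ \mathrm{singular}\}\subset\pp^{3}\times\pp^{4}$ is closed, so its image in $\pp^{4}$ is closed and coincides with the eliminated variety, which is what guarantees that every point of $\{\Delta=0\}$ genuinely carries a singular point rather than merely lying in a Zariski closure. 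Second, your expectation for the output of the elimination is the right one precisely because saturating by $xyzw$ kills the components lying over $\{A=0\}$ and $\{q_{\pm\alpha}=0\}$ (whose singular points have at least two zero coordinates) while retaining those over $\{p_{\pm j}=0\}$ and $\{\Delta=0\}$. With these remarks added, your outline is a complete proof modulo a finite, well-specified Gr\"obner computation --- which is in the spirit of how this fact has to be established anyway, since the proposition is ultimately the factorisation of the discriminant of the family.
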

\begin{defn}
The surface $S_{3}=\left\{ \Delta=0\right\} \subset\pp^{4}$ is the
\textit{Segre cubic}\textit{\emph{. We shall refer to the $15$ hyperplanes
in $\pp^{4}$ defined by the $15$ equations 
\[
\left\{ A,p_{\pm j},q_{\pm\alpha}:\alpha\in\left\{ C,D,E\right\} ,j\in\left\{ 0,1,2,3\right\} \right\} 
\]
 above as the }}\textit{singular hyperplanes}\textit{\emph{. }}
\end{defn}
\noindent The Segre cubic has $10$ singular points, namely:
\[
[1,0,-2,-2,2],[1,0,-2,2,-2],[1,0,2,-2,-2],[1,0,2,2,2],
\]
\[
[0,-2,1,0,0],[0,2,1,0,0],[0,-2,0,1,0],[0,2,0,1,0],[0,-2,0,0,1],\mathrm{\,and\,}[0,2,0,0,1].
\]
We shall denote these $10$ points by $q_{i}$, $i\in[1,\dots,10]$,
as ordered above. These $10$ points have associated quartics in $\pp^{3}$,
which turns out to be quadrics of multiplicity two. We denote the
quadric associated to the point $q_{i}$ by $Q_{i}$. Explicitly they
are:
\[
x^{2}-y^{2}-z^{2}+w^{2}=0,\,x^{2}-y^{2}+z^{2}-w^{2}=0,\,x^{2}+y^{2}-z^{2}-w^{2}=0,\,x^{2}+y^{2}+z^{2}+w^{2}=0,
\]
\[
xy-zw=0,\,xy+zw=0,\,xz-yw=0,\,xz+yw=0,\,xw-yz=0,\mathrm{\,and\,}xw+yz=0.
\]

\selectlanguage{english}%
\begin{defn}
We define the \emph{Nieto quintic}, \foreignlanguage{british}{\emph{$N_{5}\subseteq\pp_{[A,B,C,D,E]}^{4}$,}
}by the equation
\[
4A^{3}(48A^{2}-B^{2})-A(32A^{2}-B^{2})(C^{2}+D^{2}+E^{2})-4A(C+D+E)(C+D-E)(C-D+E)(-C+D+E)+B^{2}CDE=0.
\]
\end{defn}
\selectlanguage{british}%
The Nieto quintic was studied by Barth and Nieto when they were looking
at K3 surfaces in $\mathcal{X}$ containing lines. In particular,
they proved in \cite[Section 7 and 8]{BarthNieto} the following proposition.
\selectlanguage{english}%
\begin{prop}
\label{prop:Barth=000026Nieto} Let $p\in\pp^{4}$, then the surface
$X_{p}$ contains a line, $L$, if and only if $p$ is in $N_{5}$
or in one of the $10$ tangent cones to the isolated singular points
of $N_{5}$ (i.e., the $10$ nodes of $S_{3}$).

In the case where $p$ lies on the tangent cone of $q_{i}$, then
$L$ lies on $Q_{i}$. 
\end{prop}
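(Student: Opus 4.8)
The plan is to realise the locus in question as the image of the incidence variety $I=\{(p,L)\in\pp^{4}\times G(2,4):L\subset X_{p}\}$, where $G(2,4)$ is the Grassmannian of lines in $\pp^{3}$, under the projection to $\pp^{4}$, to compute this image by elimination, and then to decompose it into components using the symmetry group $\Omega$. First I would parametrise a line in general position as $z=\lambda x+\mu y,\ w=\nu x+\rho y$, so that $[x:y]$ are homogeneous coordinates on $L\cong\pp^{1}$. Substituting into the defining quartic $F$ yields a binary quartic $F(x,y,\lambda x+\mu y,\nu x+\rho y)=\sum_{k=0}^{4}c_{k}\,x^{4-k}y^{k}$, whose coefficients $c_{k}=c_{k}(\lambda,\mu,\nu,\rho;A,B,C,D,E)$ are polynomials in the four line parameters and the five coefficients; the condition $L\subset X_{p}$ is precisely $c_{0}=\dots=c_{4}=0$. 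Eliminating $\lambda,\mu,\nu,\rho$ (by a Gröbner basis or resultant computation in Magma) produces the ideal of the image in $\pp^{4}$. The dimension count $4+4-5=3$ predicts a threefold, and I would expect its main component to be the Nieto quintic $N_{5}$.

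The decisive simplification is that the image is invariant under $\Omega$: if $X_{p}\supset L$ then $X_{\sigma p}\supset\sigma L$ for every $\sigma\in\Omega$. Projecting out $\Gamma$, the image is invariant under $\Omega/\Gamma\cong S_{6}$ acting on $\pp^{4}_{[A,B,C,D,E]}$, which is exactly the symmetry realised on the Segre cubic $S_{3}$ and on $N_{5}$. It therefore suffices to analyse the components up to this $S_{6}$-action, and I expect exactly one $S_{6}$-stable component, the quintic $N_{5}$, together with a single $S_{6}$-orbit of ten further components attached to the ten nodes $q_{i}$ of $S_{3}$ (which are the singular points of $N_{5}$). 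Identifying $N_{5}$ as a component reduces to checking divisibility of the eliminant by its defining quintic, or equivalently that a general point of $N_{5}$ admits a solution of $c_{0}=\dots=c_{4}=0$.

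For the remaining components and the last sentence of the statement I would argue by first-order deformation at a node. The point $q_{i}$ corresponds to the double quadric $X_{q_{i}}=\{Q_{i}^{2}=0\}$, and $Q_{i}$ being smooth carries two rulings of lines. Writing $p=q_{i}+\varepsilon v$, so that $F_{p}=Q_{i}^{2}+\varepsilon F_{v}$, and taking a line $L_{0}\subset Q_{i}$, the identity $Q_{i}|_{L_{0}}=0$ kills the order-zero term $Q_{i}^{2}|_{L_{0}}$ and, since $d(Q_{i}^{2})=2Q_{i}\,dQ_{i}$, also the contribution of any deformation of $L_{0}$; hence the first-order condition for $L_{0}$ to survive onto $X_{p}$ collapses to $F_{v}|_{L_{0}}\equiv0$. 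As $L_{0}$ ranges over a ruling $\cong\pp^{1}$ this is a determinantal condition on $v$ cutting out a cone through $q_{i}$, which I would identify with the tangent cone to $N_{5}$ at $q_{i}$; because each such $L_{0}$ lies on $Q_{i}$ by construction, this simultaneously establishes that the line lies on $Q_{i}$, as claimed.

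The main obstacle is the elimination together with the clean identification of components. One must separate the genuine geometric pieces ($N_{5}$ and the ten quadric tangent cones) from spurious factors created by the chart, namely lines inside a coordinate hyperplane or lines on some $Q_{i}$ that fail to be in general position, and one must check that no further components appear and that the first-order deformation locus indeed fills out the whole tangent cone rather than a proper subvariety. Here the $S_{6}$-symmetry, which forces the ten node-components into a single orbit, and a careful Magma computation are the tools I would rely on to pin the answer down.
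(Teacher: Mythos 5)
The paper does not actually prove this proposition: it is imported verbatim from Barth and Nieto \cite[Sections 7 and 8]{BarthNieto}, so there is no internal proof to compare against, and your attempt must be judged as an independent argument. As such it is a sensible plan, but it has a genuine logical gap and leaves the decisive steps undone. The gap concerns the last sentence of the statement --- that when $p$ lies on the tangent cone of $q_{i}$, \emph{every} line $L\subset X_{p}$ lies on $Q_{i}$. Your justification (``because each such $L_{0}$ lies on $Q_{i}$ by construction'') is circular: the deformation computation only \emph{produces} lines, showing that a line already on $Q_{i}$ survives onto $X_{p}$ if and only if $F_{v}|_{L_{0}}\equiv0$. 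It says nothing about a line $L\not\subset Q_{i}$, for which the condition is $\left(Q_{i}|_{L}\right)^{2}=-tF_{v}|_{L}$, i.e.\ that the binary quartic $-tF_{v}|_{L}$ is a perfect square --- a nontrivial possibility you have not excluded. Moreover, the elimination you propose computes only the \emph{image} of the incidence variety $I$ in $\pp^{4}$, which forgets which lines occur over each point; to obtain the second sentence you would have to decompose $I$ itself and show that every component of $I$ dominating the cone of $q_{i}$ (other than those lying over $N_{5}$) consists of pairs $(p,L)$ with $L\subset Q_{i}$.

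Beyond this, the two steps carrying essentially all the content are deferred rather than carried out: the identification of the locus $\left\{ v:\exists\,L_{0}\subset Q_{i}\text{ with }F_{v}|_{L_{0}}\equiv0\right\} $ with the tangent cone to $N_{5}$ at $q_{i}$ is a guess (``which I would identify with''), and the completeness of the component list (nothing besides $N_{5}$ and the ten cones) rests on a Gr\"obner elimination that is assumed to succeed. A smaller, easily repaired point: as phrased, your argument at $q_{i}$ is first order in $\varepsilon$, which by itself concerns only an infinitesimal neighbourhood of $q_{i}$, not actual points of the tangent cone. You should note that $F_{p}$ is \emph{linear} in $[A,B,C,D,E]$, so that $F_{q_{i}+tv}=Q_{i}^{2}+tF_{v}$ exactly; then $F_{v}|_{L_{0}}\equiv0$ is equivalent, for every $t\neq0$, to $L_{0}\subset X_{q_{i}+tv}$. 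With that observation your construction becomes the exact computation that this paper performs in Proposition \ref{prop:Conatins 8 Lines} on $Q_{6}$, where the surviving ruling lines are pinned down by $A\alpha^{4}+D\alpha^{2}+A=0$; but that proposition takes the Barth--Nieto classification as input, precisely because the ``only if'' direction and the containment of all lines in $Q_{i}$ are the hard parts.
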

\selectlanguage{british}%
As Eklund studies in detail the K3 surfaces defined by a point lying
on the Nieto quintic \cite{eklund2010}, we study here those surfaces
defined by a point lying on the $10$ tangent cones. We first make
the following remark:
\selectlanguage{english}%
\begin{rem*}
The four roots of the equation $f=x^{4}+cx^{2}+1$ are of the form
\[
\alpha=\frac{1}{2}\left(\sqrt{-c+2}+\sqrt{-c-2}\right).
\]
To see this, note that $\alpha^{2}=\frac{1}{2}(-c+\sqrt{c^{2}-4})$
which solves $y^{2}+cy+1$.
\end{rem*}
\begin{prop}
\label{prop:Conatins 8 Lines}\foreignlanguage{british}{Let $p\in\pp^{4}$
lie on one of the $10$ tangent cones to the isolated singular points
of $N_{5}$, away from $N_{5}$ and the $15$ singular planes. Then
the surface $X_{p}$ contains eight lines. In the case where $p$
lies on a unique tangent cone, $X_{p}$ contains exactly eight lines.}
\end{prop}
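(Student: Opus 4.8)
The plan is to exploit Proposition \ref{prop:Barth=000026Nieto}, which tells us that every line on $X_{p}$ lies on the smooth quadric $Q_{i}$ attached to the node $q_{i}$ whose tangent cone contains $p$, and then to count directly the lines of $Q_{i}$ lying on $X_{p}$. Since $\Omega$ preserves the family $\mathcal{X}$ and $\Omega/\Gamma\cong S_{6}$ permutes the ten nodes $q_{1},\dots,q_{10}$ of the Segre cubic $S_{3}$ transitively (hence permutes the ten quadrics $Q_{i}$, the ten tangent cones, $N_{5}$, and the singular hyperplanes), it suffices to treat one representative; I would take $q_{5}=[0,-2,1,0,0]$ with $Q_{5}=\{xy-zw=0\}$, the remaining cases following by applying a suitable $\phi\in\Omega$, which carries lines on $Q_{i}$ bijectively to lines on $Q_{5}$ and so preserves their number.

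First I would parametrise the smooth quadric $Q_{5}$ by $\pp^{1}\times\pp^{1}$ via $([s:t],[u:v])\mapsto[su:tv:sv:tu]$ and restrict the quartic to it. A direct substitution gives, after grouping, a bidegree-$(4,4)$ form that can be written as
\[
(u^{4}+v^{4})\bigl(A(s^{4}+t^{4})+Es^{2}t^{2}\bigr)+u^{2}v^{2}\bigl(D(s^{4}+t^{4})+(B+2C)s^{2}t^{2}\bigr).
\]
The two rulings of $Q_{5}$ give the two families of candidate lines. A line of the ruling $[u:v]=\text{const}$ lies on $X_{p}$ exactly when the coefficients of both $s^{4}+t^{4}$ and $s^{2}t^{2}$ vanish, i.e.\ when $[u:v]$ is a common root of the binary quartics $A(u^{4}+v^{4})+Du^{2}v^{2}$ and $E(u^{4}+v^{4})+(B+2C)u^{2}v^{2}$; the other ruling gives the analogous pair with $D$ and $E$ interchanged, in the variable $[s:t]$.

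Setting $\tau=u^{2}/v^{2}$ turns each quartic into a palindromic quadratic, $A\tau^{2}+D\tau+A$ and $E\tau^{2}+(B+2C)\tau+E$. Computing their resultant I expect to obtain $\bigl(A(B+2C)-DE\bigr)^{2}$, so a common root exists precisely on the quadric cone $\{A(B+2C)=DE\}$, which one checks is singular exactly at $q_{5}$ and is therefore the tangent cone of Proposition \ref{prop:Barth=000026Nieto}. The key observation—and the step that turns four lines into eight—is that a palindromic quadratic has its roots closed under $\tau\mapsto 1/\tau$, so two such quadratics sharing one root share both; equivalently, on $\{A(B+2C)=DE\}$ the two quadratics above are proportional. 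Hence each ruling contributes the two roots $\tau_{1},\tau_{2}$ of $A\tau^{2}+D\tau+A$ (respectively the two roots of $A\sigma^{2}+E\sigma+A$), and each root yields two lines through $u/v=\pm\sqrt{\tau_{j}}$, for a total of $4+4=8$ lines.

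Finally I would address distinctness and the word \emph{exactly}. The two roots are distinct and nonzero provided the discriminants $D^{2}-4A^{2}$ and $E^{2}-4A^{2}$ and the leading coefficient $A$ are nonzero, i.e.\ provided $p$ avoids the singular hyperplanes $A=0$, $q_{\pm D}=0$ and $q_{\pm E}=0$; the square roots then give four distinct points in each ruling, and lines from different rulings are automatically distinct, so the eight lines are pairwise distinct. When $p$ lies on a unique tangent cone and off $N_{5}$, Proposition \ref{prop:Barth=000026Nieto} forces every line of $X_{p}$ onto $Q_{5}$, so these eight are all of them. The main obstacle I anticipate is not any single computation but the careful bookkeeping: verifying the palindromic/proportionality mechanism that doubles the count, matching the resultant locus with the tangent cone, and pinning down exactly which singular hyperplanes must be excluded to keep the eight lines distinct.
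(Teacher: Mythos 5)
Your strategy coincides with the paper's own proof: invoke Proposition \ref{prop:Barth=000026Nieto} to force every line onto the relevant quadric $Q_{i}$, count the lines in the two rulings explicitly at one node, and transport the count to the other nine cones via the $\Omega$-action. Your computations are correct: the restriction of the quartic to $Q_{5}$ under the Segre parametrisation is exactly the bidegree-$(4,4)$ form you wrote; the resultant of the two palindromic quadratics is indeed $\bigl(A(B+2C)-DE\bigr)^{2}$; the proportionality mechanism on that locus holds (including the degenerate case $E=0$, where on the locus $B+2C=0$ as well, the second quadratic vanishes identically, and the count of four lines per ruling is unchanged); and the exclusions $A\neq0$, $q_{\pm D}\neq0$, $q_{\pm E}\neq0$ are precisely what is needed for the eight lines to be distinct. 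The paper does the same thing at $q_{6}$, parametrising the rulings by $\alpha$ and factoring the restricted quartic using the cone equation $AB-2AC+DE=0$ quoted from Barth--Nieto.

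The one step whose justification fails is your identification of the resultant locus $\{A(B+2C)=DE\}$ with the tangent cone to $N_{5}$ at $q_{5}$. ``A quadric cone singular exactly at $q_{5}$'' does not pin it down: there is a large family of quadric cones in $\pp^{4}$ with vertex $q_{5}$, so this property alone cannot show your quadric is \emph{the} tangent cone. The identification is load-bearing, since the hypothesis of the proposition is that $p$ lies on the tangent cone, and in the case where $p$ lies on several cones you genuinely need each tangent cone to be contained in its resultant locus in order to produce eight lines on each corresponding quadric. Two clean repairs: (i) do what the paper does and quote the cone's equation from \cite[3.2]{BarthNieto} --- the cone at $q_{6}$ is $AB-2AC+DE=0$, and applying $\phi_{1}$ (which sends $B\mapsto-B$ and swaps $q_{5}$ with $q_{6}$) yields $A(B+2C)=DE$ at $q_{5}$, matching your resultant; or (ii) argue indirectly: for $p$ on the cone at $q_{5}$ only, off $N_{5}$ and the singular hyperplanes, Proposition \ref{prop:Barth=000026Nieto} guarantees at least one line, which must lie on $Q_{5}$, so by your computation $p$ satisfies $A(B+2C)=DE$; hence a dense subset of the tangent cone, and therefore the whole cone, lies inside the irreducible quadric $\{A(B+2C)=DE\}$, and since the tangent cone is itself a quadric hypersurface (the $q_{i}$ being double points of $N_{5}$) the two must coincide. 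With either patch your argument is complete and is essentially the paper's proof in different coordinates.
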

\selectlanguage{british}%
\begin{proof}
If $p\in\pp^{4}\setminus N_{5}$ lies on a unique tangent cone, say
$q_{i}$, then by Proposition \ref{prop:Barth=000026Nieto} all the
lines lying on $X_{p}$ must be lines lying on $Q_{i}$.

We first prove that when $p=[A,B,C,D,E]\in\pp^{4}$ lies on the tangent
cone of the point $q_{6}$, there are exactly eight lines lying on
$Q_{6}\cap X_{p}\subseteq\pp^{3}$. By \cite[3.2]{BarthNieto}, we
have that $p$ satisfies the equation $AB-2AC+DE=0$. The quadric
$Q_{6}:xy+wz=0$ has the following lines (for any $\alpha\in K^{*}$)

\begin{itemize}
\item $x+\alpha z=y-\alpha^{-1}w=0$,
\item $x+\alpha w=y-\alpha^{-1}z=0$,
\item $x=z=0$,
\item $x=w=0$,
\item $y=z=0$,
\item $y=w=0$.
\end{itemize}
Note that the last four lines can not lie on $X_{p}$, as $p$ does
not lie on the $15$ singular planes (hence $A\neq0$). Now $X_{p}\cap\{x+\alpha z=y-\alpha^{-1}w=0\}$
is defined by the equations: 
\begin{eqnarray*}
x+\alpha z & = & 0,\\
y-\alpha^{-1}w & = & 0,\\
(A\alpha^{4}+D\alpha^{2}+A)\left(z^{4}+\frac{w^{4}}{\alpha^{4}}\right)+\left(E\alpha^{4}+(2C-B)\alpha^{2}+E\right)\frac{z^{2}w^{2}}{\alpha^{2}} & = & 0.
\end{eqnarray*}
As $AB-2AC+DE=0$ implies 
\begin{eqnarray*}
E\alpha^{4}+(2C-B)\alpha^{2}+E & = & E\alpha^{4}+\frac{DE}{A}\alpha^{2}+E\\
 & = & \frac{E}{A}(A\alpha^{4}+D\alpha^{2}+A),
\end{eqnarray*}
we have that the last equation becomes 
\begin{eqnarray*}
 &  & (A\alpha^{4}+D\alpha^{2}+A)\left(z^{4}+\frac{w^{4}}{\alpha^{4}}\right)+(A\alpha^{4}+D\alpha^{2}+A)\frac{Ez^{2}w^{2}}{A\alpha^{2}}\\
 & = & (A\alpha^{4}+D\alpha^{2}+A)(z^{4}+\frac{Ez^{2}w^{2}}{A\alpha^{2}}+\frac{w^{4}}{\alpha^{4}}).
\end{eqnarray*}
This is identically zero if and only $A\alpha^{4}+D\alpha^{2}+A=0$.
Hence there are exactly four lines of the form $x+\alpha z=y-\alpha^{-1}w=0$
on $X_{p}$, corresponding to the four roots of $A\alpha^{4}+D\alpha^{2}+A=0$.
We can run through exactly the same process for lines of the form
$x+\alpha w=y-\alpha^{-1}z=0$ and find that this time $\alpha$ needs
to solve $A\alpha^{4}+E\alpha^{2}+A=0$. Hence by letting
\[
\alpha=\frac{\sqrt{A}}{2A}\left(\sqrt{q_{-D}}+\sqrt{-q_{+D}}\right)
\]
\[
\beta=\frac{\sqrt{A}}{2A}\left(\sqrt{q_{-E}}+\sqrt{-q_{+E}}\right)
\]
we have the eight lines\begin{multicols}{2}

\begin{itemize}
\item $x+\alpha z=y-\alpha^{-1}w=0$, 
\item $x-\alpha z=y+\alpha^{-1}w=0$,
\item $x+\beta w=y-\beta^{-1}z=0$, 
\item $x-\beta w=y+\beta^{-1}z=0$, 
\item $x+\alpha^{-1}z=y-\alpha w=0$,
\item $x-\alpha^{-1}z=y+\alpha w=0$,
\item $x+\beta^{-1}w=y-\beta z=0$,
\item $x-\beta^{-1}w=y+\beta z=0$,
\end{itemize}
\end{multicols} which lie on our surface $X_{p}$, and these are
the only lines on $X_{p}\cap Q_{6}$. \foreignlanguage{english}{To
finish the proof, }we use the group $\Omega$ acting on $\pp^{3}\times\pp^{4}$.
This group permutes the $15$ singular planes and the $10$ points
$q_{1}$ as follows:

\begin{itemize}
\item $\phi_{1}$ acts as $(p_{+0},p_{-0})(p_{+1},p_{-1})(p_{+2},p_{-2})(p_{+3},p_{-3})$
and as $(q_{5},q_{6})(q_{7},q_{8})(q_{9},q_{10})$,
\item $\phi_{2}$ acts as $(q_{+D},q_{+E})(q_{-D},q_{-E})(p_{+2},p_{+3})(p_{-2},p_{-3})$
and as $(q_{1},q_{2})(q_{7},q_{9})(q_{8},q_{10})$,
\item $\phi_{3}$ acts as $(q_{+C},q_{+D})(q_{-C},q_{-D})(p_{+1},p_{+2})(p_{-1},p_{-2})$
and as $(q_{2},q_{3})(q_{5},q_{7})(q_{6},q_{8})$,
\item $\phi_{4}$ acts as $(q_{+D},q_{-D})(q_{+E},q_{-E})(p_{+0},p_{-1})(p_{-0},p_{+1})(p_{+2},p_{-3})(p_{-2},p_{+3})$
and as $(q_{1},q_{2})(q_{3},q_{4})(q_{5},q_{6})$,
\item $\phi_{5}$ acts as $(A,q_{+C})(q_{+D},p_{+0})(q_{-D},p_{-1})(q_{+E},p_{-0})(q_{-E},p_{+1})(p_{+2},p_{-3})$
and as $(q_{1},q_{5})(q_{2},q_{6})(q_{7},q_{10})$.
\end{itemize}
Hence by applying the appropriate element $\phi\in\Omega$ on the
above eight lines, we get the equations of the eight lines lying on
the surface $X_{\phi(p)}$. We have listed the equations of the lines
in Table \ref{tab:Equations-of-lines} of Appendix \ref{sec:Eqn of lines}.
\end{proof}
\selectlanguage{english}%
\label{pg:interchanging_lines}Using the fact that the eight lines
comes from the two different rulings of the quadric (one set using
$\alpha$, the other $\beta$), it is not hard to see that the lines
come in two sets of four skew lines. Furthermore each line from one
set intersects each of the four lines in the other set.

\selectlanguage{british}%
Finally, using the explicit equations, we note that given two (not
necessarily distinct) lines in one set, $L_{1}\,\mathrm{and\,}L_{2}$,
and two in the other set $M_{1}\mathrm{\,and\,}M_{2}$, there exists
a unique $\gamma\in\Gamma$ interchanging $L_{1}$ with $L_{2}$ and
$M_{1}$ with $M_{2}$.

We can use Proposition \ref{prop:Conatins 8 Lines} to find various
families containing $8,16,24,32$ and $48$ lines.\\

\begin{lem}
~\label{lem:Number of lines}

\begin{itemize}
\item A very general surface in the family $[A,(DE-2AC)/A,C,D,E]$ contains
exactly $8$ lines. We denote this family by $\mathcal{X}_{C,D,E}$,
\item A very general surface in the family $[A,0,C,D,2AC/D]$ contains exactly
$16$ lines. We denote this family by $\mathcal{X}_{C,D}$,
\item A very general surface in the family $[A,B(2A-B)/A,B,B,B]$ contains
exactly $24$ lines. We denote this family by $\mathcal{X}_{B}$,
\item A very  general surface in the family $[A,0,C,0,0]$ contains exactly
$32$ lines. We denote this family by $\mathcal{X}_{C}$,
\item The surface $[\sqrt{-3},12(\sqrt{-3}-1),6,6,-6]$ contains exactly
$32$ lines. We denote this surface by $Y$,
\item The Fermat quartic $[1,0,0,0,0]$ contains exactly $48$ lines. We
denote this surface by $F_{4}$. 
\end{itemize}
Up to an action of $\Omega$, there are no other families whose very
general member is smooth and lies on the tangent cones to one of the
points $q_{i}$.

\end{lem}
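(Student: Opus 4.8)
The plan is to reduce the whole statement to counting tangent cones. By Proposition \ref{prop:Barth=000026Nieto}, if $p\notin N_{5}$ then every line on $X_{p}$ lies on some quadric $Q_{i}$ with $p$ on the tangent cone $T_{i}$ at the node $q_{i}$, and the computation in the proof of Proposition \ref{prop:Conatins 8 Lines} shows that each such cone contributes exactly eight lines on $Q_{i}$ once $p$ avoids the $15$ singular hyperplanes. Hence, for a very general $p$ lying on exactly $k$ of the ten cones $T_{i}$ and on none of $N_{5}$ or the singular hyperplanes, the surface $X_{p}$ is smooth by Proposition \ref{prop:Singular Conditions} and carries exactly $8k$ lines, \emph{provided} the octets attached to distinct quadrics are pairwise disjoint. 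So I would first establish the slogan ``number of lines $=8\times(\text{number of cones }T_{i}\text{ through }p)$'', after which the lemma becomes a classification, up to the action of $\Omega$, of the maximal irreducible loci cut out in $\pp^{4}$ by collections of the $T_{i}$.

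Next I would write the ten cones down explicitly: at the six nodes with $A=0$ they take the pleasant form $AB\pm 2A\alpha\mp\beta\gamma$ for $\{\alpha,\beta,\gamma\}=\{C,D,E\}$ (as in the proof of Proposition \ref{prop:Conatins 8 Lines}, where $T_{6}$ is $AB-2AC+DE=0$), and the four cones at the nodes with $B=0$ are the quadratic tangent cones obtained the same way, or propagated from one of them by the $\Omega$-symmetry. Substituting the generic point of each listed family into all ten equations is then a direct check: $\mathcal{X}_{C,D,E}$ lies on a single cone, $\mathcal{X}_{C,D}$ on two, $\mathcal{X}_{B}$ on three, $\mathcal{X}_{C}$ and $Y$ on four, and $F_{4}$ on six, and on no others, and one confirms each generic point avoids $N_{5}$ and the singular hyperplanes (so its generic member is smooth). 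For $Y$ the values $A=\sqrt{-3}$ etc.\ are precisely those placing it on one further cone among $T_{1},\dots,T_{4}$ beyond the three obvious ones, which is why $Y$ is an isolated point rather than a member of a positive-dimensional family; for $F_{4}$ I would instead invoke the classical count of $48$ lines, consistent with six cones. The disjointness of the octets, needed for ``exactly'' rather than ``at least'', follows from the explicit equations of Table \ref{tab:Equations-of-lines}: lines on $Q_{i}$ and on $Q_{j}$ have visibly different shapes for very general parameters, so no line is double counted and the totals $8,16,24,32,32,48$ are exact.

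The real work is the completeness clause. Here I would use that the ten nodes $q_{i}$ are naturally the ten partitions of a six-element set into two triples, on which $\Omega/\Gamma\cong S_{6}$ acts in the standard way; in particular $S_{6}$ is transitive on nodes and, since the stabiliser of a partition acts transitively on the remaining nine, transitive on unordered pairs of nodes as well (so the three-fold $\mathcal{X}_{C,D,E}$ and the surface $\mathcal{X}_{C,D}$ are each unique up to $\Omega$). For each orbit representative $S$ of a $k$-subset of nodes I would compute the common zero locus $V_{S}=\bigcap_{i\in S}T_{i}$, discard every component contained in $N_{5}$ or a singular hyperplane, and for each surviving component record the \emph{saturated} set of all cones containing it; its generic point then carries exactly $8\cdot(\text{saturation size})$ lines. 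Reading off the maximal components should yield, up to $\Omega$, one three-fold ($k=1$), one surface ($k=2$), one curve with $k=3$, one curve and one isolated point with $k=4$, and one isolated point with $k=6$, with nothing new at $k=5$ — exactly the six entries of the lemma.

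The step I expect to be the main obstacle is this final classification: organising the $S_{6}$-orbits of subsets so that no configuration is missed, and in particular (a) separating the two inequivalent four-cone configurations, namely the curve $\mathcal{X}_{C}$ (four cones meeting in a line) from the rigid point $Y$ (four cones meeting in dimension zero), and (b) checking that every five-cone intersection either lies in $N_{5}$ or a singular hyperplane or is forced onto a sixth cone, hence into the Fermat orbit, so that no $40$-line family can occur. This is a finite but laborious computation over the ten explicit quadrics, which I would carry out in Magma as the paper's other point counts are. A secondary point requiring care throughout is confirming that the generic member of each retained component is genuinely smooth and off $N_{5}$, since only then does Proposition \ref{prop:Barth=000026Nieto} guarantee that no lines appear beyond the counted octets.
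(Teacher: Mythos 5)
Your proposal takes essentially the same route as the paper's own proof: reduce to ``eight lines per tangent cone'' via Propositions \ref{prop:Barth=000026Nieto} and \ref{prop:Conatins 8 Lines}, then classify the families by running over $\Omega$-orbit representatives of subsets of the ten cones, intersecting the cones, discarding irreducible components lying in the bad loci, and reading off the surviving components in a finite Magma computation. The extra care you flag (disjointness of the octets from distinct quadrics, avoidance of $N_{5}$) is left implicit in the paper but is handled by the same observations you make, so your outline is correct.
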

\begin{proof}
Note that for each family, a very general point will not be on $N_{5}$,
hence if for each family a very general member lie on $m$ distinct
tangent cones, then it will contain $8m$ lines as claimed. 

Recall that $\Omega$ acts on the 10 points $q_{i}$, and hence on
the $10$ tangent cones. For each $m\in\{1,\dots,10\}$, we find representatives
of the action of $\Omega$ on sets of size $m$. For example, when
$m=2$, as $\Omega$ is two-transitive, we have the representative
$\{q_{1},q_{2}\}$, for $m=3$, we have two representative $\{q_{1},q_{2},q_{3}\}$
and $\{q_{2},q_{4},q_{5}\}$. Starting from $m=10$ to $1$, for each
representative we intersect the corresponding tangent cones. We look
at its irreducible components and discard any that is a subset of
$\mathcal{L}$ (the union of the $15$ singular hyperplanes), any
component remaining give us a family that we list. This also verifies
that our list is complete. This calculation is available online \cite{ThesisCode}.
\end{proof}
We illustrate how the families fit together with the following diagram.
The lines show which family is a subfamily of another family.

\selectlanguage{english}%
\[
\xyR{1.5pc}\xyC{-0.5pc}\xymatrix{\mathrm{Dimension}\\
0 & \underset{48\mathrm{\,lines}}{F_{4}=[1,0,0,0,0]} &  & \underset{32\,\mathrm{lines}}{Y=[\sqrt{-3},12\left(\sqrt{-3}-1\right),6,6,-6]}\ar@{-}[dddl]\\
1 & \underset{32\,\mathrm{lines}}{\mathcal{X}_{C}=[1,0,C,0,0]}\ar@{-}[u] & \underset{24\,\mathrm{lines}}{\mathcal{X}_{B}=[1,B(2-B),B,B,B]}\ar@{-}[ul]\\
2 & \underset{16\,\mathrm{lines}}{\mathcal{X}_{C,D}=[1,0,C,D,2C/D]}\ar@{-}[u]\\
3 &  & \underset{8\,\mathrm{lines}}{\mathcal{X}_{C,D,E}=[1,DE-2C,C,D,E]}\ar@{-}[uu]\ar@{-}[ul]\\
4 &  & \underset{0\,\mathrm{lines}}{\mathcal{X}=[1,B,C,D,E]}\ar@{-}[u]
}
\]

\selectlanguage{british}%

\section{The Picard Group\label{sec:The-Picard-Group}}

We now turn to proving that the Picard rank of the families given
above are those claimed by (the summarised) Theorem \ref{thm:Main Result Final}.
Note that we already know this to be true for the Fermat quartic,
$x^{4}+y^{4}+z^{4}+w^{4}$ (see for example \cite{NSofFermat}) and
the family, $\mathcal{X}$, parameterised by $\pp^{4}$ (\cite{eklund2010}).
To achieve this, for each family we will bound the rank from below
and above. To bound the Picard rank from below, we use the following
theorem proven in \foreignlanguage{english}{\cite[Thm 4.3]{eklund2010}.}
\selectlanguage{english}%
\begin{thm}
\label{thm:320-Conics}A very general K3 surface $X$ from the family
$\mathcal{X}$ contains at least $320$ smooth conics. 
\end{thm}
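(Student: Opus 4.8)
The plan is to produce the conics by an explicit reducible-plane-section construction and then to organise them into the full count of $320$ using the symmetry of the family, the whole thing being uniform in the parameters so that ``very general'' comes for free.

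First I would look for conics as components of reducible plane sections. If $C\subset X_{p}$ is a smooth conic it spans a plane $P\cong\pp^{2}$, and since $\deg(X_{p}\cap P)=4$ the residual curve $C'=\overline{(X_{p}\cap P)\setminus C}$ is again a conic; thus conics occur in pairs cut out by planes $P$ for which the restricted quartic form $F=X_{p}|_{P}$ factors as a product of two ternary quadratics. I would test this on the symmetric pencils of planes suggested by the defining equation, namely $\{x=\lambda y\}$ together with its images under the coordinate permutations and the fourth-root-of-unity scalings. On such a plane $F$ involves $y$ only through $y^{2}$, so $F=\alpha y^{4}+P(z,w)\,y^{2}+R(z,w)$ with $P$ a binary quadratic and $R$ a binary quartic in $z,w$; the requirement that $F$ split into two conics becomes a single polynomial condition on $\lambda$ whose coefficients are polynomials in $A,B,C,D,E$. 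Solving this condition yields finitely many ``splitting planes'' per pencil, and hence finitely many explicit conics whose equations depend algebraically on the parameters.

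Next I would account for the full count $320$ using the group action. On a fixed surface $X_{p}$ the subgroup $\Gamma\cong C_{2}^{4}$ acts by automorphisms and permutes both the splitting planes and the resulting conics; its Klein-four permutation part groups the six coordinate pairs into the three types attached to $C$, $D$ and $E$, while sign changes and the scalings furnish the remaining symmetry. I would show that the seed conics produced above fall into $\Gamma$-orbits whose sizes total $320$ (generically twenty free orbits of size $16$), checking in the process that no two of the listed conics coincide. Because everything here is explicit, the distinctness and smoothness --- that is, irreducibility, so that a splitting plane does not degenerate $C$ into a pair of lines --- can be verified by a single computation at one conveniently chosen member $X_{p_{0}}$ with $p_{0}$ off $N_{5}$ and off the fifteen singular hyperplanes, exactly in the spirit of the line count in Lemma~\ref{lem:Number of lines}.

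Finally, persistence to the very general member is built into the construction rather than argued by semicontinuity: the splitting planes are cut out by polynomials whose coefficients are regular functions of $[A,B,C,D,E]$, so their solutions define a family of conics over a dense open subset $U\subseteq\pp^{4}$, and ``very general'' then means simply ``for all $p$ in a suitable dense open subset.'' The main obstacle is to guarantee that these $320$ conics stay genuinely distinct and genuinely smooth over $U$: one must rule out that two of the explicit conics coincide identically in the parameters, or that one breaks into two lines, i.e. show that certain resultants and discriminants in $A,B,C,D,E$ do not vanish identically. Since each of these quantities is nonzero at the single well-chosen member $X_{p_{0}}$, it is nonzero on a dense open locus, and the intersection of these loci yields at least $320$ distinct smooth conics on the very general $X_{p}$. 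Controlling these degeneration loci is where the real work lies, and it is precisely what prevents the count from dropping below $320$.
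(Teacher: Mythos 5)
First, a point of reference: this paper does not prove Theorem \ref{thm:320-Conics} at all --- it is imported from Eklund \cite[Thm 4.3]{eklund2010}, with the explicit equations of the conics deferred to \cite{fbouyer_mono}. So your proposal is competing with Eklund's proof, and unfortunately it has a fatal gap at its first constructive step: the coordinate pencils $\{x=\lambda y\}$ (and all their images under permutations, sign changes and fourth-root-of-unity scalings, which preserve this type of plane) contain \emph{no} conics of a very general member of $\mathcal{X}$, so your seed set is empty and the orbit bookkeeping, distinctness checks and openness arguments downstream have nothing to act on. Concretely, on $\{x=\lambda y\}$ the restriction is $F=\alpha y^{4}+P(z,w)y^{2}+R(z,w)$ with $\alpha=A(1+\lambda^{4})+C\lambda^{2}$, $P=(D\lambda^{2}+E)z^{2}+B\lambda zw+(D+E\lambda^{2})w^{2}$ and $R=A(z^{4}+w^{4})+Cz^{2}w^{2}$. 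Since $F$ is even in $y$, unique factorisation forces any splitting $F=q_{1}q_{2}$ into one of two shapes. Either both $q_{i}$ are even in $y$, which requires the binary quartic $P^{2}-4\alpha R$ to be a perfect square: comparing the $z^{3}w$ and $zw^{3}$ coefficients forces $(D\lambda^{2}+E)^{2}=(D+E\lambda^{2})^{2}$, hence $\lambda^{2}=\pm1$, and the remaining coefficient comparison then imposes a nontrivial polynomial relation on the parameters themselves, namely $16A^{3}+AB^{2}-4AC^{2}-(2A\mp C)(D\pm E)^{2}=0$. Or else $q_{2}(y)\propto q_{1}(-y)$, which requires $R$ itself to be a perfect square, i.e.\ $q_{+C}q_{-C}=0$. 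Either way the splitting is \emph{not} ``a single polynomial condition on $\lambda$'': it cuts out a proper closed subset of $\pp^{4}$, so a very general $X_{p}$ (in particular one with $B\neq0$) has no conic in any such plane. A sanity check makes this vivid: on the Fermat quartic, $\{x=\lambda y\}$ cuts out $(1+\lambda^{4})y^{4}+z^{4}+w^{4}=0$, a smooth irreducible plane quartic unless $\lambda^{4}=-1$, in which case it is four lines --- never a pair of conics.

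The failure is structural, not an unlucky choice of pencil. Splitting into two conics is a codimension-$4$ condition in the $\pp^{14}$ of plane quartics, while the planes through a fixed surface move in only a $3$-dimensional family, so the existence of \emph{any} splitting plane is genuinely special; that Heisenberg invariance forces $320$ of them is exactly the nontrivial content of Eklund's theorem and cannot be obtained from a one-parameter ansatz whose solvability is itself generically obstructed. (Your ansatz does produce splitting planes on the hyperplane $B=0$ of the parameter space, where the odd coefficients of $P^{2}-4\alpha R$ vanish identically and one genuinely gets a single condition on $\lambda$ --- but that is a proper subfamily, consistent with its higher Picard rank, and does not touch the very general member of $\mathcal{X}$.) The true splitting planes have coefficients involving iterated square roots of expressions in $A,B,C,D,E$, which is precisely why Section \ref{sec:The-Picard-Group} of this paper remarks that the lines and conics are only defined over a degree $2^{10}$ extension. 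Your general framework --- residual pairs of conics, transport by $\Omega$ and $\Gamma$, verification at one good member spread out over a dense open set --- is sound and broadly parallels how such statements are handled here, but it only becomes a proof once at least one genuine splitting plane is exhibited on a very general member, and that is the step your construction cannot deliver.
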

The equations of the conics can be listed explicitly in terms of the
point $[A,B,C,D,E]\in\pp^{4}$ giving the surface $X$, more details
can be found in \cite{fbouyer_mono}. As the lines and conics are
elements of $\mathrm{Pic}(X)$, they form a sublattice of it. Hence
by using the explicit equations of the $320$ conics and $8m$ lines,
we can calculate their intersection matrix. The rank of said matrix,
which is the rank of the sublattice generated by lines and conics,
is a lower bound to the rank of the Picard group. 

To calculate an upper bound, we use three main ideas:
\selectlanguage{british}%

\subsection{Reduction at a good prime \label{subsec:Reduction}}

The first idea is due to Van Luijk \cite{vLuikPaper} which we briefly
recap here.
\begin{thm}
\label{thm:Reduction}Let $X$ be a \emph{K3 }surface defined over
a number field $K$. Choose a finite prime $\mathfrak{p}\subseteq\mathcal{O}_{K}$
of good reduction for $X$. Let $R=\left(\mathcal{O}_{K}\right)_{\mathfrak{p}}$
and $k$ its residue field. Fix an algebraic closure $\overline{K}$
of $K$, $\overline{R}$ the integral closure of $R$ in $\overline{K}$,
and let $\overline{k}=\overline{R}/\mathfrak{p}$ be the algebraic
closure of $k$. There are natural injective homomorphisms 
\[
\mathrm{NS}(X_{\overline{K}})\otimes\qq_{\ell}\hookrightarrow\mathrm{NS}(X_{\overline{k}})\otimes\qq_{\ell}\hookrightarrow H_{\mathrm{\acute{e}t}}^{2}(X_{\overline{k}},\qq_{\ell}(1))
\]
of finite dimensional vector space over $\qq_{\ell}$. The second
injection respects the Galois action $\mathrm{Gal}(\overline{k}/k)$.
\end{thm}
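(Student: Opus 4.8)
The plan is to construct the two arrows separately and then to show they are compatible. For the second injection I would first recall, for any smooth projective variety $Y$ over an algebraically closed field, the $\ell$-adic cycle class map $\mathrm{cl}_{Y}\colon\mathrm{NS}(Y)\otimes\qq_{\ell}\to H_{\mathrm{\acute{e}t}}^{2}(Y,\qq_{\ell}(1))$, and then establish its injectivity. The cleanest route to injectivity uses the non-degeneracy of the intersection pairing on $\mathrm{NS}(Y)$ for a surface, together with the fact that the cup-product pairing on $H_{\mathrm{\acute{e}t}}^{2}$ (via Poincar\'e duality) is compatible with the intersection pairing: if $\mathrm{cl}_{Y}(D)=0$ then $D\cdot E=\mathrm{cl}_{Y}(D)\cup\mathrm{cl}_{Y}(E)=0$ for every $E$, forcing $D=0$ in $\mathrm{NS}(Y)\otimes\qq$. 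Applying this with $Y=X_{\overline{k}}$ gives the second arrow, and its Galois-equivariance is automatic, since the \'etale cycle class map is functorial for the action of $\Gal(\overline{k}/k)$.

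For the first injection I would use the good-reduction hypothesis to choose a smooth proper model $\mathcal{X}\to\Spec R$ with generic fibre $X$ and special fibre $X_{k}$. The smooth proper base change theorem then produces a canonical isomorphism
\[
H_{\mathrm{\acute{e}t}}^{2}(X_{\overline{K}},\qq_{\ell}(1))\xrightarrow{\ \sim\ }H_{\mathrm{\acute{e}t}}^{2}(X_{\overline{k}},\qq_{\ell}(1)),
\]
coming from the fact that $R^{2}f_{*}\qq_{\ell}$ is lisse, so that the two geometric fibre cohomologies are identified with a common stalk. Next I would define the specialization homomorphism $\mathrm{NS}(X_{\overline{K}})\to\mathrm{NS}(X_{\overline{k}})$ by spreading out: a class over $\overline{K}$ is represented by a divisor defined over some finite extension $L/K$, whose Zariski closure in the model (base changed to the ring of integers of $L$) restricts to a divisor on the special fibre. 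The essential point is then the commutativity of the square whose two vertical arrows are $\mathrm{cl}_{X_{\overline{K}}}$ and $\mathrm{cl}_{X_{\overline{k}}}$, whose top is the specialization of Néron--Severi classes, and whose bottom is the base-change isomorphism above.

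With that square in hand the first injection is formal: after the base-change isomorphism, the composite
\[
\mathrm{NS}(X_{\overline{K}})\otimes\qq_{\ell}\longrightarrow\mathrm{NS}(X_{\overline{k}})\otimes\qq_{\ell}\xrightarrow{\ \mathrm{cl}\ }H_{\mathrm{\acute{e}t}}^{2}(X_{\overline{k}},\qq_{\ell}(1))
\]
agrees with $\mathrm{cl}_{X_{\overline{K}}}$, which is injective by the argument of the first paragraph. Hence the specialization map on Néron--Severi groups is itself injective, giving the first arrow.

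The main obstacle I anticipate is not the formal deduction of injectivity but the careful verification that the cycle class map commutes with specialization, i.e.\ the compatibility square above. This requires matching the algebraically defined specialization of divisor classes with the cohomological base-change isomorphism on $H_{\mathrm{\acute{e}t}}^{2}$, and controlling the spreading-out over a sufficiently large finite extension so that every $\overline{K}$-class is accounted for. The ingredients (smooth proper base change, the \'etale cycle class formalism, and its compatibility with proper specialization) are all standard, so in practice I would invoke them by reference; the content of the theorem lies entirely in assembling them into the stated commutative square.
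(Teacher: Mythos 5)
The paper contains no proof of this statement for you to be compared against: it is presented explicitly as a recap of Van Luijk's theorem, with a citation to \cite{vLuikPaper}, so the only meaningful comparison is with the standard argument. Measured that way, your sketch is correct, and every ingredient you invoke is a genuine theorem; the real difference lies in how injectivity of the first (specialization) arrow is obtained. The usual route, and Van Luijk's, gets it directly from intersection theory: the specialization homomorphism $\mathrm{NS}(X_{\overline{K}})\to\mathrm{NS}(X_{\overline{k}})$ of Fulton (\emph{Intersection Theory}, Section 20.3) respects intersection numbers, so if $\mathrm{sp}(D)=0$ then $D\cdot E=\mathrm{sp}(D)\cdot\mathrm{sp}(E)=0$ for all $E$, whence $D$ is numerically trivial, hence torsion in $\mathrm{NS}(X_{\overline{K}})$, hence zero after $\otimes\,\qq_{\ell}$. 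You instead deduce injectivity of the first arrow from injectivity of the second, via the smooth--proper base change isomorphism and the commutativity of your square; this is valid, but it makes the compatibility of cycle classes with specialization --- precisely the step you flag as the main obstacle --- load-bearing, whereas the intersection-theoretic route needs only that specialization preserves intersection numbers, which comes packaged with Fulton's construction. What each buys: your route exhibits both injections as facets of a single cohomological picture and makes the Galois-equivariance of the second arrow transparent; the intersection-theoretic route is more elementary and never needs base change at all. Two points to tighten if you write this up: the intersection pairing on $\mathrm{NS}$ of a surface is non-degenerate only \emph{modulo torsion} (harmless after tensoring with $\qq_{\ell}$, and $\mathrm{NS}$ of a K3 surface is torsion-free, but your phrasing is not literally correct for general surfaces); and in the spreading-out step you should localize the ring of integers of $L$ at a chosen prime above $\mathfrak{p}$, note that the base-changed model is still smooth over the resulting discrete valuation ring (hence regular, so closures of divisors are Cartier and restrict to the special fibre), and check independence of the choice of $L$.
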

\begin{prop}
Let $X$ be a \emph{K3 }surface defined over a finite field $\ff_{q}$
with $q=p^{r}$. Let $F_{q}:X\to X$ be the \emph{absolute Frobenius
map of $X$, }which acts on the identity on points, and by $x\mapsto x^{p}$
on the structure sheaf. Set $\Phi_{q}=F_{q}^{r}$ and let $\Phi_{q}^{*}$
denote the automorphism on $H_{\mathrm{\acute{e}t}}^{2}(\overline{X},\qq_{\ell})$
induced by $\Phi_{q}\times1$ acting on $X_{\overline{\ff}_{q}}$.
Then the rank of $\mbox{NS}(X_{\overline{\ff}_{q}})$ is bounded above
by the number of eigenvalues $\lambda$ of $\Phi_{q}^{*}$ for which
$\lambda/q$ is a root of unity (counted with multiplicity).
\end{prop}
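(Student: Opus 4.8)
The plan is to deduce the bound from the Galois-equivariant injection of Theorem \ref{thm:Reduction} by an essentially formal linear-algebra argument about the Frobenius action, the only substantive input being that cycle class map. Write $W\subseteq H_{\mathrm{\acute{e}t}}^{2}(\overline{X},\qq_{\ell}(1))$ for the image of $\mathrm{NS}(X_{\overline{\ff}_{q}})\otimes\qq_{\ell}$ under the cycle class map supplied by Theorem \ref{thm:Reduction}; since that map is injective we have $\dim_{\qq_{\ell}}W=\mathrm{rank}\,\mathrm{NS}(X_{\overline{\ff}_{q}})$. The strategy is to show that $W$ is stable under $\Phi_{q}^{*}$ and that $\Phi_{q}^{*}$ acts on $W$ with all eigenvalues roots of unity, and then to translate this statement across the Tate twist to the untwisted $H_{\mathrm{\acute{e}t}}^{2}(\overline{X},\qq_{\ell})$ where $\Phi_{q}^{*}$ actually lives.

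First I would check that $W$ is $\Phi_{q}^{*}$-stable: Frobenius carries divisors to divisors, so it permutes the algebraic classes, and by Theorem \ref{thm:Reduction} the cycle class map is compatible with the Galois action, whence $\Phi_{q}^{*}(W)\subseteq W$. Consequently the characteristic polynomial of $\Phi_{q}^{*}$ on $H_{\mathrm{\acute{e}t}}^{2}(\overline{X},\qq_{\ell}(1))$ factors as the product of its characteristic polynomials on $W$ and on the quotient, so the eigenvalues of $\Phi_{q}^{*}$ on $W$ (with multiplicity) form a sub-multiset of those on the whole space. Next I would argue that these eigenvalues are roots of unity: the group $\mathrm{NS}(X_{\overline{\ff}_{q}})$ is free of finite rank, and every class in it is defined over some finite extension $\ff_{q^{n}}$, hence fixed by $\Phi_{q^{n}}^{*}=(\Phi_{q}^{*})^{n}$; taking $n$ to be a common multiple over a finite generating set, $(\Phi_{q}^{*})^{n}$ is the identity on $W$, so every eigenvalue of $\Phi_{q}^{*}$ on $W$ is an $n$-th root of unity.

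Finally I would translate back to untwisted cohomology. The Tate twist by $(1)$ scales the Frobenius eigenvalues by $q$, so an eigenvalue $\mu$ of $\Phi_{q}^{*}$ on $H_{\mathrm{\acute{e}t}}^{2}(\overline{X},\qq_{\ell}(1))$ being a root of unity is exactly the condition that the corresponding eigenvalue $\lambda=q\mu$ of $\Phi_{q}^{*}$ on $H_{\mathrm{\acute{e}t}}^{2}(\overline{X},\qq_{\ell})$ satisfies that $\lambda/q$ is a root of unity. Combining the three steps gives
\[
\mathrm{rank}\,\mathrm{NS}(X_{\overline{\ff}_{q}})=\dim_{\qq_{\ell}}W\le\#\{\lambda:\lambda/q\text{ is a root of unity}\},
\]
counted with multiplicity, which is the assertion. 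The main point to handle carefully is the finite-order argument of the second paragraph, together with the bookkeeping of the Tate twist in the last step; granting the injectivity and Galois-compatibility of the cycle class map from Theorem \ref{thm:Reduction}, everything else is linear algebra, so I do not expect a serious obstacle beyond getting the twist convention right.
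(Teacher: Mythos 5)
Your proof is correct, and it is essentially the standard argument of van Luijk — which is precisely what the paper relies on here: the paper states this proposition without proof, as part of its recap of the method of \cite{vLuikPaper}, so there is no in-paper proof for yours to diverge from. The one step worth making explicit is the identification you use tacitly in your second paragraph: a divisor class defined over $\ff_{q^{n}}$ is fixed by the \emph{arithmetic} Frobenius in $\mathrm{Gal}(\overline{\ff}_{q}/\ff_{q^{n}})$, and this yields invariance under $(\Phi_{q}^{*})^{n}$ on the twisted module because the absolute Frobenius acts trivially on \'etale cohomology, so the geometric Frobenius pullback is the inverse of the Galois action; with that spelled out, your Tate-twist bookkeeping ($\lambda=q\mu$) is exactly right.
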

Hence, given a K3 surface over a number field $K$, its Picard rank,
$\rho(X_{\overline{K}})$, is bounded above by eigenvalues in a certain
form of $\Phi_{q}^{*}$. Such eigenvalues can be read off from the
characteristic polynomial, $f_{q}(x)$, of $\Phi_{q}^{*}$. To calculate
the characteristic polynomial we use the Lefschetz formula:
\[
\mathrm{Tr}\left(\left(\Phi_{q}^{*}\right)^{i}\right)=\#X_{k}(\ff_{q^{i}})-1-q^{2i},
\]
and the following lemma:
\begin{lem}[Newton's Identity]
Let $V$ be a vector space of dimension $n$ and $T$ a linear operator
on $V$. Let $t_{i}$ denote the trace of $T^{i}$. Then the characteristic
polynomial of $T$ is equal to 
\[
f_{T}(x)=\det(x\cdot\id-T)=x^{n}+c_{1}x^{n-1}+c_{2}x^{n-2}+\dots+c_{n}
\]
 where the $c_{i}$ are given recursively by $c_{1}=-t_{1}$ and 
\[
-kc_{k}=t_{k}+\sum_{i=1}^{k-1}c_{i}t_{k-i}.
\]
\end{lem}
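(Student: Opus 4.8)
The plan is to reduce the statement to the classical Newton relations between power sums and elementary symmetric functions, and then to establish those relations by a logarithmic-derivative (generating-function) argument.

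First I would pass to an algebraic closure of the ground field and put $T$ into upper-triangular form (by Schur's theorem, or simply via the Jordan form over $\overline{\qq}$ or $\cc$). Writing $\lambda_{1},\dots,\lambda_{n}$ for the diagonal entries, i.e. the eigenvalues of $T$ listed with multiplicity, the power $T^{i}$ is again triangular with diagonal $\lambda_{1}^{i},\dots,\lambda_{n}^{i}$, so $t_{i}=\Tr(T^{i})=\sum_{j=1}^{n}\lambda_{j}^{i}$, while $f_{T}(x)=\det(x\,\id-T)=\prod_{j=1}^{n}(x-\lambda_{j})$. Since the claimed recursion and these expressions are all polynomial identities in the entries of $T$, it suffices to verify them over $\cc$; the conclusion then holds over any field. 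With this reduction the statement becomes purely symmetric-function theoretic: writing $f_{T}(x)=\sum_{j=0}^{n}c_{j}x^{n-j}$ with $c_{0}=1$, I must show the $c_{k}$ satisfy the stated recursion with $t_{k}=\sum_{j}\lambda_{j}^{k}$.

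To prove it I would take the logarithmic derivative of $f_{T}(x)=\prod_{j}(x-\lambda_{j})$, obtaining $f_{T}'(x)/f_{T}(x)=\sum_{j=1}^{n}(x-\lambda_{j})^{-1}$. Expanding each summand as the formal Laurent series $(x-\lambda_{j})^{-1}=\sum_{m\ge0}\lambda_{j}^{m}x^{-m-1}$ and summing over $j$ gives $f_{T}'(x)/f_{T}(x)=\sum_{m\ge0}t_{m}x^{-m-1}$ with the convention $t_{0}=n$, hence $f_{T}'(x)=f_{T}(x)\sum_{m\ge0}t_{m}x^{-m-1}$. The final step is to compare coefficients of $x^{n-k-1}$. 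On the left, $f_{T}'(x)$ contributes $(n-k)c_{k}$. On the right, the product contributes $\sum_{j=0}^{k}c_{j}t_{k-j}=t_{k}+\sum_{j=1}^{k-1}c_{j}t_{k-j}+n\,c_{k}$, after separating the $j=0$ term (using $c_{0}=1$) and the $j=k$ term (which equals $n\,c_{k}$ since $t_{0}=n$). Equating the two expressions yields $(n-k)c_{k}=t_{k}+\sum_{j=1}^{k-1}c_{j}t_{k-j}+n\,c_{k}$, which rearranges to exactly $-k\,c_{k}=t_{k}+\sum_{i=1}^{k-1}c_{i}t_{k-i}$, specialising to $c_{1}=-t_{1}$ at $k=1$.

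There is no deep obstacle here: once the reduction to eigenvalues is in place the argument is entirely formal. The only points requiring care are the justification that one may assume $T$ triangular, handled by base-changing to $\overline{\qq}$ or $\cc$ where triangularization is available and the polynomial identity transfers back, and the index bookkeeping in the coefficient extraction, where the $j=0$ and $j=k$ boundary terms of the right-hand sum must be correctly split off so that $c_{0}=1$ and $t_{0}=n$ are applied in the right places.
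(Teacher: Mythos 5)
Your proof is correct, but there is nothing in the paper to compare it against: the paper states this lemma without any proof, quoting it as the classical Newton identity that lets one reconstruct the characteristic polynomial of Frobenius from the traces supplied by point counting. Judged on its own merits, your argument is complete and sound. The reduction to $\cc$ is legitimate, since each $c_{k}$ and each $t_{i}$ is a polynomial with integer coefficients in the entries of $T$, so the claimed recursion is a polynomial identity that holds over every field once it holds over $\cc$ (in fact the paper only needs it over characteristic-zero fields such as $\qq_{\ell}$, where even this step could be skipped by working with the relevant field directly). After triangularization one has $t_{i}=\sum_{j}\lambda_{j}^{i}$ and $f_{T}(x)=\prod_{j}(x-\lambda_{j})$, and your coefficient extraction from $f_{T}'(x)=f_{T}(x)\sum_{m\ge0}t_{m}x^{-m-1}$ is accurate: the coefficient of $x^{n-k-1}$ on the left is $(n-k)c_{k}$, on the right it is $\sum_{j=0}^{k}c_{j}t_{k-j}=t_{k}+\sum_{i=1}^{k-1}c_{i}t_{k-i}+nc_{k}$ (using $c_{0}=1$ and $t_{0}=n$), and subtracting gives exactly $-kc_{k}=t_{k}+\sum_{i=1}^{k-1}c_{i}t_{k-i}$, with $c_{1}=-t_{1}$ as the case $k=1$. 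The only point worth flagging is that the expansion of $(x-\lambda_{j})^{-1}$ should be read formally, as an identity of formal Laurent series in $x^{-1}$ (or as convergent series for $|x|$ large); either reading is standard and causes no difficulty, since both sides of $f_{T}'(x)=f_{T}(x)\sum_{m\ge0}t_{m}x^{-m-1}$ then live in a ring where coefficient comparison is valid.
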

So in theory, since $n=22$ as $X$ is a K3 surface, we can calculate
the characteristic polynomial by counting points over $\ff_{q^{i}}$
for $i=1,\dots,22$. But this is computationally infeasible. To make
the computation more feasible we use the fact that from the Weil conjectures
we have the functional equation
\[
p^{22}f_{q}(x)=\pm x^{22}f_{q}(p^{2}/x).
\]

Second of all, in our cases we have an explicit submodule $M\subseteq\mathrm{NS}X_{\overline{k}}$
of rank $r$, namely the one generated by the lines and conics lying
on $X$. Hence we can calculate the characteristic polynomial $f_{M}(x)$
of Frobenius acting on $M$. Since $f_{M}(x)|f_{q}(x)$, we can compute
two possible polynomials $f_{q,+}(x)$ and $f_{q,-}(x)$ (one for
each possible sign in the functional equation) by counting points
on $X_{k}(\ff_{q^{i}})$ for $i=1,\dots(22-r)/2$. 

Explicitly, suppose $f_{M}(x)=\prod_{j}g_{j}(x)^{e_{j}}$ with $\deg(g_{j})=d_{j}$,
hence $\sum d_{j}e_{j}=r$. Note that $f'_{q}(x)=f_{M}(x)h'(x)+f_{M}'(x)h(x)$,
hence if $e_{j}>1$ then $g_{j}(x)|f'_{q}(x)$, and in general $g_{j}(x)$
divides the $(e_{j}-1)$th derivative of $f_{q}(x)$. Therefore, we
can use the roots of $M$ to construct $r/2$ linear equations in
the $11$ coefficients of $f_{q}(x)$ (by assuming $f_{q}(x)$ satisfies
one of the functional equation). Hence we just need to count points
on $X_{k}(\ff_{q^{i}})$ for $i=1,\dots(22-r)/2$ to be able to use
linear algebra and find the $11$ coefficients of $f_{q}(x)$. Note
that when we assume the negative functional equations, we have in
fact only $10$ coefficients of $f_{q}(x)$, as $c_{11}=0$. Hence,
we end up not using all the information from $f_{M}(x)$, therefore
it is possible to construct $f_{q}(x)$ such that $f_{M}(x)\nmid f_{q}(x)$.
This is a contradiction, meaning that $f_{q}(x)$ satisfies the positive
functional equation and not the negative.

Finally, note that by rescaling $f_{q}(x)$ by $f_{q}(x/p)$, we just
need to count the roots which are also roots of unity.

\subsection{Artin-Tate conjecture \label{subsec:Artin-Tate}}

Unfortunately, as the roots come in conjugate pairs, the above method
can only ever give an even upper bound. The following proposition
can potentially reduce the upper bound by one more than the above
bound.
\begin{prop}
Let $X$ be a \emph{K3} surface defined over a number field $K$ and
let $\mathfrak{p}$ and $\mathfrak{p}'$ be two primes of good reductions.
Suppose that $\rho(\overline{X}_{\mathfrak{p}})=\rho\left(\overline{X}_{\mathfrak{p}'}\right)=n$
but the discriminants $\mathrm{Disc}(\mathrm{NS}(\overline{X}_{\mathfrak{p}}))$
and $\mathrm{Disc}(\mathrm{NS}(\overline{X}_{\mathfrak{p}'}))$ are
different in $\qq^{*}/(\qq^{*})^{2}$. Then $\rho(\overline{X})<n$.
\end{prop}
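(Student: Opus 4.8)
The plan is to argue by contradiction, exploiting the fact that the reduction map of Theorem \ref{thm:Reduction} is not merely an injection of $\qq_{\ell}$-vector spaces but is induced by an injective homomorphism of the integral N\'eron--Severi lattices that respects the intersection pairing. First I would record the trivial half of the statement: for either prime, the injection $\mathrm{NS}(X_{\overline{K}})\otimes\qq_{\ell}\hookrightarrow\mathrm{NS}(\overline{X}_{\mathfrak{p}})\otimes\qq_{\ell}$ of Theorem \ref{thm:Reduction} forces $\rho(\overline{X})\leq\rho(\overline{X}_{\mathfrak{p}})=n$. Thus it suffices to rule out the case $\rho(\overline{X})=n$, and the whole content is to show that equality would force the two reduced discriminants to agree modulo squares.

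Assume then that $\rho(\overline{X})=n$. Since for a K3 surface $\mathrm{Pic}=\mathrm{NS}$, the group $\mathrm{NS}(X_{\overline{K}})$ is free of rank $n$ and carries the intersection form, and the specialization homomorphism $\mathrm{NS}(X_{\overline{K}})\to\mathrm{NS}(\overline{X}_{\mathfrak{p}})$ is injective and preserves intersection numbers, because intersection multiplicities are preserved under specialization at a prime of good reduction. As both lattices then have rank $n$, the image is a full-rank sublattice, so $\mathrm{NS}(\overline{X}_{\mathfrak{p}})$ is an overlattice of (the image of) $\mathrm{NS}(X_{\overline{K}})$. By the overlattice discriminant relation recorded in Section \ref{sec:Lattices}, we obtain $\mathrm{Disc}(\mathrm{NS}(X_{\overline{K}}))=[\mathrm{NS}(\overline{X}_{\mathfrak{p}}):\mathrm{NS}(X_{\overline{K}})]^{2}\cdot\mathrm{Disc}(\mathrm{NS}(\overline{X}_{\mathfrak{p}}))$, so these two discriminants agree in $\qq^{*}/(\qq^{*})^{2}$. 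The identical argument applied at $\mathfrak{p}'$ gives $\mathrm{Disc}(\mathrm{NS}(X_{\overline{K}}))\equiv\mathrm{Disc}(\mathrm{NS}(\overline{X}_{\mathfrak{p}'}))$ in $\qq^{*}/(\qq^{*})^{2}$.

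Combining the two congruences through the common generic discriminant $\mathrm{Disc}(\mathrm{NS}(X_{\overline{K}}))$ yields $\mathrm{Disc}(\mathrm{NS}(\overline{X}_{\mathfrak{p}}))\equiv\mathrm{Disc}(\mathrm{NS}(\overline{X}_{\mathfrak{p}'}))$ in $\qq^{*}/(\qq^{*})^{2}$, contradicting the hypothesis that these differ in $\qq^{*}/(\qq^{*})^{2}$. Hence $\rho(\overline{X})<n$. The one point that needs genuine care --- and essentially the only obstacle --- is the assertion that specialization is injective and compatible with the intersection form on the \emph{integral} N\'eron--Severi groups, not merely after $\otimes\qq_{\ell}$; this is the substance underlying Theorem \ref{thm:Reduction} (following van Luijk), and once it is granted the remainder is the purely lattice-theoretic discriminant bookkeeping from Section \ref{sec:Lattices}.
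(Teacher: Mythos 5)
Your proof is correct and follows essentially the same route as the paper: bound $\rho(\overline{X})\leq n$ by specialization, then note that equality would make $\mathrm{NS}(\overline{X})$ a full-rank sublattice of both $\mathrm{NS}(\overline{X}_{\mathfrak{p}})$ and $\mathrm{NS}(\overline{X}_{\mathfrak{p}'})$, forcing all three discriminants to agree in $\qq^{*}/(\qq^{*})^{2}$ via the overlattice index relation, contradicting the hypothesis. The only difference is that you spell out the lattice-theoretic bookkeeping (the index-squared formula and the compatibility of specialization with the intersection pairing) that the paper leaves implicit.
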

\begin{proof}
By the above, we know that $\rho(\overline{X})\leq n$. If $\rho(\overline{X})=n$,
then $\mathrm{NS}(\overline{X})$ is a full rank sublattice of $\mathrm{NS}(\overline{X}_{\mathfrak{p}})$
and $\mathrm{NS}(\overline{X}_{\mathfrak{p}'})$. But in that case,
as elements of $\qq^{*}/(\qq^{*})^{2}$, all three discriminants should
be equal, which is a contradiction to the hypothesis.
\end{proof}
As the proposition requires us to calculate the discriminants of $\mathrm{NS}(\overline{X}_{\mathfrak{p}})$
and $\mathrm{NS}(\overline{X}_{\mathfrak{p}'})$ we use the following
conjecture:
\begin{conjecture}[Artin - Tate]
 Let $X$ be a \emph{K3} surface over a finite field $\ff_{q}$.
Let $\rho$ and $\mathrm{Disc}$ denote respectively the rank and
discriminant of the Picard group defined over $\ff_{q}$. Then
\[
\left|\mathrm{Disc}\right|=\frac{\lim_{T\to q}\frac{\Phi(T)}{(T-q)^{\rho}}}{q^{21-\rho}\#\mathrm{Br}(X)}.
\]
Here $\Phi$ is the characteristic polynomial of $\mathrm{Frob}$
on $H_{\mathrm{\acute{e}t}}^{2}(X_{\overline{\ff}_{q}},\qq_{l})$.
Finally, $\mathrm{Br}(X)$ is the Brauer group of $X$. 
\end{conjecture}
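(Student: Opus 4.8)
The plan is to deduce the identity from the Tate conjecture for divisors together with finiteness of the Brauer group, both of which are now theorems for K3 surfaces. The starting point is the general Artin--Tate formula for a smooth projective geometrically connected surface $X/\ff_{q}$: writing $P_{2}(T)=\det\!\left(\id-\mathrm{Frob}\cdot T\mid H^{2}_{\mathrm{\acute{e}t}}(X_{\overline{\ff}_{q}},\qq_{\ell})\right)$, it predicts that $P_{2}$ vanishes to order $\rho$ at $T=q^{-1}$ and that
\[
\lim_{T\to q^{-1}}\frac{P_{2}(T)}{(1-qT)^{\rho}}=\pm\frac{\#\Br(X)\cdot\left|\mathrm{Disc}(\mathrm{NS}(X))\right|}{q^{\alpha(X)}\left(\#\mathrm{NS}(X)_{\mathrm{tors}}\right)^{2}},
\]
where $\alpha(X)=\chi(X,\mathcal{O}_{X})-1+\dim\mathrm{Pic}^{0}(X)$.

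The first step is purely numerical bookkeeping, specialising this to the K3 setting. For a K3 surface one has $\chi(X,\mathcal{O}_{X})=2$, the Picard scheme is zero-dimensional so $\dim\mathrm{Pic}^{0}(X)=0$, and $\mathrm{NS}(X)$ is torsion free; hence $\alpha(X)=1$ and the torsion factor is trivial. It then remains to pass from $P_{2}$ to the monic polynomial $\Phi$ used in the statement, which are related by $\Phi(T)=T^{22}P_{2}(1/T)$ since $b_{2}=22$. Substituting $S=1/T$ and using that the $22-\rho$ eigenvalues of Frobenius distinct from $q$ each have absolute value $q$ converts the limit at $T=q^{-1}$ into the limit at $T=q$ and produces a factor $q^{22-\rho}$; dividing by $q^{\alpha(X)}=q$ leaves exactly the factor $q^{21-\rho}$ appearing in the denominator of the claimed formula. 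The $\pm$ sign disappears on taking $\left|\mathrm{Disc}\right|$.

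The second, genuinely deep, step is to justify that the formula holds at all, that is, that $P_{2}$ vanishes to order exactly $\rho$ at $T=q^{-1}$ and that $\Br(X)$ is finite. Artin and Tate proved these two statements to be \emph{equivalent} to the displayed equality, so the whole problem reduces to the Tate conjecture for $X$ plus finiteness of the Brauer group. For K3 surfaces over a finite field both are known: the Tate conjecture for K3 surfaces has been established in every characteristic, building on the Kuga--Satake construction and crystalline deformation theory, and finiteness of $\Br(X)$ is a consequence. Invoking these results discharges the hypotheses and yields the formula.

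The main obstacle is therefore not the elementary rearrangement of the first step but the Tate conjecture itself, which is the only nontrivial ingredient and a substantial theorem in its own right. In this paper the identity is needed only as a computational device, to read off $\left|\mathrm{Disc}(\mathrm{NS}(\overline{X}_{\mathfrak{p}}))\right|$ in $\qq^{*}/(\qq^{*})^{2}$ from a point count on $X_{\ff_{q}}$ and feed it into the preceding proposition; accordingly we would take the Tate conjecture for K3 surfaces as an external input rather than reproving it here.
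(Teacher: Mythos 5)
Your proposal coincides with the paper's own justification: the paper states the formula as a conjecture and immediately notes that for odd $q$ it is a theorem, obtained exactly as you describe, by combining Milne's result that the Tate conjecture implies the Artin--Tate formula with the established proofs of the Tate conjecture for K3 surfaces (your first step, specialising $\alpha(X)=1$, trivial torsion, and converting $P_{2}(T)$ to the monic $\Phi(T)$ to produce the $q^{21-\rho}$ factor, is routine bookkeeping the paper leaves implicit). The one discrepancy is that you assert the result in every characteristic, while the paper restricts the claim to odd $q$ (the only case it uses), which was the safe statement at the time of writing since the characteristic-$2$ case of both the Tate conjecture for K3 surfaces and the Tate$\Rightarrow$Artin--Tate implication was settled only later.
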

In the case when $q$ is odd, then the above conjecture has been proven
to be true (using the fact that it follows from the Tate Conjecture
\cite{MR0414558} which has been proven for K3 surfaces \cite{MR723215,MR819555,MR3103257,MR3265555,pera2013tate}).
Furthermore in the case the conjecture is true we have that $\#\Br(X)$
is a square. Hence, by picking $q$ large enough so that $\rho(X_{q})=\rho(\overline{X}_{q})$,
we can find $\left|\mathrm{Disc}\right|$ as an element of $\qq^{*}/\left(\qq^{*}\right)^{2}$.
\selectlanguage{english}%

\subsection{Only finitely many singular K3 surfaces \label{subsec:rank 19}}

\selectlanguage{british}%
Suppose that a general member of the family $\mathcal{Y}$ has Picard
rank at least $19$ and the family $\mathcal{Y}$ is parameterised
by a one dimensional curve. The third idea uses the fact that, up
to $\overline{\qq}$-isomorphism, there only finitely many K3 surfaces
over $\qq$ which are singular, i.e., with Picard rank $20$. Hence
if a very general member of $\mathcal{Y}$ has Picard rank $20$,
then every member of $\mathcal{Y}$ is singular. Therefore $\mathcal{Y}$
parametrises a set of isomorphic surfaces. If we can show that there
are two $\qq$-surfaces in $\mathcal{Y}$ which are not isomorphic,
then a very general member of the family $\mathcal{Y}$ has Picard
rank at most $19$ (as it can not be $20$).

We implement this by noting that in each of the cases we are interested
in, the Fermat quartic, $F_{4}$, belongs to our family $\mathcal{Y}$.
Furthermore the Fermat quartic is supersingular over algebraically
closed fields of characteristic $3$ mod $4$, i.e., $\rho\left(\overline{F_{4,p}}\right)=22$
for $p\equiv3\mod4$ \cite{MR0225778}. Hence if there is another
surface in $\mathcal{Y}$ with $\rho\left(\overline{X}_{p}\right)=20$
over a prime $p\equiv3\mod4$, then $F_{4}$ and $X$ are not isomorphic
(since their specialisations to the field $\mathrm{\ff}_{p}$ are
not isomorphic, as they have different Picard rank).

With all these tools we tackle the following proposition:
\begin{prop}
~\label{prop:Picard Rank}

\begin{itemize}
\item A very general surface in the family $\mathcal{X}$ has Picard rank
$16$,
\item A very general surface in the family $\mathcal{X}_{C,D,E}$ has Picard
rank $17$,
\item A very general surface in the family $\mathcal{X}_{C,D}$ has Picard
rank $18$,
\item A very general surface in the family $\mathcal{X}_{B}$ has Picard
rank $19$,
\item A very general surface in the family $\mathcal{X}_{C}$ has Picard
rank $19$,
\item The surface $Y$ is singular.
\end{itemize}
\end{prop}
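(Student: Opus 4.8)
The plan is to bound the Picard rank of each family both from below and from above, and then match the two bounds. For the lower bound I would, in every case, take the $320$ conics supplied by Theorem~\ref{thm:320-Conics} together with the $8m$ explicit lines produced by Proposition~\ref{prop:Conatins 8 Lines} and Lemma~\ref{lem:Number of lines}, all regarded as classes in $\mathrm{Pic}(X)$. Computing their pairwise intersection numbers gives an integer symmetric matrix whose rank is a lower bound for $\rho$. I expect this matrix to have rank $16,17,18,19,19$ and $20$ respectively for $\mathcal{X},\mathcal{X}_{C,D,E},\mathcal{X}_{C,D},\mathcal{X}_{B},\mathcal{X}_{C}$ and $Y$, which already supplies the desired lower bounds; this is a finite linear-algebra computation over $\zz$ once the intersection numbers of the lines and conics are known.

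For the upper bounds I would first apply the reduction method of Subsection~\ref{subsec:Reduction}. Choosing a prime $\mathfrak{p}$ of good reduction and counting points on $X_{\ff_{q^{i}}}$, I would reconstruct the characteristic polynomial $f_{q}$ of Frobenius on $H^{2}_{\mathrm{\acute{e}t}}$ and count the eigenvalues $\lambda$ with $\lambda/q$ a root of unity; by Theorem~\ref{thm:Reduction} this number bounds $\rho(\overline{X})$ from above. To keep the point counting feasible I would exploit $f_{M}\mid f_{q}$, where $f_{M}$ is the Frobenius polynomial on the lines-and-conics sublattice, together with the Weil functional equation, so that only the counts $i=1,\dots,(22-r)/2$ are needed. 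For $\mathcal{X}$ and $\mathcal{X}_{C,D}$ this should already yield the even bounds $16$ and $18$, matching the lower bounds.

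Because the eigenvalue count only ever produces an even bound, the odd-rank statements require an extra step. For $\mathcal{X}_{C,D,E}$ the reduction gives $\rho\le 18$, and I would then invoke the Artin--Tate machinery of Subsection~\ref{subsec:Artin-Tate}: computing $\lvert\mathrm{Disc}\rvert$ modulo squares at two good primes $\mathfrak{p},\mathfrak{p}'$ and exhibiting two different classes in $\qq^{*}/(\qq^{*})^{2}$ forces $\rho(\overline{X})<18$, hence $\rho=17$. For the one-dimensional families $\mathcal{X}_{B}$ and $\mathcal{X}_{C}$, where the lower bound is already $19$, I would instead rule out rank $20$ by the argument of Subsection~\ref{subsec:rank 19}: if the very general member were singular, then finiteness of singular K3 surfaces over $\qq$ up to $\overline{\qq}$-isomorphism would force the whole family to consist of a single isomorphism class, but $F_{4}$ lies in each family (at $C=0$, respectively $B=0$) and is supersingular of rank $22$ modulo every prime $p\equiv 3\bmod 4$, so it suffices to exhibit one further member whose reduction at such a prime has rank strictly below $22$; the two members then reduce to non-isomorphic surfaces, a contradiction. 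Finally, for the single surface $Y$ the lower bound gives $\rho\ge 20$, and since a K3 surface over a field of characteristic zero satisfies $\rho\le 20$, this already proves that $Y$ is singular.

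The main obstacle I anticipate is producing the odd upper bounds. The pure Frobenius eigenvalue count is structurally blind to the parity-breaking needed for ranks $17$ and $19$, so the real content lies in (i) the Artin--Tate discriminant comparison for $\mathcal{X}_{C,D,E}$, where I must locate two primes whose N\'eron--Severi discriminants genuinely differ in $\qq^{*}/(\qq^{*})^{2}$, and (ii) finding, for $\mathcal{X}_{B}$ and $\mathcal{X}_{C}$, an explicit non-supersingular member at a prime $p\equiv 3\bmod 4$, which itself requires a successful reduction computation bounding that member's rank below $22$. A secondary concern is the sheer computational feasibility of the point counts, which the factorisation $f_{M}\mid f_{q}$ and the functional equation are precisely designed to control.
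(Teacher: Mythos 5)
Your proposal follows essentially the same route as the paper: lower bounds from the rank of the lines-and-conics intersection matrix, even upper bounds via van Luijk reduction with the $f_{M}\mid f_{q}$ and functional-equation shortcuts, the Artin--Tate discriminant comparison at two primes for $\mathcal{X}_{C,D,E}$, the ``only finitely many singular K3s'' argument with the supersingular Fermat quartic at $p\equiv 3\bmod 4$ for $\mathcal{X}_{B}$ and $\mathcal{X}_{C}$, and $\rho\le 20$ in characteristic zero for $Y$. The only (practical) point you gloss over is that the paper computes the intersection matrices after reduction to $\ff_{p^{2}}$ --- since the lines and conics live in a degree $2^{10}$ extension of $\qq$ --- and then uses the injectivity of Theorem \ref{thm:Reduction} to transfer the result back to characteristic zero.
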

\begin{proof}
To get the lower bound we want to calculate the intersection matrix
of the conics and lines lying on a very general member of each family.
The lines and conics are defined over a degree $2^{10}$ field extension,
hence calculating the intersection matrix is computationally infeasible.
Instead we do the calculations over finite fields. Pick $X$ in one
of the families (call it $\mathcal{X}$) and let $p$ be a prime of
good reduction. Then we know that the conics and lines of $X_{\ff_{p}}$
are defined over $\ff_{p^{2}}$ (due to having explicit equations
and there are only two square classes in $\ff_{p}$) and so we calculate
with ease the intersection matrix. By Theorem \ref{thm:Reduction}
$\mathrm{NS}(X_{\overline{\qq}})\otimes\qq_{\ell}\hookrightarrow\mathrm{NS}(X_{\overline{\ff_{p}}})\otimes\qq_{\ell}$
is injective, so the intersection matrix of the lines and conics over
$\ff_{p^{2}}$ is the same as the intersection matrix of the lines
and conics over $\overline{\qq}$. Furthermore, as the set of surfaces
in $\mathcal{X}_{*}$ which reduce to $X_{\ff_{p}}$ is Zariski open,
the intersection matrix calculated is the same as the intersection
matrix of a very general member of $\mathcal{X}_{*}$.

As the intersection matrix is a large matrix, we have included in
Appendix \ref{sec:Matrices} a full rank minor of the matrix for each
family (in particular, the lower bound is the dimension of said minor).
We work (see \cite{ThesisCode}) through the families in reverse order
from the list above.

\begin{itemize}
\item As $M_{Y}$ has rank $20$, we know that $\rho(Y)=20$ and hence the
surface $Y$ is singular.
\item As $M_{C}$ has rank $19$, we know that a very general surface $X_{C}$
of $\mathcal{X}_{C}$ has $\rho(X_{C})\geq19$. Using the idea in
Subsection~\ref{subsec:rank 19} we see that the surface $X_{0}$,
associated to the point $[1,0,0,0,0]$, is the Fermat quartic so it
is supersingular over $\overline{\ff}_{19}$. On the other hand consider
the surface $X_{5}$, associated to the point $[1,0,5,0,0]$, over
$\ff_{19}$. The characteristic polynomial of Frobenius acting on
conics and lines on $X_{5}$ is $f(x)=(x-1)^{10}(x+1)^{9}$. Hence
we just need to count points over $\ff_{19}$ and $\ff_{19^{2}}$
to find the two possible characteristic polynomials for $\Phi_{19}^{*}$.
We find, after rescaling, $f_{19,+}(x)=\frac{1}{19}(x-1)^{10}(x+1)^{10}(19x^{2}-22x+19)$
and a contradiction for $f_{19,-}(x)=\frac{1}{19}(x-1)^{9}(x+1)^{9}(19x^{4}-22x^{3}-22x+19)$
as $f(x)\nmid f_{19,-}(x)$. As $X_{2}$ is not supersingular, $X_{0}$
and $X_{2}$ are not isomorphic over $\overline{\ff}_{19}$. Therefore
a very general surface in $\mathcal{X}_{C}$ has Picard number $19$.
\item As $M_{B}$ has rank $19$, we know that a very general surface $X_{B}$
of $\mathcal{X}_{B}$ has $\rho(X_{B})\geq19$. Using the idea in
Subsection~\ref{subsec:rank 19} we see that $X_{2}$, associated
to the point $[1,0,0,0,0]$, is the Fermat quartic so is supersingular
over $\overline{\ff}_{19}$. On the other hand consider the surface
$X_{1}$, associated to the point $[1,1,1,1,1]$, over $\ff_{19}$.
The characteristic polynomial of Frobenius acting on conics and lines
on $X_{1}$ is $f(x)=(x-1)^{16}(x+1)^{3}$. After point counting over
$\ff_{19}$ and $\ff_{19^{2}}$ we find the possible two characteristic
polynomials for $\Phi_{19}^{*}$, namely $f_{19,+}(x)=\frac{1}{19}(x-1)^{16}(x+1)^{4}(19x^{2}-18x+19)$
and a contradiction for $f_{19,-}(x)=\frac{1}{19}(x-1)^{15}(x+1)^{3}(19x^{4}-18x^{3}-18x+19)$.
As $X_{1}$ is not supersingular, $X_{2}$ and $X_{1}$ are not isomorphic
over $\overline{\ff}_{19}$. Therefore a very general surface in $\mathcal{X}_{B}$
has Picard number $19$.
\item As $M_{C,D}$ has rank $18$, we know that a very general surface
$X_{C,D}$ of $\mathcal{X}_{C,D}$ has $\rho(X_{C,D})\geq18$. We
use the idea in Subsection~\ref{subsec:Reduction} and find a surface
whose reduction at a prime $p$ gives an upper bound of $18$. To
make point counting easier, we will work over $\ff_{13}$ and the
surface $X_{4,1}$, associated to the point $[1,0,4,1,8]$. Our first
step is to calculate the characteristic polynomial of Frobenius acting
on conics and lines, which is $f_{}(x)=(x-1)^{10}(x+1)^{8}$. After
point counting over $\ff_{13}$ and $\ff_{13^{2}}$ we find the two
possible characteristic polynomials for $\Phi_{13}^{*}$, namely $f_{13,+}(x)=\frac{1}{13}(x-1)^{10}(x+1)^{8}(13t^{4}+12t^{3}+14t^{2}+12t+13)$
and a contradiction for $f_{13,-}(x)=\frac{1}{13}(x-1)^{9}(x+1)^{9}(13t^{4}-14t^{3}+16t^{2}-14t+13)$
(since $f(x)\nmid f_{13,-}(x)$). Hence $\rho(\overline{X}_{4,1})\leq18$,
so a very general surface in $\mathcal{X}_{C,D}$ has Picard number
$18$.
\item As $M_{C,D,E}$ has rank $17$, we know that a very general surface
$X_{C,D,E}$ of $\mathcal{X}_{C,D,E}$ has $\rho(X_{C,D,E})\geq17$.
We use the idea in Subsection~\ref{subsec:Reduction} and find a
surface whose reduction at two primes $p$ and $p'$ gives an upper
bound of $18$. We work with the surface $X_{3,5,7}$, associated
to the point $[1,29,3,5,7]$, over the fields $\ff_{13}$ and $\ff_{19}$.
The characteristic polynomial of Frobenius acting on conics and lines
over $\ff_{13}$ is $f_{13}(x)=(x-1)^{8}(x+1)^{9}$ and over $\ff_{19}$
is $f_{19}(x)=(x-1)^{9}(x+1)^{8}$. We find the following possible
characteristic polynomials (after rescaling):

\begin{center}
\begin{tabular}{|c|>{\centering}p{5.5cm}|>{\centering}p{6cm}|}
\cline{2-3} 
\multicolumn{1}{c|}{} & $f_{+}$ & $f_{-}$\tabularnewline
\hline 
$\ff_{13}$ & $\frac{1}{13}(x-1)^{8}(x+1)^{10}(13x^{4}+22x^{2}+13)$ & $\frac{1}{13}(x-1)^{9}(x+1)^{9}(13x^{4}+26x^{3}+48x^{2}+26x+13)$\tabularnewline
\hline 
$\ff_{19}$ & $\frac{1}{19}(x-1)^{10}(x+1)^{8}(19x^{4}+32x^{3}+42x^{2}+32x+19)$ & $\frac{1}{19}(x-1)^{9}(x+1)^{9}(19x^{4}-6x^{3}+16x^{2}-6x+19)$\tabularnewline
\hline 
\end{tabular}
\par\end{center}

\noindent We then apply the idea in Subsection~\ref{subsec:Artin-Tate},
by working over $\ff_{13^{2}}$ and $\ff_{19^{2}}$. We find that,
up to squares, the discriminants are as follow:

\begin{center}
\begin{tabular}{|c|c|c|}
\cline{2-3} 
\multicolumn{1}{c|}{} & $\left|\mathrm{Disc}_{+}\right|$ & $\left|\mathrm{Disc}_{-}\right|$\tabularnewline
\hline 
$\ff_{13^{2}}$ & $13$ & $13\cdot17\cdot61$\tabularnewline
\hline 
$\ff_{19^{2}}$ & $18691$ & $75011$\tabularnewline
\hline 
\end{tabular}
\par\end{center}

\noindent As these four discriminants are all different elements in
$\qq^{*}/(\qq^{*})^{2}$ we have $\mathrm{Disc}\left(\mathrm{NS}(X_{3,5,7,\ff_{13^{2}}})\right)\neq\mathrm{Disc}\left(\mathrm{NS}(X_{3,5,7,\ff_{19^{2}}})\right)$
and so a very general surface in $\mathcal{X}_{C,D,E}$ has Picard
number $17$. 
\item As $M$ has rank $16$, we know a very general surface $X$ of $\mathcal{X}$
has $\rho(X)\geq16$. We use the idea in Subsection~\ref{subsec:Reduction}
and find a surface whose reduction at a prime $p$ gives an upper
bound of $16$. We work over $\ff_{19}$ and let $X$ be the surface
defined by the point $[1,2,7,11,13]$. We calculate that the characteristic
polynomial of Frobenius acting on conics and lines is $f(x)=(x-1)^{8}(x+1)^{8}$,
hence we need to count points over $\ff_{19}$, $\ff_{19^{2}}$ and
$\ff_{19^{3}}$ to find the two possible characteristic polynomials
for $\Phi_{19}^{*}$. We find that, after rescaling 
\[
f_{19,+}(x)=\frac{1}{19}(x-1)^{8}(x+1)^{8}(19x^{6}+10x^{5}+29x^{4}+12x^{3}+29x^{2}+10x+19),
\]
 and a contradiction for 
\[
f_{19,-}(x)=\frac{1}{19}(x-1)^{7}(x+1)^{9}(19x^{6}-28x^{5}+47x^{4}-64x^{3}+47x^{2}-28x+19),
\]
as $f(x)\nmid f_{19,-}(x)$. Hence a very general surface in $\mathcal{X}$
has Picard number $16$. 
\end{itemize}
\end{proof}
Now that we know the rank of the Picard group of a very general member
of each family, we can prove the following proposition:
\begin{prop}
\label{prop:Generators}For a very general member of the families
$\mathcal{X},\mathcal{X}_{C,D,E},\mathcal{X}_{C,D},\mathcal{X}_{B}$
and $\mathcal{X}_{C}$, as well as the Fermat quartic, $F_{4}$, the
Picard groups are generated by lines and conics. 

In particular the matrices $M,M_{C,D,E},M_{C,D},M_{B}$ and $M_{C}$
as defined in Appendix\ref{sec:Matrices} define the Picard group
of a very general member of the families $\mathcal{X},\mathcal{X}_{C,D,E},\mathcal{X}_{C,D},\mathcal{X}_{B}$
and $\mathcal{X}_{C}$ respectively.
\end{prop}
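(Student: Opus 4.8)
The plan is to leverage what we already know: by Proposition \ref{prop:Picard Rank} the rank $\rho$ of $\mathrm{Pic}(X)$ for a very general member of each family is known, and the matrices $M,M_{C,D,E},\dots$ record the Gram matrix of the sublattice $L\subseteq\mathrm{Pic}(X)$ spanned by the lines and conics, which has full rank $\rho$. Thus $L$ is a finite-index full-rank sublattice of $\mathrm{Pic}(X)$, and the entire content of the proposition is that the index $[\mathrm{Pic}(X):L]$ equals $1$. Since $\mathrm{Pic}(X)$ of a K3 surface is even (it embeds into the even lattice $H^{2}$), and since $L$ is even (the lines and conics are $(-2)$-classes, so every self-intersection is even), $\mathrm{Pic}(X)$ must be an even overlattice of $L$.

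First I would reduce everything to a purely lattice-theoretic computation via Nikulin's Theorem \ref{thm:Niikulin-overlattice}: even overlattices of $L$ correspond bijectively to isotropic subgroups $G$ of the discriminant form $(A_{L},q_{L})$, with $[\mathrm{Pic}(X):L]=|G|$ and $\mathrm{Disc}(\mathrm{Pic}(X))=\mathrm{Disc}(L)/|G|^{2}$. Hence it suffices to show that $(A_{L},q_{L})$ carries no nonzero isotropic subgroup, equivalently (since any element $x$ with $q_{L}(x)=0$ generates an isotropic cyclic subgroup, because $q_{L}(kx)=k^{2}q_{L}(x)=0$) that $q_{L}$ admits no nonzero isotropic element. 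From each Gram matrix I would compute $\mathrm{Disc}(L)=\det M$, the group $A_{L}=L^{*}/L$, and the form $q_{L}$ in Magma. Because $|G|^{2}\mid\mathrm{Disc}(L)$, only the finitely many primes $\ell$ with $\ell^{2}\mid\mathrm{Disc}(L)$ can produce a nontrivial overlattice, so the search for isotropic elements is confined to the $\ell$-primary parts of $A_{L}$ for those $\ell$. Whenever no such isotropic element exists, $L$ has no proper even overlattice, and since $\mathrm{Pic}(X)$ is one, we conclude $\mathrm{Pic}(X)=L$ for that family. For the Fermat quartic $F_{4}$ the statement that its $48$ lines generate $\mathrm{Pic}(F_{4})$ is classical and was already quoted in the Remark following the summarised theorem, so nothing new is needed there.

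The hard part will be those families (if any) for which $(A_{L},q_{L})$ does carry a nonzero isotropic element, so that $L$ admits a genuine even overlattice $L_{G}$ and lattice theory alone cannot decide whether $\mathrm{Pic}(X)=L$ or $\mathrm{Pic}(X)=L_{G}$. It is worth stressing that the discriminant information supplied by the Artin--Tate conjecture (Subsection \ref{subsec:Artin-Tate}) is powerless here: it pins down $|\mathrm{Disc}|$ only in $\qq^{*}/(\qq^{*})^{2}$, while the index $[\mathrm{Pic}(X):L]$ enters the discriminant precisely as a square and is therefore invisible to the square class. To break such a tie one must bring in finer input: either verify over a prime $\mathfrak{p}$ of good reduction with $\rho(X_{\overline{\ff}_{p}})$ still equal to $\rho$ that no class in $\tfrac{1}{\ell}L$ is realised by an actual divisor, by computing $\mathrm{NS}(X_{\overline{\ff}_{p}})$ from the explicit lines, conics and the Frobenius data of Proposition \ref{prop:Picard Rank}; or show that the candidate overlattice $L_{G}$ fails to admit a primitive embedding into the K3 lattice $U^{\oplus3}\oplus E_{8}\langle-1\rangle^{\oplus2}$, analysing its transcendental complement of rank $22-\rho$ with Theorems \ref{thm:Nikulin-Iso} and \ref{thm:Nikulin-E8-U}. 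In practice I expect the discriminant forms computed from $M,M_{C,D,E},\dots$ to be isotropic-free, so that the clean case applies throughout and the whole proposition reduces to the finite Magma check described above, with the Artin--Tate computations retained only as an independent consistency check on $\mathrm{Disc}(L)$.
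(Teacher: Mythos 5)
Your reduction of the problem to Nikulin's overlattice correspondence is set up correctly, but the proposal collapses at its central step: the expectation that the discriminant forms of $M,M_{C,D,E},M_{C,D},M_{B},M_{C}$ are isotropic-free is false. The paper's own computations exhibit the failure explicitly: for $M_{C}$ the class $\frac{1}{2}(e_{4}+e_{5}+e_{10}+e_{11}+e_{13}+e_{14})$ is a nonzero element of $A_{L}$ of order two with $q_{L}=0$, and it generates an isotropic subgroup yielding a genuine index-two even overlattice (this overlattice, $E_{8}\left\langle -1\right\rangle \oplus U\oplus D_{8}\left\langle -1\right\rangle \oplus\left\langle -8\right\rangle $, is exactly what the paper uses in its ``Method'' subsection to identify the lattice). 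The same happens for $M$: its lattice is isomorphic to $E_{8}\left\langle -1\right\rangle \oplus U\oplus D_{5}\left\langle -2\right\rangle \oplus\left\langle -4\right\rangle $, and already in the summand $D_{5}\left\langle -2\right\rangle $ the dual class $\frac{1}{2}(1,1,1,1,0)$ has order two and norm $-2\equiv0\bmod2\zz$. So every case lands in what you call the ``hard part'', and pure lattice theory cannot decide whether $[\mathrm{Pic}(X):L]$ is $1$ or $2$. Your fallbacks do not close this gap as described. For the odd-rank families ($\mathcal{X}_{C,D,E},\mathcal{X}_{B},\mathcal{X}_{C}$ with $\rho=17,19,19$) there is \emph{no} prime of good reduction with $\rho(X_{\overline{\ff}_{p}})=\rho$, because the geometric Picard number of a K3 surface over a finite field is even (this is precisely why the paper needs the Artin--Tate and rank-$19$ tricks); and even when parities allow it, Frobenius data yields the rank and the square class of $\mathrm{Disc}(\mathrm{NS}(X_{\overline{\ff}_{p}}))$, not the lattice itself, so an index-two jump is exactly as invisible there as you correctly observed it is for Artin--Tate. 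The embedding obstruction also gives nothing: the index-two overlattice of $M_{C}$ above has signature $(1,18)$ and $\ell(A)=3$, and there is no lattice-theoretic obstruction to it being a K3 Picard lattice, so no contradiction can arise that way.

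The missing ingredient is geometric, and it is what the paper actually uses: specialisation to the Fermat quartic. Each family contains $F_{4}$, whose Picard group is classically generated by its $48$ lines; the paper first checks that $16$ conics and $4$ lines span a sublattice of discriminant $-64=\mathrm{Disc}(\mathrm{Pic}(F_{4}))$, hence all of $\mathrm{Pic}(F_{4})$. If $\mathcal{Y}$ is a subfamily of a family $\mathcal{Z}$ with very general members $Y,Z$, then $\mathrm{Pic}(Z)\hookrightarrow\mathrm{Pic}(Y)$ by specialisation. The lines and conics generating $M_{C}$ specialise to generators of a minor of $M_{F_{4}}$, and one verifies computationally that they span a \emph{primitive} sublattice of $\mathrm{Pic}(F_{4})$; since no overlattice of a primitive sublattice can embed in $\mathrm{Pic}(F_{4})$, and any proper $\mathrm{Pic}(X_{C})\supsetneq L_{C}$ would have to so embed, one concludes $\mathrm{Pic}(X_{C})=L_{C}$. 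Running this argument down the chain $F_{4}\to\mathcal{X}_{C}\to\mathcal{X}_{C,D}\to\mathcal{X}_{C,D,E}\to\mathcal{X}$, and separately $F_{4}\to\mathcal{X}_{B}$, proves the proposition. This specialisation-plus-primitivity mechanism is entirely absent from your proposal, and without it (or the divisor-theoretic alternative of ruling out effectivity of the candidate half-classes) the statement cannot be reached.
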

\begin{proof}
First note that if $L_{1}\hookrightarrow L_{2}$ is primitive, then
no overlattice $L'$ of $L_{1}$ can be a sublattice of $L_{2}$.
Let $\mathcal{X}$ and $\mathcal{Y}$ are two families of K3 surfaces,
with $\mathcal{Y}$ a subfamily of $\mathcal{X}$. If $X$ and $Y$
denote a very general member of $\mathcal{X}$ and $\mathcal{Y}$,
then $\mathrm{Pic}(X)\hookrightarrow\mathrm{Pic}(Y)$ as the elements
of $\mathrm{Pic}(X)$ must specialise to elements of $\mathrm{Pic}(Y)$. 

With this in mind we start with the proven fact (see for example \cite{schutt2010lines})
that the Picard group of the Fermat quartic, denoted by $\mathrm{Pic}(F_{4})$,
is generated by lines. Upon calculating the Gram matrix of the $48$
lines on $F_{4}$, we find that the Picard group has discriminant
$-64$. On the other hand we calculate the following Gram matrix,
which we denote $M_{F_{4}}$, generated by $16$ conics and four lines
(each line coming from a different set of eight lines associated to
a point $q_{i}$):
\[
\left(\begin{array}{rrrrrrrrrrrrrrrrrrrr}
-2 & 0 & 0 & 2 & 2 & 0 & 0 & 1 & 2 & 1 & 0 & 0 & 0 & 1 & 2 & 0 & 1 & 0 & 1 & 0\\
0 & -2 & 2 & 2 & 0 & 0 & 0 & 1 & 1 & 1 & 0 & 0 & 1 & 0 & 1 & 1 & 1 & 1 & 1 & 0\\
0 & 2 & -2 & 0 & 2 & 2 & 1 & 2 & 1 & 1 & 0 & 0 & 1 & 2 & 1 & 1 & 1 & 0 & 1 & 0\\
2 & 2 & 0 & -2 & 2 & 2 & 2 & 1 & 0 & 0 & 1 & 1 & 1 & 2 & 1 & 1 & 1 & 0 & 0 & 1\\
2 & 0 & 2 & 2 & -2 & 2 & 0 & 2 & 1 & 2 & 1 & 1 & 1 & 0 & 1 & 1 & 0 & 1 & 1 & 0\\
0 & 0 & 2 & 2 & 2 & -2 & 1 & 0 & 2 & 0 & 1 & 1 & 1 & 0 & 1 & 1 & 1 & 1 & 0 & 1\\
0 & 0 & 1 & 2 & 0 & 1 & -2 & 1 & 2 & 1 & 0 & 0 & 1 & 0 & 2 & 1 & 0 & 0 & 1 & 0\\
1 & 1 & 2 & 1 & 2 & 0 & 1 & -2 & 1 & 1 & 2 & 2 & 0 & 0 & 2 & 1 & 1 & 0 & 0 & 1\\
2 & 1 & 1 & 0 & 1 & 2 & 2 & 1 & -2 & 2 & 1 & 2 & 1 & 1 & 1 & 1 & 0 & 1 & 1 & 1\\
1 & 1 & 1 & 0 & 2 & 0 & 1 & 1 & 2 & -2 & 0 & 0 & 2 & 2 & 1 & 1 & 1 & 0 & 0 & 0\\
0 & 0 & 0 & 1 & 1 & 1 & 0 & 2 & 1 & 0 & -2 & 0 & 2 & 2 & 1 & 1 & 1 & 0 & 1 & 0\\
0 & 0 & 0 & 1 & 1 & 1 & 0 & 2 & 2 & 0 & 0 & -2 & 1 & 1 & 0 & 2 & 1 & 0 & 1 & 0\\
0 & 1 & 1 & 1 & 1 & 1 & 1 & 0 & 1 & 2 & 2 & 1 & -2 & 0 & 2 & 0 & 1 & 0 & 1 & 1\\
1 & 0 & 2 & 2 & 0 & 0 & 0 & 0 & 1 & 2 & 2 & 1 & 0 & -2 & 2 & 2 & 0 & 1 & 1 & 1\\
2 & 1 & 1 & 1 & 1 & 1 & 2 & 2 & 1 & 1 & 1 & 0 & 2 & 2 & -2 & 2 & 1 & 1 & 0 & 1\\
0 & 1 & 1 & 1 & 1 & 1 & 1 & 1 & 1 & 1 & 1 & 2 & 0 & 2 & 2 & -2 & 1 & 0 & 0 & 0\\
1 & 1 & 1 & 1 & 0 & 1 & 0 & 1 & 0 & 1 & 1 & 1 & 1 & 0 & 1 & 1 & -2 & 1 & 1 & 0\\
0 & 1 & 0 & 0 & 1 & 1 & 0 & 0 & 1 & 0 & 0 & 0 & 0 & 1 & 1 & 0 & 1 & -2 & 0 & 0\\
1 & 1 & 1 & 0 & 1 & 0 & 1 & 0 & 1 & 0 & 1 & 1 & 1 & 1 & 0 & 0 & 1 & 0 & -2 & 1\\
0 & 0 & 0 & 1 & 0 & 1 & 0 & 1 & 1 & 0 & 0 & 0 & 1 & 1 & 1 & 0 & 0 & 0 & 1 & -2
\end{array}\right)
\]
$M_{F_{4}}$ has determinant $-64$ and hence does represent $\mathrm{Pic}(F_{4})$. 

Let $X_{C}$ be a very general surface in $\mathcal{X}_{C}$. We extracted
the matrix $M_{C}$, from the intersection matrix of the lines and
conics on $X_{C}$, by looking at the lines and conics which specialise
to a subset of the $16$ conics and four lines that lie on the Fermat
quartic (which makes sense since $X_{0}\in\mathcal{X}_{C}$ is the
Fermat quartic). We ended up with $16$ conics and three lines (which
must come from three different sets of eight lines) and hence we have
a rank $19$, i.e. full rank, sublattice of $\mathrm{Pic}(X_{C})$.
Notice that $M_{C}$ is a minor of $M_{F_{4}}$ (just remove the last
row and column), and the lines and conics generating $M_{C}$ specialise
to those generating the corresponding minor of $M_{F_{4}}$. Hence
the lattice defined by $M_{C}$ is a primitive sublattice of $\mathrm{Pic}(F_{4})$.
If $M_{C}$ did not define $\mathrm{Pic}(X_{C})$ then $\mathrm{Pic}(X_{C})$
would be an overlattice of $M_{C}$. Furthermore by the remark at
the beginning of the proof $\mathrm{Pic}(X_{C})$ would be a sublattice
of $\mathrm{Pic}(F_{4})$. This is a contradiction to the fact that
$M_{C}$ is already a primitive sublattice of $\mathrm{Pic}(F_{4})$.
Hence the lattice defined by $M_{C}$, which is generated by lines
and conics, is $\mathrm{Pic}(X_{C})$.

Similarly we extracted $M_{C,D}$ from the intersection matrix using
$M_{C}$ (and note it is a minor of $M_{C}$ by removing the last
row and column), $M_{C,D,E}$ using $M_{C,D}$ (a minor of $M_{C,D}$
by removing the last row and column) and $M$ using $M_{C,D,E}$.
Hence by the same argument, they represent respectively $\mathrm{Pic}(X_{C,D})$,
$\mathrm{Pic}(X_{C,D,E})$ and $\mathrm{Pic}(X)$. Finally, we extracted
$M_{B}$ from $F_{4}$ using the same process (and notice it is a
minor of $F_{4}$ by removing column and row $18$), finishing the
proof.
\end{proof}
\begin{notation*}
Let $M,N$ be matrices, then we use $M^{N}$ to mean $N^{T}\cdot M\cdot N$.
\end{notation*}
We now have all the tools to prove our main result
\begin{thm}
\label{thm:Main Result Final}Let $p=[A,B,C,D,E]\in\pp^{4}$ define
the quartic $X_{p}:A(x^{4}+y^{4}+z^{4}+w^{4})+Bxyzw+C(x^{2}y^{2}+z^{2}w^{2})+D(x^{2}z^{2}+y^{2}w^{2})+E(x^{2}w^{2}+y^{2}z^{2})\subset\pp^{3}$.
Then

\begin{itemize}
\item A very general member of family parameterised by $\pp^{4}$ contains
no lines, has Picard rank $16$ and Picard group isomorphic to \textup{$E_{8}\left\langle -1\right\rangle \oplus U\oplus D_{5}\left\langle -2\right\rangle \oplus\left\langle -4\right\rangle ,$}
\item A very general member of the family parameterised by $[A,(DE-2AC)/A,C,D,E]$
contains exactly eight lines, has rank $17$ and Picard group isomorphic
to $E_{8}\left\langle -1\right\rangle \oplus U\oplus A_{2}\left\langle -2\right\rangle \oplus\left(D_{4}\left\langle -1\right\rangle \oplus\left\langle -2\right\rangle \right)^{N}$,
with 
\[
N=\begin{pmatrix}1 & 0 & 0 & 0 & 0\\
0 & 1 & 0 & 0 & 0\\
0 & 0 & 1 & 0 & 0\\
0 & 0 & 0 & 1 & 0\\
0 & 0 & 1 & 0 & 2
\end{pmatrix},
\]
\item A very general member of the family parameterised by $[A,0,C,D,2AC/D]$
contains exactly $16$ lines, has rank $18$ and Picard group isomorphic
to $E_{8}\left\langle -1\right\rangle \oplus U\oplus A_{7}\left\langle -1\right\rangle ^{I_{4,2}}\oplus\left\langle -8\right\rangle $,
with $I_{4,2}=\mathrm{Diag}([1,1,1,2,1,1,1])$,
\item A very general member of the family parameterised by $[A,B(2A-B)/A,B,B,B]$
contains exactly $24$ lines, has rank $19$ and Picard group isomorphic
to $E_{8}\left\langle -1\right\rangle \oplus U\oplus D_{8}\left\langle -1\right\rangle \oplus\left\langle -40\right\rangle $,
\item A very general member of the family parameterised by $[A,0,C,0,0]$
contains exactly $32$ lines, has rank $19$ and Picard group isomorphic
to $E_{8}\left\langle -1\right\rangle \oplus U\oplus D_{8}\left\langle -1\right\rangle ^{I_{8,2}}\oplus\left\langle -8\right\rangle $
with $I_{8,2}=\mathrm{Diag}([1,1,1,1,1,1,1,2])$,
\item The surface defined by the point $[\sqrt{-3},12(\sqrt{-3}-1),6,6,-6]$
contains exactly $32$ lines, has rank $20$ and Picard group isomorphic
either to $E_{8}\left\langle -1\right\rangle ^{\oplus2}\oplus U\oplus\left\langle -4\right\rangle \oplus\left\langle -24\right\rangle $
or to $E_{8}\left\langle -1\right\rangle ^{\oplus2}\oplus U\oplus\left\langle -4\right\rangle \oplus\left\langle -6\right\rangle $
(but not both),
\item The Fermat quartic defined by the point $[1,0,0,0,0]$ contains exactly
$48$ lines, has rank $20$ and Picard group isomorphic to $E_{8}\left\langle -1\right\rangle ^{\oplus2}\oplus U\oplus\left\langle -8\right\rangle \oplus\left\langle -8\right\rangle $.
\end{itemize}
Possibly except for the point $[\sqrt{-3},12(\sqrt{-3}-1),6,6,-6]$,
the Picard group is generated by the lines and conics lying on the
surface.
\end{thm}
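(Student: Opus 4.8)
The plan is to assemble Theorem~\ref{thm:Main Result Final} from the three preceding results and then carry out, family by family, an explicit lattice identification. The counts of lines are exactly Lemma~\ref{lem:Number of lines}, the Picard ranks are Proposition~\ref{prop:Picard Rank} (together with the classical rank~$20$ for $F_4$), and the assertion that the Picard group is generated by lines and conics in every case but $Y$ is Proposition~\ref{prop:Generators}. Thus the only genuinely new content is to recognise the Gram matrices $M,\,M_{C,D,E},\,M_{C,D},\,M_B,\,M_C,\,M_{F_4}$ and $M_Y$ of the appendix as the stated lattices. First I would record the two structural facts that drive every identification: the Picard lattice of a K3 surface is \emph{even} (since $K=0$, adjunction forces $C^2=2g(C)-2$ to be even) and, by the Hodge index theorem, has signature $(1,\rho-1)$; both facts can in any case be read off the given Gram matrices.

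For the families of rank at most $19$ the lattice $L=\mathrm{Pic}(X)$ is even and indefinite, so the strategy is to peel off the unimodular summands and then match a small remainder. Concretely, I would check the signature inequalities in Theorem~\ref{thm:Nikulin-E8-U} and use it to split $L\cong U\oplus E_8\langle-1\rangle\oplus T$, reducing each case to identifying the complement $T$ of rank $\rho-10$. To pin down $T$ I would compute its discriminant form from the Gram matrix and compare, using Table~\ref{tab:Lattices info}, against candidate orthogonal sums of pieces $A_n\langle m\rangle,\,D_n\langle m\rangle,\,\langle 2m\rangle$; whenever rank, signature and discriminant form agree, Nikulin's uniqueness Theorem~\ref{thm:Nikulin-Iso} (whose hypothesis $\mathrm{rank}(L)>\ell(A_L)+2$ is easily verified here) forces the isometry. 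This settles $\mathcal{X}$ and $\mathcal{X}_B$ outright. For $\mathcal{X}_{C,D,E},\,\mathcal{X}_{C,D}$ and $\mathcal{X}_C$ no such \emph{orthogonal} decomposition into standard pieces exists; there I would instead present the relevant summand as a full-rank sublattice of a standard piece (or of an orthogonal sum) via the explicit base-change matrices $N,\,I_{4,2},\,I_{8,2}$, checking that $L^N=N^T L N$ reproduces the stated Gram matrix, which is the same data as realising $L$ as the overlattice attached to an isotropic subgroup of the discriminant group through Theorem~\ref{thm:Niikulin-overlattice}.

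The two rank~$20$ surfaces are handled separately. For the Fermat quartic $F_4$ the Picard group is classically generated by its $48$ lines, so its Gram matrix and its discriminant $-64$ (recorded in Proposition~\ref{prop:Generators}) are known exactly; the lattice is still indefinite of signature $(1,19)$ with a two-generator discriminant group, so Theorem~\ref{thm:Nikulin-Iso} applies and yields $E_8\langle-1\rangle^{\oplus2}\oplus U\oplus\langle-8\rangle\oplus\langle-8\rangle$. The surface $Y$ is the real difficulty: Proposition~\ref{prop:Picard Rank} only gives $\rho(Y)=20$, and $M_Y$ is a full-rank sublattice of $\mathrm{Pic}(Y)$ of \emph{a priori} unknown index, so I cannot assert that lines and conics generate. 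Computing the discriminant form of $M_Y$ and enumerating its even overlattices of signature $(1,19)$ via Theorem~\ref{thm:Niikulin-overlattice} narrows $\mathrm{Pic}(Y)$ to the two candidates $E_8\langle-1\rangle^{\oplus2}\oplus U\oplus\langle-4\rangle\oplus\langle-24\rangle$ and $E_8\langle-1\rangle^{\oplus2}\oplus U\oplus\langle-4\rangle\oplus\langle-6\rangle$, which differ by an index-$2$ overlattice relation and are therefore not separated by the sublattice data at hand; this is the source of the ``either/or but not both'' conclusion.

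The main obstacle, beyond the bookkeeping of discriminant forms, is exactly this last point. For the rank-$20$ surface $Y$ the uniqueness input of Theorem~\ref{thm:Nikulin-Iso} is at its weakest and the available generators give only a finite-index sublattice, so deciding which of the two lattices actually occurs would require either producing explicit generators of $\mathrm{Pic}(Y)$ or an extra arithmetic invariant, for instance an Artin--Tate discriminant computation as in Subsection~\ref{subsec:Artin-Tate}; short of that, the ambiguity is intrinsic to the approach. By contrast, for all the lower-rank families the combination of Theorems~\ref{thm:Nikulin-Iso}, \ref{thm:Nikulin-E8-U} and~\ref{thm:Niikulin-overlattice} with the explicit base-change matrices makes the identification a finite, fully determined check.
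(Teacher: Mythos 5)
Your proposal is correct and follows essentially the same route as the paper: assemble the line counts, ranks, and generation statements from Lemma~\ref{lem:Number of lines}, Proposition~\ref{prop:Picard Rank} and Proposition~\ref{prop:Generators}, then identify each Gram matrix by matching rank, signature and discriminant form and invoking Theorem~\ref{thm:Nikulin-Iso} (with Theorem~\ref{thm:Nikulin-E8-U} to split off $U\oplus E_{8}\left\langle -1\right\rangle $ and Theorem~\ref{thm:Niikulin-overlattice} for the non-split cases). Your treatment of $Y$ --- $M_{Y}$ is only a full-rank sublattice, and enumerating its overlattices yields exactly two candidates, whence the ``either/or but not both'' conclusion --- is precisely the paper's argument.
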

\begin{proof}
The claim about the number of lines each very general member contains
is proven in Lemma \ref{lem:Number of lines} while the rank is proven
in Proposition \ref{prop:Picard Rank}. Apart from the surface defined
by $[\sqrt{-3},12(\sqrt{-3}-1),6,6,-6]$, the claim that the Picard
group of a very general member is defined by lines and conics is proven
by Proposition \ref{prop:Generators}.

As all the Picard groups are even and indefinite, and in each case
the rank is large enough, we can apply Theorem \ref{thm:Nikulin-Iso}
to each of our Picard groups. Specifically one can check (see \cite{ThesisCode})
that the discriminant form, rank and signature of the lattices defined
by $M,M_{C,D,E},M_{C,D},M_{B},M_{C}$ and $M_{F_{4}}$ are the same
as the discriminant form, rank and signature of the lattice

\begin{itemize}
\item $E_{8}\left\langle -1\right\rangle \oplus U\oplus D_{5}\left\langle -2\right\rangle \oplus\left\langle -4\right\rangle ,$
\item $E_{8}\left\langle -1\right\rangle \oplus U\oplus A_{2}\left\langle -2\right\rangle \oplus\left(D_{4}\left\langle -1\right\rangle \oplus\left\langle -2\right\rangle \right)^{N}$, 
\item $E_{8}\left\langle -1\right\rangle \oplus U\oplus A_{7}\left\langle -1\right\rangle ^{I_{4,2}}\oplus\left\langle -8\right\rangle $,
\item $E_{8}\left\langle -1\right\rangle \oplus U\oplus D_{8}\left\langle -1\right\rangle \oplus\left\langle -40\right\rangle $, 
\item $E_{8}\left\langle -1\right\rangle \oplus U\oplus D_{8}\left\langle -1\right\rangle ^{I_{8,2}}\oplus\left\langle -8\right\rangle $,
\item and $E_{8}\left\langle -1\right\rangle ^{\oplus2}\oplus U\oplus\left\langle -8\right\rangle \oplus\left\langle -8\right\rangle $
respectively.
\end{itemize}
For the surface $Y$, defined by $[\sqrt{-3},12(\sqrt{-3}-1),6,6,-6]$,
the lattice defined by $M_{Y}$ is isomorphic to $E_{8}\left\langle -1\right\rangle ^{\oplus2}\oplus U\oplus\left\langle -4\right\rangle \oplus\left\langle -24\right\rangle $.
While we don't know that the lattice defined by $M_{Y}$ is the Picard
group of $Y$, we know that it is a full rank sublattice of it. One
can then use Theorem \ref{thm:Niikulin-overlattice} to find all overlattices
of it, of which there is only one, and use Theorem \ref{thm:Nikulin-Iso}
to identify said lattice using its discriminant form, rank and signature.
\end{proof}
Recall that at the end of Section \ref{sec:The-Families-and-Lines},
we had a diagram illustrating the various subfamilies of $\mathcal{X}$
containing lines and how they fitted together. Here we reproduce the
same diagram where instead of the families, we put together the Picard
group of the generic member of each family (except for the surface
$Y$, where we put the two possible Picard groups), and instead of
the dimension of each family we put the rank of the Picard group.

\selectlanguage{english}%
\[
\xyR{2pc}\xyC{-0.5pc}\xymatrix{\mathrm{rank}\\
20 & \underset{\mathrm{Discriminant}\,-2^{6}}{E_{8}\left\langle -1\right\rangle ^{\oplus2}\oplus U\oplus\left\langle -8\right\rangle \oplus\left\langle -8\right\rangle } & \underset{\mathrm{Discriminant}\,-2^{3}\cdot3}{E_{8}\left\langle -1\right\rangle ^{\oplus2}\oplus U\oplus\left\langle -4\right\rangle \oplus\left\langle -6\right\rangle } & \underset{\mathrm{Discriminant\,-2^{5}\cdot3}}{E_{8}\left\langle -1\right\rangle ^{\oplus2}\oplus U\oplus\left\langle -4\right\rangle \oplus\left\langle -24\right\rangle \ar@{_{(}->}[l]^{\mathrm{ind\,}2}}\\
19 & \underset{\mathrm{Discriminant\,}2^{7}}{E_{8}\left\langle -1\right\rangle \oplus U\oplus D_{8}\left\langle -1\right\rangle ^{I_{8,2}}\oplus\left\langle -8\right\rangle }\ar@{^{(}->}[u]_{\mathrm{primitive}} & \underset{\mathrm{Discriminant\,}2^{5}\cdot5}{E_{8}\left\langle -1\right\rangle \oplus U\oplus D_{8}\left\langle -1\right\rangle \oplus\left\langle -40\right\rangle }\ar@{_{(}->}[ul]^{\mathrm{primitive}}\\
18 & \underset{\mathrm{Discriminant\,}-2^{8}}{E_{8}\left\langle -1\right\rangle \oplus U\oplus A_{7}\left\langle -1\right\rangle ^{I_{4,2}}\oplus\left\langle -8\right\rangle }\ar@{^{(}->}[u]_{\mathrm{primitive}}\\
17 &  & \underset{\mathrm{Discriminant\,}2^{7}\cdot3}{E_{8}\left\langle -1\right\rangle \oplus U\oplus A_{2}\left\langle -2\right\rangle \oplus\left(D_{4}\left\langle -1\right\rangle \oplus\left\langle -2\right\rangle \right)^{N}}\ar@{^{(}->}[uu]_{\mathrm{primitive}}\ar@{_{(}->}[ul]^{\mathrm{primitive}}\ar@{^{(}->}[uuur]_{\mathrm{primitive}}\\
16 &  & \underset{\mathrm{Discriminant\,}-2^{9}}{E_{8}\left\langle -1\right\rangle \oplus U\oplus D_{5}\left\langle -2\right\rangle \oplus\left\langle -4\right\rangle }\ar@{^{(}->}[u]_{\mathrm{primitive}}
}
\]

\selectlanguage{british}%

\subsection{Method}

We include here two examples of how the isomorphic lattices were found
for Theorem \ref{thm:Main Result Final}, which the reader might find
useful. Those two examples illustrate the two different approaches
we took in identifying the lattices.

We start with the lattice defined by $M$, i.e. the Picard group of
a very general member $X$ of $\mathcal{X}$. We know that $M$ has
signature $(1,15)$ and rank $16$. We calculate its discriminant
group to be $C_{2}^{4}\times C_{4}\times C_{8}$, and $\mathrm{Pic}(X)$
has discriminant $-512$ (this concurs with the proof of \cite[Thm 7.3, Cor 7.4]{eklund2010}).
By Theorem \ref{thm:Nikulin-E8-U}, we see that we can fit in one
copy of $E_{8}(-1)$ and one copy of $U$ in $\mathrm{Pic}(X)$, i.e.,
$\mathrm{Pic}(X)=U\oplus E_{8}\left\langle -1\right\rangle \oplus T$
where $T$ is a lattice with the same discriminant group and discriminant
form as $\mathrm{Pic}(X)$, but with signature $(0,6)$.

Recall that $A_{L}$ denotes the discriminant group of a lattice $L$,
and $q_{L}$ its discriminant form. We calculate the discriminant
form and find that:
\begin{itemize}
\item If $x\in A_{\mathrm{Pic}(X)}$ has order $2$ then $q_{\mathrm{Pic}(X)}(x)\in\{0,1\}$,
\item If $x\in A_{\mathrm{Pic}(X)}$ has order $4$ then $q_{\mathrm{Pic}(X)}(x)\in\{-\frac{3}{4},-\frac{2}{4},-\frac{1}{4},\frac{1}{4},\frac{2}{4},\frac{3}{4}\}$,
\item If $x\in A_{\mathrm{Pic}(X)}$ has order $8$ then $q_{\mathrm{Pic}(X)}(x)\in\{-\frac{7}{8},-\frac{5}{8},\frac{1}{8},\frac{3}{8}\}$.
\end{itemize}
As the lattice $\left\langle -4\right\rangle $ has discriminant form
$-\frac{1}{4}$ and discriminant group $C_{4}$, we guess that it
appears as one of the summands of $\mathrm{Pic}(X)$. Using Table
\ref{tab:Lattices info} and the fact we need negative definite lattices,
we see that the $C_{8}$ factor could arise from $A_{7}\left\langle -1\right\rangle $,
$A_{3}\left\langle -2\right\rangle $, $\left\langle -8\right\rangle $
or $D_{2n+1}\left\langle -2\right\rangle $. As $A_{7}\left\langle -1\right\rangle $
has too large of a rank (greater than six), and both $A_{3}\left\langle -2\right\rangle $
and $\left\langle -8\right\rangle $ have an element of order $8$
with discriminant form $-\frac{3}{8}$ and $-\frac{1}{8}$ respectively,
they can not be a factor of $\mathrm{Pic}(X)$. On the other hand,
$D_{5}\left\langle -2\right\rangle $ does not give any obvious contradiction
while having discriminant group $C_{2}^{4}\times C_{8}$. We guess
that it is a factor of $\mathrm{Pic}(X)$. Hence putting everything
together we check that $\mathrm{Pic}(X)\cong U\otimes E_{8}\left\langle -1\right\rangle \otimes D_{5}\left\langle -2\right\rangle \otimes\left\langle -4\right\rangle $.
It is easy to see they have the same rank and signature; and a calculation
checks they have the same discriminant form, namely both discriminant
group have a basis $\{g_{1},\dots,g_{6}\}$ such that the discriminant
form is given by
\begin{eqnarray*}
M_{q_{L}}(a_{ij}) & = & \begin{cases}
q_{L}(g_{i}+g_{j}) & i\neq j\\
q_{L}(g_{i}) & i=j
\end{cases}\\
 & = & \left(\begin{array}{cccccc}
0 & 1 & 1 & 1 & -\frac{1}{4} & -\frac{5}{8}\\
1 & 1 & 1 & 0 & \frac{3}{4} & -\frac{5}{8}\\
1 & 1 & 1 & 0 & \frac{3}{4} & -\frac{5}{8}\\
1 & 0 & 0 & 0 & -\frac{1}{4} & \frac{3}{8}\\
-\frac{1}{4} & \frac{3}{4} & \frac{3}{4} & -\frac{1}{4} & -\frac{1}{4} & -\frac{7}{8}\\
-\frac{5}{8} & -\frac{5}{8} & -\frac{5}{8} & \frac{3}{8} & -\frac{7}{8} & -\frac{5}{8}
\end{array}\right).
\end{eqnarray*}

Our second example is with the lattice defined by $M_{C}$, i.e. the
Picard group of a very general member $X_{C}$ of $\mathcal{X}_{C}$.
We know that $M_{C}$ has signature $(1,18)$ and rank $19$. We calculate
that $\mathrm{Pic}(X_{C})$ has discriminant $128$ and discriminant
group $C_{4}^{2}\times C_{8}$. By Theorem \ref{thm:Nikulin-E8-U},
we know that $\mathrm{Pic}(X_{C})\cong E_{8}\left\langle -1\right\rangle \oplus U\oplus T$,
where $T$ is a lattice of signature $(0,9)$ with discriminant group
$C_{4}^{2}\times C_{8}$ and discriminant form as:
\begin{itemize}
\item If $x\in A_{\mathrm{Pic}(X_{C})}$ has order $2$ then $q_{\mathrm{Pic}(X_{C})}(x)\in\{0\}$,
\item If $x\in A_{\mathrm{Pic}(X_{C})}$ has order $4$ then $q_{\mathrm{Pic}(X_{C})}(x)\in\{-\frac{1}{2},0,\frac{1}{2},1\}$,
\item If $x\in A_{\mathrm{Pic}(X_{C})}$ has order $8$ then $q_{\mathrm{Pic}(X_{C})}(x)\in\{-\frac{5}{8},-\frac{1}{8},\frac{3}{8},\frac{7}{8}\}$.
\end{itemize}
As there is no negative definite lattice in Table \ref{tab:Lattices info}
which gives a copy of $C_{4}$ without giving an element of discriminant
form $\frac{2n+1}{4}$ for some $n$, we deduce that $T$ can not
be written simply in terms of scaled root lattices. Instead we use
Theorem \ref{thm:Niikulin-overlattice} to find an overlattice of
$\mathrm{Pic}(X_{C})$ that we can identify. In particular, if we
let $\left\{ e_{i}\right\} $ be the basis given by $M_{C}$, then
$\frac{1}{2}(e_{4}+e_{5}+e_{10}+e_{11}+e_{13}+e_{14})\in A_{\mathrm{Pic}(X_{C})}$
has order two and discriminant form zero. This generates an isotropic
subgroup of $A_{\mathrm{Pic}(X_{C})}$ and gives a corresponding index
two overlattice. This overlattice, $L$, has discriminant group $C_{2}^{2}\times C_{8}$
and discriminant form given as:
\begin{itemize}
\item If $x\in A_{L}$ has order $2$ then $q_{L}(x)\in\{-\frac{1}{2},0,\frac{1}{2}\}$,
\item If $x\in A_{L}$ has order $4$ then $q_{L}(x)\in\{-\frac{1}{2},0,\frac{1}{2}\}$,
\item If $x\in A_{L}$ has order $8$ then $q_{L}(x)\in\{-\frac{5}{8},-\frac{1}{8},\frac{3}{8},\frac{7}{8}\}$.
\end{itemize}
Following our first example this allows us to guess that $L\cong E_{8}\left\langle -1\right\rangle \oplus U\oplus D_{8}\left\langle -1\right\rangle \oplus\left\langle -8\right\rangle $.
We check that is the case, as they both have rank $19$, signature
$(1,18)$ and discriminant form given by
\begin{eqnarray*}
M_{q_{L}}(a_{ij}) & = & \begin{cases}
q_{L}(g_{i}+g_{j}) & i\neq j\\
q_{L}(g_{i}) & i=j
\end{cases}\\
 & = & \begin{pmatrix}0 & 1 & -\frac{1}{8}\\
1 & 0 & \frac{7}{8}\\
-\frac{1}{8} & \frac{7}{8} & \frac{7}{8}
\end{pmatrix}.
\end{eqnarray*}
Knowing that $\mathrm{Pic}(X)$ is an index two full rank sublattice
of $E_{8}\left\langle -1\right\rangle \oplus U\oplus D_{8}\left\langle -1\right\rangle \oplus\left\langle -8\right\rangle $,
we enumerate the index two full rank sublattices of $E_{8}\left\langle -1\right\rangle \oplus U\oplus D_{8}\left\langle -1\right\rangle \oplus\left\langle -8\right\rangle $
until we find one that has the same discriminant form as $\mathrm{Pic}(X)$. 

\newpage{}

\appendix

\section{The equations of the lines \label{sec:Eqn of lines}}

The following table gives the equations of the $8$ lines lying on
the point $p=[A,B,C,D,E]\in\pp^{5}$ depending on which tangent cone
its lies on.

\begin{table}[h]
\selectlanguage{english}%
\begin{tabular}{|>{\centering}p{2cm}|c|>{\centering}p{11.5cm}|}
\hline 
\textbf{Tangent cone to the point} & \textbf{Conics associated} & \textbf{Lines}\tabularnewline
\hline 
\hline 
\foreignlanguage{british}{$q_{1}$} & $x^{2}-y^{2}-z^{2}+w^{2}$ & $2\sqrt{q_{+C}}x+\sqrt{p_{-1}}z+\sqrt{-p_{+0}}w=2\sqrt{q_{+C}}y+\sqrt{-p_{+0}}z+\sqrt{p_{-1}}w=0$
$2\sqrt{q_{+C}}x+\sqrt{p_{+1}}z+\sqrt{-p_{-0}}w=2\sqrt{q_{+C}}y-\sqrt{-p_{-0}}z-\sqrt{p_{+1}}w=0$\tabularnewline
\hline 
\foreignlanguage{british}{$q_{2}$} & $x^{2}-y^{2}+z^{2}-w^{2}$ & $2\sqrt{q_{+C}}x+\sqrt{-p_{+0}}z+\sqrt{p_{-1}}w=2\sqrt{q_{+C}}y+\sqrt{p_{-1}}z+\sqrt{-p_{+0}}w=0$
$2\sqrt{q_{+C}}x+\sqrt{-p_{-0}}z+\sqrt{p_{+1}}w=2\sqrt{q_{+C}}y-\sqrt{p_{+1}}z-\sqrt{-p_{-0}}w=0$\tabularnewline
\hline 
\foreignlanguage{british}{$q_{3}$} & $x^{2}+y^{2}-z^{2}-w^{2}$ & $2\sqrt{q_{-C}}x+\sqrt{p_{+3}}z+\sqrt{p_{-2}}w=2\sqrt{q_{-C}}y-\sqrt{p_{-2}}z+\sqrt{p_{+3}}w=0$
$2\sqrt{q_{-C}}x+\sqrt{p_{-3}}z+\sqrt{p_{+2}}w=2\sqrt{q_{-C}}y+\sqrt{p_{+2}}z-\sqrt{p_{-3}}w=0$\tabularnewline
\hline 
\foreignlanguage{british}{$q_{4}$} & $x^{2}+y^{2}+z^{2}+w^{2}$ & $2\sqrt{q_{-C}}x+\sqrt{-p_{-2}}z+\sqrt{-p_{+3}}w=2\sqrt{q_{-C}}y-\sqrt{-p_{+3}}z+\sqrt{-p_{-2}}w=0$
$2\sqrt{q_{-C}}x+\sqrt{-p_{+2}}z+\sqrt{-p_{-3}}w=2\sqrt{q_{-C}}y+\sqrt{-p_{-3}}z-\sqrt{-p_{+2}}w=0$\tabularnewline
\hline 
\foreignlanguage{british}{$q_{5}$} & $xy-zw$ & $2\sqrt{A}x+\left(\sqrt{q_{-D}}+\sqrt{-q_{+D}}\right)z=2\sqrt{A}y+\left(\sqrt{q_{-D}}-\sqrt{-q_{+D}}\right)w=0$

$2\sqrt{A}x+\left(\sqrt{q_{-E}}+\sqrt{-q_{+E}}\right)w=2\sqrt{A}y+\left(\sqrt{q_{-E}}-\sqrt{-q_{+E}}\right)z=0$\tabularnewline
\hline 
\foreignlanguage{british}{$q_{6}$} & $xy+zw$ & $2\sqrt{A}x+\left(\sqrt{q_{-D}}+\sqrt{-q_{+D}}\right)z=2\sqrt{A}y-\left(\sqrt{q_{-D}}-\sqrt{-q_{+D}}\right)w=0$$2\sqrt{A}x+\left(\sqrt{q_{-E}}+\sqrt{-q_{+E}}\right)w=2\sqrt{A}y-\left(\sqrt{q_{-E}}-\sqrt{-q_{+E}}\right)z=0$\tabularnewline
\hline 
\foreignlanguage{british}{$q_{7}$} & $xz-yw$ & $2\sqrt{A}x+\left(\sqrt{q_{-C}}+\sqrt{-q_{+C}}\right)y=2\sqrt{A}z+\left(\sqrt{q_{-C}}-\sqrt{-q_{+C}}\right)w=0$

$2\sqrt{A}x+\left(\sqrt{q_{-E}}+\sqrt{-q_{+E}}\right)w=2\sqrt{A}z+\left(\sqrt{q_{-E}}-\sqrt{-q_{+E}}\right)zy=0$\tabularnewline
\hline 
\foreignlanguage{british}{$q_{8}$} & $xz+yw$ & $2\sqrt{A}x+\left(\sqrt{q_{-C}}+\sqrt{-q_{+C}}\right)y=2\sqrt{A}z-\left(\sqrt{q_{-C}}-\sqrt{-q_{+C}}\right)w=0$$2\sqrt{A}x+\left(\sqrt{q_{-E}}+\sqrt{-q_{+E}}\right)w=2\sqrt{A}z-\left(\sqrt{q_{-E}}-\sqrt{-q_{+E}}\right)y=0$\tabularnewline
\hline 
\foreignlanguage{british}{$q_{9}$} & $xw-yz$ & $2\sqrt{A}x+\left(\sqrt{q_{-C}}+\sqrt{-q_{+C}}\right)y=2\sqrt{A}w+\left(\sqrt{q_{-C}}-\sqrt{-q_{+C}}\right)z=0$

$2\sqrt{A}x+\left(\sqrt{q_{-D}}+\sqrt{-q_{+D}}\right)z=2\sqrt{A}w+\left(\sqrt{q_{-D}}-\sqrt{-q_{+D}}\right)y=0$\tabularnewline
\hline 
\foreignlanguage{british}{$q_{10}$} & $xw+yz$ & $2\sqrt{A}x+\left(\sqrt{q_{-C}}+\sqrt{-q_{+C}}\right)y=2\sqrt{A}w-\left(\sqrt{q_{-C}}-\sqrt{-q_{+C}}\right)z=0$

$2\sqrt{A}x+\left(\sqrt{q_{-D}}+\sqrt{-q_{+D}}\right)z=2\sqrt{A}w-\left(\sqrt{q_{-D}}-\sqrt{-q_{+D}}\right)y=0$\tabularnewline
\hline 
\end{tabular}

\selectlanguage{british}%
\caption{Equations of lines \label{tab:Equations-of-lines}}

\end{table}

\newpage{}

\section{List Of Gram Matrices \label{sec:Matrices}}

\vfill{}

For a very general member of the family $\mathcal{X}$, a full rank
minor of minimal discriminant is

\selectlanguage{english}%
\[
M=\left(\begin{array}{rrrrrrrrrrrrrrrr}
-2 & 0 & 0 & 2 & 2 & 0 & 0 & 1 & 2 & 1 & 0 & 0 & 0 & 1 & 2 & 0\\
0 & -2 & 2 & 2 & 0 & 0 & 0 & 1 & 1 & 1 & 0 & 0 & 1 & 0 & 1 & 1\\
0 & 2 & -2 & 0 & 2 & 2 & 1 & 2 & 1 & 1 & 0 & 0 & 1 & 2 & 1 & 1\\
2 & 2 & 0 & -2 & 2 & 2 & 2 & 1 & 0 & 0 & 1 & 1 & 1 & 2 & 1 & 1\\
2 & 0 & 2 & 2 & -2 & 2 & 0 & 2 & 1 & 2 & 1 & 1 & 1 & 0 & 1 & 1\\
0 & 0 & 2 & 2 & 2 & -2 & 1 & 0 & 2 & 0 & 1 & 1 & 1 & 0 & 1 & 1\\
0 & 0 & 1 & 2 & 0 & 1 & -2 & 1 & 2 & 1 & 0 & 0 & 1 & 0 & 2 & 1\\
1 & 1 & 2 & 1 & 2 & 0 & 1 & -2 & 1 & 1 & 2 & 2 & 0 & 0 & 2 & 1\\
2 & 1 & 1 & 0 & 1 & 2 & 2 & 1 & -2 & 2 & 1 & 2 & 1 & 1 & 1 & 1\\
1 & 1 & 1 & 0 & 2 & 0 & 1 & 1 & 2 & -2 & 0 & 0 & 2 & 2 & 1 & 1\\
0 & 0 & 0 & 1 & 1 & 1 & 0 & 2 & 1 & 0 & -2 & 0 & 2 & 2 & 1 & 1\\
0 & 0 & 0 & 1 & 1 & 1 & 0 & 2 & 2 & 0 & 0 & -2 & 1 & 1 & 0 & 2\\
0 & 1 & 1 & 1 & 1 & 1 & 1 & 0 & 1 & 2 & 2 & 1 & -2 & 0 & 2 & 0\\
1 & 0 & 2 & 2 & 0 & 0 & 0 & 0 & 1 & 2 & 2 & 1 & 0 & -2 & 2 & 2\\
2 & 1 & 1 & 1 & 1 & 1 & 2 & 2 & 1 & 1 & 1 & 0 & 2 & 2 & -2 & 2\\
0 & 1 & 1 & 1 & 1 & 1 & 1 & 1 & 1 & 1 & 1 & 2 & 0 & 2 & 2 & -2
\end{array}\right)
\]

\selectlanguage{british}%
\vfill{}

For a very general member of the family $\mathcal{X}_{C,D,E}$, a
full rank minor of minimal discriminant is

\[
M_{C,D,E}=\left(\begin{array}{rrrrrrrrrrrrrrrrr}
-2 & 0 & 0 & 2 & 2 & 0 & 0 & 1 & 2 & 1 & 0 & 0 & 0 & 1 & 2 & 0 & 1\\
0 & -2 & 2 & 2 & 0 & 0 & 0 & 1 & 1 & 1 & 0 & 0 & 1 & 0 & 1 & 1 & 1\\
0 & 2 & -2 & 0 & 2 & 2 & 1 & 2 & 1 & 1 & 0 & 0 & 1 & 2 & 1 & 1 & 1\\
2 & 2 & 0 & -2 & 2 & 2 & 2 & 1 & 0 & 0 & 1 & 1 & 1 & 2 & 1 & 1 & 1\\
2 & 0 & 2 & 2 & -2 & 2 & 0 & 2 & 1 & 2 & 1 & 1 & 1 & 0 & 1 & 1 & 0\\
0 & 0 & 2 & 2 & 2 & -2 & 1 & 0 & 2 & 0 & 1 & 1 & 1 & 0 & 1 & 1 & 1\\
0 & 0 & 1 & 2 & 0 & 1 & -2 & 1 & 2 & 1 & 0 & 0 & 1 & 0 & 2 & 1 & 0\\
1 & 1 & 2 & 1 & 2 & 0 & 1 & -2 & 1 & 1 & 2 & 2 & 0 & 0 & 2 & 1 & 1\\
2 & 1 & 1 & 0 & 1 & 2 & 2 & 1 & -2 & 2 & 1 & 2 & 1 & 1 & 1 & 1 & 0\\
1 & 1 & 1 & 0 & 2 & 0 & 1 & 1 & 2 & -2 & 0 & 0 & 2 & 2 & 1 & 1 & 1\\
0 & 0 & 0 & 1 & 1 & 1 & 0 & 2 & 1 & 0 & -2 & 0 & 2 & 2 & 1 & 1 & 1\\
0 & 0 & 0 & 1 & 1 & 1 & 0 & 2 & 2 & 0 & 0 & -2 & 1 & 1 & 0 & 2 & 1\\
0 & 1 & 1 & 1 & 1 & 1 & 1 & 0 & 1 & 2 & 2 & 1 & -2 & 0 & 2 & 0 & 1\\
1 & 0 & 2 & 2 & 0 & 0 & 0 & 0 & 1 & 2 & 2 & 1 & 0 & -2 & 2 & 2 & 0\\
2 & 1 & 1 & 1 & 1 & 1 & 2 & 2 & 1 & 1 & 1 & 0 & 2 & 2 & -2 & 2 & 1\\
0 & 1 & 1 & 1 & 1 & 1 & 1 & 1 & 1 & 1 & 1 & 2 & 0 & 2 & 2 & -2 & 1\\
1 & 1 & 1 & 1 & 0 & 1 & 0 & 1 & 0 & 1 & 1 & 1 & 1 & 0 & 1 & 1 & -2
\end{array}\right)
\]

\vfill{}

\newpage{}

For a very general member of the family $\mathcal{X}_{C,D}$, a full
rank minor of minimal discriminant is

\[
M_{C,D}=\left(\begin{array}{rrrrrrrrrrrrrrrrrr}
-2 & 0 & 0 & 2 & 2 & 0 & 0 & 1 & 2 & 1 & 0 & 0 & 0 & 1 & 2 & 0 & 1 & 0\\
0 & -2 & 2 & 2 & 0 & 0 & 0 & 1 & 1 & 1 & 0 & 0 & 1 & 0 & 1 & 1 & 1 & 1\\
0 & 2 & -2 & 0 & 2 & 2 & 1 & 2 & 1 & 1 & 0 & 0 & 1 & 2 & 1 & 1 & 1 & 0\\
2 & 2 & 0 & -2 & 2 & 2 & 2 & 1 & 0 & 0 & 1 & 1 & 1 & 2 & 1 & 1 & 1 & 0\\
2 & 0 & 2 & 2 & -2 & 2 & 0 & 2 & 1 & 2 & 1 & 1 & 1 & 0 & 1 & 1 & 0 & 1\\
0 & 0 & 2 & 2 & 2 & -2 & 1 & 0 & 2 & 0 & 1 & 1 & 1 & 0 & 1 & 1 & 1 & 1\\
0 & 0 & 1 & 2 & 0 & 1 & -2 & 1 & 2 & 1 & 0 & 0 & 1 & 0 & 2 & 1 & 0 & 0\\
1 & 1 & 2 & 1 & 2 & 0 & 1 & -2 & 1 & 1 & 2 & 2 & 0 & 0 & 2 & 1 & 1 & 0\\
2 & 1 & 1 & 0 & 1 & 2 & 2 & 1 & -2 & 2 & 1 & 2 & 1 & 1 & 1 & 1 & 0 & 1\\
1 & 1 & 1 & 0 & 2 & 0 & 1 & 1 & 2 & -2 & 0 & 0 & 2 & 2 & 1 & 1 & 1 & 0\\
0 & 0 & 0 & 1 & 1 & 1 & 0 & 2 & 1 & 0 & -2 & 0 & 2 & 2 & 1 & 1 & 1 & 0\\
0 & 0 & 0 & 1 & 1 & 1 & 0 & 2 & 2 & 0 & 0 & -2 & 1 & 1 & 0 & 2 & 1 & 0\\
0 & 1 & 1 & 1 & 1 & 1 & 1 & 0 & 1 & 2 & 2 & 1 & -2 & 0 & 2 & 0 & 1 & 0\\
1 & 0 & 2 & 2 & 0 & 0 & 0 & 0 & 1 & 2 & 2 & 1 & 0 & -2 & 2 & 2 & 0 & 1\\
2 & 1 & 1 & 1 & 1 & 1 & 2 & 2 & 1 & 1 & 1 & 0 & 2 & 2 & -2 & 2 & 1 & 1\\
0 & 1 & 1 & 1 & 1 & 1 & 1 & 1 & 1 & 1 & 1 & 2 & 0 & 2 & 2 & -2 & 1 & 0\\
1 & 1 & 1 & 1 & 0 & 1 & 0 & 1 & 0 & 1 & 1 & 1 & 1 & 0 & 1 & 1 & -2 & 1\\
0 & 1 & 0 & 0 & 1 & 1 & 0 & 0 & 1 & 0 & 0 & 0 & 0 & 1 & 1 & 0 & 1 & -2
\end{array}\right)
\]

\vfill{}

For a very general member of the family $\mathcal{X}_{B}$, a full
rank minor of minimal discriminant is

\[
M_{B}=\left(\begin{array}{rrrrrrrrrrrrrrrrrrr}
-2 & 0 & 0 & 2 & 2 & 0 & 0 & 1 & 2 & 1 & 0 & 0 & 0 & 1 & 2 & 0 & 1 & 1 & 0\\
0 & -2 & 2 & 2 & 0 & 0 & 0 & 1 & 1 & 1 & 0 & 0 & 1 & 0 & 1 & 1 & 1 & 1 & 0\\
0 & 2 & -2 & 0 & 2 & 2 & 1 & 2 & 1 & 1 & 0 & 0 & 1 & 2 & 1 & 1 & 1 & 1 & 0\\
2 & 2 & 0 & -2 & 2 & 2 & 2 & 1 & 0 & 0 & 1 & 1 & 1 & 2 & 1 & 1 & 1 & 0 & 1\\
2 & 0 & 2 & 2 & -2 & 2 & 0 & 2 & 1 & 2 & 1 & 1 & 1 & 0 & 1 & 1 & 0 & 1 & 0\\
0 & 0 & 2 & 2 & 2 & -2 & 1 & 0 & 2 & 0 & 1 & 1 & 1 & 0 & 1 & 1 & 1 & 0 & 1\\
0 & 0 & 1 & 2 & 0 & 1 & -2 & 1 & 2 & 1 & 0 & 0 & 1 & 0 & 2 & 1 & 0 & 1 & 0\\
1 & 1 & 2 & 1 & 2 & 0 & 1 & -2 & 1 & 1 & 2 & 2 & 0 & 0 & 2 & 1 & 1 & 0 & 1\\
2 & 1 & 1 & 0 & 1 & 2 & 2 & 1 & -2 & 2 & 1 & 2 & 1 & 1 & 1 & 1 & 0 & 1 & 1\\
1 & 1 & 1 & 0 & 2 & 0 & 1 & 1 & 2 & -2 & 0 & 0 & 2 & 2 & 1 & 1 & 1 & 0 & 0\\
0 & 0 & 0 & 1 & 1 & 1 & 0 & 2 & 1 & 0 & -2 & 0 & 2 & 2 & 1 & 1 & 1 & 1 & 0\\
0 & 0 & 0 & 1 & 1 & 1 & 0 & 2 & 2 & 0 & 0 & -2 & 1 & 1 & 0 & 2 & 1 & 1 & 0\\
0 & 1 & 1 & 1 & 1 & 1 & 1 & 0 & 1 & 2 & 2 & 1 & -2 & 0 & 2 & 0 & 1 & 1 & 1\\
1 & 0 & 2 & 2 & 0 & 0 & 0 & 0 & 1 & 2 & 2 & 1 & 0 & -2 & 2 & 2 & 0 & 1 & 1\\
2 & 1 & 1 & 1 & 1 & 1 & 2 & 2 & 1 & 1 & 1 & 0 & 2 & 2 & -2 & 2 & 1 & 0 & 1\\
0 & 1 & 1 & 1 & 1 & 1 & 1 & 1 & 1 & 1 & 1 & 2 & 0 & 2 & 2 & -2 & 1 & 0 & 0\\
1 & 1 & 1 & 1 & 0 & 1 & 0 & 1 & 0 & 1 & 1 & 1 & 1 & 0 & 1 & 1 & -2 & 1 & 0\\
1 & 1 & 1 & 0 & 1 & 0 & 1 & 0 & 1 & 0 & 1 & 1 & 1 & 1 & 0 & 0 & 1 & -2 & 1\\
0 & 0 & 0 & 1 & 0 & 1 & 0 & 1 & 1 & 0 & 0 & 0 & 1 & 1 & 1 & 0 & 0 & 1 & -2
\end{array}\right)
\]

\vfill{}

\newpage{}

For a very general member of the family $\mathcal{X}_{C}$, a full
rank minor of minimal discriminant is

\[
M_{C}=\left(\begin{array}{rrrrrrrrrrrrrrrrrrr}
-2 & 0 & 0 & 2 & 2 & 0 & 0 & 1 & 2 & 1 & 0 & 0 & 0 & 1 & 2 & 0 & 1 & 0 & 1\\
0 & -2 & 2 & 2 & 0 & 0 & 0 & 1 & 1 & 1 & 0 & 0 & 1 & 0 & 1 & 1 & 1 & 1 & 1\\
0 & 2 & -2 & 0 & 2 & 2 & 1 & 2 & 1 & 1 & 0 & 0 & 1 & 2 & 1 & 1 & 1 & 0 & 1\\
2 & 2 & 0 & -2 & 2 & 2 & 2 & 1 & 0 & 0 & 1 & 1 & 1 & 2 & 1 & 1 & 1 & 0 & 0\\
2 & 0 & 2 & 2 & -2 & 2 & 0 & 2 & 1 & 2 & 1 & 1 & 1 & 0 & 1 & 1 & 0 & 1 & 1\\
0 & 0 & 2 & 2 & 2 & -2 & 1 & 0 & 2 & 0 & 1 & 1 & 1 & 0 & 1 & 1 & 1 & 1 & 0\\
0 & 0 & 1 & 2 & 0 & 1 & -2 & 1 & 2 & 1 & 0 & 0 & 1 & 0 & 2 & 1 & 0 & 0 & 1\\
1 & 1 & 2 & 1 & 2 & 0 & 1 & -2 & 1 & 1 & 2 & 2 & 0 & 0 & 2 & 1 & 1 & 0 & 0\\
2 & 1 & 1 & 0 & 1 & 2 & 2 & 1 & -2 & 2 & 1 & 2 & 1 & 1 & 1 & 1 & 0 & 1 & 1\\
1 & 1 & 1 & 0 & 2 & 0 & 1 & 1 & 2 & -2 & 0 & 0 & 2 & 2 & 1 & 1 & 1 & 0 & 0\\
0 & 0 & 0 & 1 & 1 & 1 & 0 & 2 & 1 & 0 & -2 & 0 & 2 & 2 & 1 & 1 & 1 & 0 & 1\\
0 & 0 & 0 & 1 & 1 & 1 & 0 & 2 & 2 & 0 & 0 & -2 & 1 & 1 & 0 & 2 & 1 & 0 & 1\\
0 & 1 & 1 & 1 & 1 & 1 & 1 & 0 & 1 & 2 & 2 & 1 & -2 & 0 & 2 & 0 & 1 & 0 & 1\\
1 & 0 & 2 & 2 & 0 & 0 & 0 & 0 & 1 & 2 & 2 & 1 & 0 & -2 & 2 & 2 & 0 & 1 & 1\\
2 & 1 & 1 & 1 & 1 & 1 & 2 & 2 & 1 & 1 & 1 & 0 & 2 & 2 & -2 & 2 & 1 & 1 & 0\\
0 & 1 & 1 & 1 & 1 & 1 & 1 & 1 & 1 & 1 & 1 & 2 & 0 & 2 & 2 & -2 & 1 & 0 & 0\\
1 & 1 & 1 & 1 & 0 & 1 & 0 & 1 & 0 & 1 & 1 & 1 & 1 & 0 & 1 & 1 & -2 & 1 & 1\\
0 & 1 & 0 & 0 & 1 & 1 & 0 & 0 & 1 & 0 & 0 & 0 & 0 & 1 & 1 & 0 & 1 & -2 & 0\\
1 & 1 & 1 & 0 & 1 & 0 & 1 & 0 & 1 & 0 & 1 & 1 & 1 & 1 & 0 & 0 & 1 & 0 & -2
\end{array}\right)
\]

\vfill{}

For the surface $Y$, a full rank minor of minimal discriminant is

\selectlanguage{english}%
\[
M_{Y}=\left(\begin{array}{rrrrrrrrrrrrrrrrrrrr}
-2 & 0 & 0 & 2 & 2 & 0 & 0 & 1 & 2 & 1 & 0 & 0 & 0 & 1 & 2 & 0 & 1 & 2 & 1 & 0\\
0 & -2 & 2 & 2 & 0 & 0 & 0 & 1 & 1 & 1 & 0 & 0 & 1 & 0 & 1 & 1 & 1 & 2 & 1 & 0\\
0 & 2 & -2 & 0 & 2 & 2 & 1 & 2 & 1 & 1 & 0 & 0 & 1 & 2 & 1 & 1 & 1 & 0 & 1 & 1\\
2 & 2 & 0 & -2 & 2 & 2 & 2 & 1 & 0 & 0 & 1 & 1 & 1 & 2 & 1 & 1 & 1 & 0 & 0 & 1\\
2 & 0 & 2 & 2 & -2 & 2 & 0 & 2 & 1 & 2 & 1 & 1 & 1 & 0 & 1 & 1 & 0 & 0 & 1 & 0\\
0 & 0 & 2 & 2 & 2 & -2 & 1 & 0 & 2 & 0 & 1 & 1 & 1 & 0 & 1 & 1 & 1 & 2 & 0 & 0\\
0 & 0 & 1 & 2 & 0 & 1 & -2 & 1 & 2 & 1 & 0 & 0 & 1 & 0 & 2 & 1 & 0 & 1 & 1 & 0\\
1 & 1 & 2 & 1 & 2 & 0 & 1 & -2 & 1 & 1 & 2 & 2 & 0 & 0 & 2 & 1 & 1 & 1 & 0 & 0\\
2 & 1 & 1 & 0 & 1 & 2 & 2 & 1 & -2 & 2 & 1 & 2 & 1 & 1 & 1 & 1 & 0 & 0 & 0 & 1\\
1 & 1 & 1 & 0 & 2 & 0 & 1 & 1 & 2 & -2 & 0 & 0 & 2 & 2 & 1 & 1 & 1 & 1 & 0 & 0\\
0 & 0 & 0 & 1 & 1 & 1 & 0 & 2 & 1 & 0 & -2 & 0 & 2 & 2 & 1 & 1 & 1 & 1 & 1 & 0\\
0 & 0 & 0 & 1 & 1 & 1 & 0 & 2 & 2 & 0 & 0 & -2 & 1 & 1 & 0 & 2 & 1 & 1 & 1 & 1\\
0 & 1 & 1 & 1 & 1 & 1 & 1 & 0 & 1 & 2 & 2 & 1 & -2 & 0 & 2 & 0 & 1 & 1 & 0 & 0\\
1 & 0 & 2 & 2 & 0 & 0 & 0 & 0 & 1 & 2 & 2 & 1 & 0 & -2 & 2 & 2 & 0 & 1 & 0 & 0\\
2 & 1 & 1 & 1 & 1 & 1 & 2 & 2 & 1 & 1 & 1 & 0 & 2 & 2 & -2 & 2 & 1 & 0 & 1 & 2\\
0 & 1 & 1 & 1 & 1 & 1 & 1 & 1 & 1 & 1 & 1 & 2 & 0 & 2 & 2 & -2 & 1 & 1 & 1 & 0\\
1 & 1 & 1 & 1 & 0 & 1 & 0 & 1 & 0 & 1 & 1 & 1 & 1 & 0 & 1 & 1 & -2 & 0 & 0 & 0\\
2 & 2 & 0 & 0 & 0 & 2 & 1 & 1 & 0 & 1 & 1 & 1 & 1 & 1 & 0 & 1 & 0 & -2 & 0 & 1\\
1 & 1 & 1 & 0 & 1 & 0 & 1 & 0 & 0 & 0 & 1 & 1 & 0 & 0 & 1 & 1 & 0 & 0 & -2 & 0\\
0 & 0 & 1 & 1 & 0 & 0 & 0 & 0 & 1 & 0 & 0 & 1 & 0 & 0 & 2 & 0 & 0 & 1 & 0 & -2
\end{array}\right)
\]

\selectlanguage{british}%
\bibliographystyle{amsalpha}
\bibliography{Articles_Ref}

\end{document}